\documentclass[11pt,reqno]{amsart}
\usepackage[margin=1in]{geometry}
\usepackage{amsmath,amssymb,amsthm,graphicx,amsxtra, setspace}
\usepackage[utf8]{inputenc}
\usepackage{mathrsfs}
\usepackage{hyperref}
\usepackage{upgreek}
\usepackage{mathtools}
\usepackage{hyperref}
\usepackage{alltt}
\usepackage{fouridx}
\usepackage{dsfont}
\usepackage{dsfont}
\usepackage{cancel}
\usepackage{xcolor}
\usepackage[mathcal]{euscript}
\usepackage{shadethm}
%\pagenumbering{gobble}
\usepackage{float}
\allowdisplaybreaks

\usepackage[pagewise]{lineno}

% THEOREM Environments ---------------------------------------------------
%\setlength{\textheight}{19.5 cm} \setlength{\textwidth}{12.5 cm}
\newtheorem{theorem}{Theorem}[section]
\newtheorem{lemma}[theorem]{Lemma}
\newtheorem{proposition}[theorem]{Proposition}

\newtheorem{definition}[theorem]{Definition}

\newtheorem{remark}[theorem]{Remark}
\newtheorem{hypothesis}[theorem]{Hypothesis}

\let\originalleft\left
\let\originalright\right
\renewcommand{\left}{\mathopen{}\mathclose\bgroup\originalleft}
\renewcommand{\right}{\aftergroup\egroup\originalright}

\hypersetup{colorlinks=true,%
	citecolor=red,%
	filecolor=blue,%
	linkcolor=blue,%
}
% ----------------------------------------------------------------
\usepackage{graphicx}

% MATH -----------------------------------------------------------
\newcommand{\Tr}{\mathop{\mathrm{Tr}}}
\renewcommand{\d}{\/\mathrm{d}\/}

\def\w{\textbf{W}^{\varepsilon}_{{\theta}^{\varepsilon}}}

\def\e{\varepsilon}

\def\t{t\wedge\tau_N}
\def\s{t\wedge\tau_N}

\def\T{T\wedge\tau_N}
\def\L{\mathbb{L}}
\def\A{\mathrm{A}}
\def\I{\mathrm{I}}

\def\C{\mathrm{C}}
\def\f{\boldsymbol{f}}

\def\B{\mathrm{B}}
\def\D{\mathrm{D}}
\def\y{\boldsymbol{y}}
\def\boldq{\boldsymbol{q}}
\def\bolde{\boldsymbol{e}}

\def\E{\mathbb{E}}
\def\X{\mathbb{X}}
\def\x{\boldsymbol{x}}

\def\bolde{\boldsymbol{e}}

\def\z{\boldsymbol{z}}
\def\v{\boldsymbol{v}}
\def\V{\mathbb{v}}
\def\w{\boldsymbol{w}}
\def\W{\mathrm{W}}
\def\G{\mathrm{G}}
\def\Q{\mathrm{Q}}

\def\N{\mathbb{N}}

\def\no{\nonumber}
\def\V{\mathbb{V}}
\def\wi{\widetilde}
\def\Q{\mathrm{Q}}
\def\u{\mathrm{U}}
\def\P{\mathrm{P}}
\def\u{\boldsymbol{u}}
\def\H{\mathbb{H}}
\def\n{\boldsymbol{n}}

\newcommand{\R}{\mathbb{R}}

\renewcommand{\d}{\/\mathrm{d}\/}

% ----------------------------------------------------------------

\newcommand{\Addresses}{{% additional braces for segregating \footnotesize
		\footnote{
			%	\footnotesize
			
			\noindent \textsuperscript{1}Center for Mathematics and Applications (NovaMath),  NOVA	SST,	Portugal.\par\nopagebreak
			\noindent  \textit{e-mail:} \texttt{kushkinra@gmail.com, k.kinra@fct.unl.pt.}
			
			\noindent \textsuperscript{2}Department of Mathematics, Indian Institute of Technology Roorkee-IIT Roorkee,
			Haridwar Highway, Roorkee, Uttarakhand 247667, INDIA.\par\nopagebreak
			\noindent  \textit{e-mail:} \texttt{manilfma@iitr.ac.in, maniltmohan@gmail.com.}
			
			\noindent \textsuperscript{*}Corresponding author.

			\textit{Key words:} convective Brinkman-Forchheimer equations, tamed Navier-Stokes equations, strong solution, exponential stability, invariant measure.
			
			Mathematics Subject Classification (2010): Primary 60H15; Secondary 35R60, 35Q30, 76D05.

}}}

\begin{document}
	
%	\linenumbers
	
	\title[Stochastic convective Brinkman-Forchheimer equations]{Stochastic convective Brinkman-Forchheimer equations on general Unbounded Domains
			\Addresses}
		\author[K. Kinra]{Kush Kinra\textsuperscript{1}}
	\author[M. T. Mohan ]{Manil T. Mohan\textsuperscript{2*}}

	\maketitle
	
	\begin{abstract}
	The stochastic convective Brinkman-Forchheimer (SCBF)  equations  in an open connected set $\mathcal{O}\subseteq\R^d$ ($d\in \{2,3,4\}$) or torus are considered in this work.  We show the existence of a pathwise unique strong solution (in the probabilistic sense) satisfying the energy equality (It\^o formula) to SCBF equations perturbed by multiplicative Gaussian noise.  We exploited a monotonicity property of the linear and nonlinear operators as well as a stochastic generalization of  the Minty-Browder technique in the proofs. The energy equality is obtained by approximating the solution using approximate functions constituting the elements of eigenspaces of a compact operator in such a way that the approximations are bounded and converge in both Sobolev and Lebesgue spaces simultaneously.   We further discuss the global in time regularity results of such strong solutions on the torus. The exponential stability results (in mean square and pathwise sense) for the stationary solutions  is also established in this work for large effective viscosity. Moreover, a stabilization result of the stochastic convective Brinkman-Forchheimer  equations by using a multiplicative noise is obtained. Finally, when $\mathcal{O}$ is a bounded domain, we establish the existence of a unique invariant measure for the SCBF equations with multiplicative Gaussian noise, which is both ergodic and strongly mixing, using the exponential stability of strong solutions.
	\end{abstract}

	\section{Introduction}\label{sec1}\setcounter{equation}{0}
	Let $\mathcal{O}\subseteq\R^d$ ($2\leq d\leq 4$) be an open connected set  with a smooth boundary $\partial\mathcal{O}$. The	convective Brinkman-Forchheimer (CBF)  equations  are given by (see \cite{KT2} for Brinkman-Forchheimer equations with fast growing nonlinearities)
\begin{equation}\label{1}
\left\{
	\begin{aligned}
	\frac{\partial \u}{\partial t}-\mu \Delta\u+(\u\cdot\nabla)\u+\alpha\u+\beta|\u|^{r-1}\u+\nabla p&=\boldsymbol{f}, \ \text{ in } \ \mathcal{O}\times(0,T), \\ \nabla\cdot\u&=0, \ \text{ in } \ \mathcal{O}\times(0,T), \\
	\u&=\mathbf{0}\ \text{ on } \ \partial\mathcal{O}\times[0,T], \\
	\u(0)&=\u_0 \ \text{ in } \ \mathcal{O},\\
	\int_{\mathcal{O}}p(x,t)\d x&=0, \ \text{ in } \ (0,T).
	\end{aligned}
	\right.
	\end{equation}
Here, $\u(t , x) \in \R^d$ represents the velocity field at time $t$ and position $x$, $p(t,x)\in\R$ denotes the pressure field, $\f(t,x)\in\R^d$ is an external forcing. The final condition in \eqref{1} is imposed for the uniqueness of the pressure $p$. The constant $\mu$ represents the positive Brinkman coefficient (effective viscosity), the positive constants $\alpha$ and $\beta$ represent the Darcy (permeability of porous medium) and Forchheimer (proportional to the porosity of the material) coefficients, respectively. The absorption exponent $r\in[1,\infty)$ and  $r=3$ is known as the critical exponent.  The CBF equations \eqref{1} describe the motion of incompressible fluid flows in a saturated porous medium (see \cite{PAM}).   The model given in \eqref{1} is recognized to be more accurate when the flow velocity is too large for the Darcy's law to be valid alone, and apart from that, the porosity is not too small, so that we use the term \emph{non-Darcy models} for these types of flows (cf. \cite{PAM}). The nonlineartiy of the form $|\u|^{r-1}\u$ can be found in tidal dynamics as well as non-Newtonian fluid flows (see \cite{CTA,MTM5,MTM3}, etc and the references therein).   For $\alpha=\beta=0$, we obtain the classical $d$-dimensional  Navier-Stokes equations (NSE) (for example, see \cite{PCCF,FMRT,GGP,OAL,JCRWS,Te,Te1}, etc).   It has been established in Proposition 1.1, \cite{KWH}  that the critical homogeneous CBF equations have the same scaling as NSE only when $\alpha=0$ and no scale invariance property for other values of $\alpha$ and $r$. If $\mathcal{O}=\mathbb{R}^d$, then one has to replace   the boundary condition 	$\u=\mathbf{0}\text{ on } \partial\mathcal{O}\times[0,T]$ by the decay at infinity condition $|\u(x,t)|\to 0$ as $|x|\to\infty$ for all $t\in[0,T]$.

Let us now discuss some of the solvability results available in the literature for 3D CBF and related equations in the whole space as well as on the torus. The authors in \cite{ZCQJ} showed that the Cauchy problem for NSE with damping $r|\u|^{r-1}\u$ in the whole space has global weak solutions, for any $r\geq 1$, global strong solutions, for any $r\geq 7/2$ and that the strong solution is unique, for any $7/2\leq r\leq 5$. An improvement to this result was made in \cite{ZZXW} and the authors showed that the above mentioned problem has global strong solutions, for any $r>3$ and the strong solution is unique, when $3<r\leq 5$. Later, the authors in \cite{YZ}  proved that the strong solution exists globally for $r\geq 3$, and they established two regularity criteria, for $1\leq r<3$. For any $r\geq 1$, they proved that the strong solution is unique even among weak solutions.  The existence and uniqueness of a smooth solution to tamed 3D NSE in the whole space is proved in  \cite{MRXZ}.    A simple proof of the existence of global-in-time smooth solutions for the CBF equations \eqref{1} with $r>3$ on a 3D periodic domain is obtained in \cite{KWH}. The authors also proved that  global, regular solutions exist also for the critical value $r=3$, provided that the coefficients satisfy the relation $4\beta\mu\geq 1$. Furthermore, they showed that in the critical case every weak solution verifies the energy equality and hence is continuous into the phase space $\L^2$. The authors in \cite{KWH1} showed that the strong solutions of three dimensional CBF  equations in periodic domains with  the absorption exponent $r\in[1,3]$ remain strong under small enough changes of initial condition and forcing function. 

The authors in \cite{SNA} considered NSE modified by the absorption term $|\u|^{r-2}\u$, for $r>2$ in bounded domains with compact boundary. For this problem, they proved the existence of weak solutions in the Leray-Hopf sense, for any dimension $d\geq 2$ and its uniqueness for $d=2$. But  in three dimensions, they were not able to establish the energy equality satisfied by the weak solutions. The existence of regular dissipative solutions and global attractors for the system \eqref{1} with $r> 3$ (for $\f\in\mathrm{H}^1(0,T;\H)$) is established in \cite{KT2}. As a global smooth solution exists for $r>3$, the energy equality is satisfied in bounded domains. Recently, the authors in \cite{CLF} were able to construct functions that can approximate functions defined on smooth bounded domains by elements of eigenspaces of linear operators (for example, the Laplacian or Stokes operator) in such a way that the approximations are bounded and converge in both Sobolev and Lebesgue spaces simultaneously. As a simple application of this result, they proved that all weak solutions of the critical CBF equations ($r=3$) posed on a bounded domain in $\mathbb{R}^3$ satisfy the energy equality.  The existence and uniqueness of a global weak solution in the Leray-Hopf sense satisfying the energy equality to the system \eqref{1}	(for all $\beta,\mu>0$ for $r>3$ and $2\beta\mu\geq 1$ for $r=3$) is proved in \cite{Gautam+Mohan_2025}. The author exploited global monotonicity property of the linear and nonlinear operators, and Minty-Browder techniques in the proofs.

Let us now discuss some results available in the literature for the stochastic counterpart of the system \eqref{1} and related models in the whole space or on a torus. The existence of a unique strong solution 
\begin{align}\label{2}
\u\in\mathrm{L}^2(\Omega;\mathrm{L}^{\infty}(0,T;\H^1(\mathcal{O})))\cap\mathrm{L}^2(0,T;\H^2(\mathcal{O})),
\end{align} with $\mathbb{P}$-a.s., continuous paths in $\C([0,T];\H^1(\mathcal{O})),$  for $\u_0\in\mathrm{L}^2(\Omega;\H^1(\mathcal{O}))$,  to stochastic tamed 3D NSE in the whole space as well as in the periodic boundary case is obtained in \cite{MRXZ1}. They have proved the existence of a  unique  invariant measure for the corresponding transition semigroup also.  Recently, \cite{ZBGD} improved their results  for a slightly simplified system. The authors in \cite{WLMR} established the local and global existence and uniqueness of solutions for general deterministic and stochastic nonlinear evolution equations with coefficients satisfying some local monotonicity and generalized coercivity conditions. In \cite{WL}, the author showed the existence and uniqueness of strong solutions for a large class of SPDE, where the coefficients satisfy the local monotonicity and Lyapunov condition,  and he provided the stochastic tamed 3D NSE as an example. A large deviation principle of Freidlin-Wentzell type for stochastic tamed 3D NSE driven by multiplicative noise  in the whole space or on a torus  is established in \cite{MRTZ1}. All these works established the existence and uniqueness of strong solutions in the regularity class given in \eqref{2}. The global solvability of 3D NSE in the whole space with a Brinkman-Forchheimer type term subject to an anisotropic viscosity and a random perturbation of multiplicative type is described in the work \cite{HBAM}. 

Unlike whole space or periodic domains, in bounded domains, there is a technical difficulty in obtaining the regularity given in \eqref{2} of the velocity field $\u(\cdot)$ appearing in \eqref{1} with the fording $\f\in\mathrm{L}^2(0,T;\H)$.   It is  mentioned in the paper \cite{KT2} that the major difficulty in working with bounded domains is that $\mathcal{P}(|\u|^{r-1}\u)$ ($\mathcal{P}:\L^2(\mathcal{O})\to\H$ is the Helmholtz-Hodge orthogonal projection) need not be zero on the boundary, and $\mathcal{P}$ and $-\Delta$ are not necessarily commuting (see Example 2.19, \cite{JCR4}). Moreover, $\Delta\u\cdot\n\big|_{\partial\mathcal{O}}\neq 0$ in general and the term with pressure will not disappear (see \cite{KT2}), while taking inner product with $\Delta\u$ to the first equation in \eqref{1}. Therefore, the equality 
\begin{align}\label{3}
&\int_{\mathcal{O}}(-\Delta\u(x))\cdot|\u(x)|^{r-1}\u(x)\d x\nonumber\\&=\int_{\mathcal{O}}|\nabla\u(x)|^2|\u(x)|^{r-1}\d x+4\left[\frac{r-1}{(r+1)^2}\right]\int_{\mathcal{O}}|\nabla|\u(x)|^{\frac{r+1}{2}}|^2\d x\nonumber\\&=\int_{\mathcal{O}}|\nabla\u(x)|^2|\u(x)|^{r-1}\d x+\frac{r-1}{4}\int_{\mathcal{O}}|\u(x)|^{r-3}|\nabla|\u(x)|^2|^2\d x,
\end{align}
may not be useful in the context of bounded domains. The authors in \cite{MRTZ} showed the existence and uniqueness of strong solutions to stochastic 3D tamed NSE on bounded domains with Dirichlet boundary conditions. They have proved a small time large deviation principle for the solution also. The author in \cite{BYo} proved the existence of a random attractor for the 3D damped NSE in bounded domains with additive noise by verifying the pullback flattening property. The existence of a random attractor  ($r>3$, for any $\beta>0$) as well as the existence of a unique invariant measure ($3<r\leq 5$, for any $\beta>0$ and $\beta\geq\frac{1}{2}$ for $r=3$) for stochastic 3D NSE with damping driven by a multiplicative noise is established in the paper \cite{LHGH}. By using the classical Faedo-Galerkin approximation and compactness method, the existence of martingale solutions for stochastic 3D NSE with nonlinear damping is obtained in \cite{LHGH1}.  The works \cite{ZDRZ,LHGH2}, etc. considered various stochastic problems related to the equations similar to stochastic CBF equations in bounded domains with Dirichlet boundary conditions. As far as the strong solutions are concerned, some of these works proved regularity results in the space given in \eqref{2} by using the estimate given in \eqref{3}, which may not hold true in general domains. The well-posedness and asymptotic behavior of the stochastic CBF equations on bounded domains under pure jump noise perturbations are studied in \cite{Mohan_2022}.

In this work, we consider the following stochastic convective Brinkman-Forchheimer (SCBF) equations perturbed by multiplicative Gaussian noise: 
\begin{equation}\label{31}
	\left\{
	\begin{aligned}
		\d\u(t)&= [\mu \Delta\u(t) - (\u(t)\cdot\nabla)\u(t)-\beta|\u(t)|^{r-1}\u(t)-\nabla p(t)]\d t\\&\quad +\Phi(t,\u(t))\d\W(t), \ \text{ in } \ \mathcal{O}\times[0,T], \\ \nabla\cdot\u(t)&=0, \ \text{ in } \ \mathcal{O}\times(0,T), \\
		\u(t)&=\mathbf{0}\ \text{ on } \ \partial\mathcal{O}\times[0,T], \\
		\u(0)&=\u_0 \ \text{ in } \ \mathcal{O},
	\end{aligned}
	\right.
\end{equation} 
where $\W(\cdot)$ is a Hilbert space valued Wiener process and the function $\Phi:[0,T]\times(\H_0^1(\mathcal{O})\cap\L^{r+1}(\mathcal{O}))\to\mathcal{L}_{\Q}(\H)$ (see Section \ref{sec4} for details). We show the existence and uniqueness of strong solutions in the probabilistic sense in a larger space than \eqref{2} and discuss some asymptotic behavior. The main motivation of this work comes from the papers \cite{KWH} and \cite{CLF}. The work \cite{KWH} helped us to construct functions that can approximate functions defined on smooth bounded domains  by elements of eigenspaces of Stokes operator in such a way that the approximations are bounded and converge in both Sobolev and Lebesgue spaces simultaneously.  On the $d$-dimensional torus, one can approximate functions in $\L^p$-spaces using truncated Fourier expansions (see \cite{CLF}).   We also got inspiration from the work \cite{GK2}, where the	It\^o formula for processes taking values in intersection	of finitely many Banach spaces has been established. Because of the technical challenges mentioned above, alternative approaches may be required to establish the regularity of $\u(\cdot)$ in \eqref{2} for domains other than the whole space or torus. The novelties of this work are: 
\begin{itemize}
	\item The existence and uniqueness of strong solutions to SCBF equations defined on general unbounded domains ($r> 3,$ for any $\mu$ and $\beta$, $r=3$ for $2\beta\mu\geq 1$) with the divergence-free $\u_0\in\mathrm{L}^{4+\eta}(\Omega;\L^2(\mathcal{O}))$, for some $\eta>0$ is obtained in the space
	\begin{center} $\mathrm{L}^{4+\eta}(\Omega;\mathrm{L}^{\infty}(0,T;\L^2(\mathcal{O})))\cap\mathrm{L}^{2}(\Omega;\mathrm{L}^2(0,T;\H_0^1(\mathcal{O})))\cap\mathrm{L}^{r+1}(\Omega;\mathrm{L}^{r+1}(0,T;\L^{r+1}(\mathcal{O}))),$\end{center} with $\mathbb{P}$-a.s. paths in $\C([0,T];\L^2(\mathcal{O}))$. 
	\item The energy equality (It\^o's formula) satisfied by the SCBF equations is established by approximating the strong solution using the finite-dimensional space spanned by the first $n$ eigenfunctions of a suitable compact operator.
	\item The exponential stability (in the mean square and almost sure sense) of the stationary solutions is obtained for large $\mu$ and the lower bound on $\mu$ does not depend on the stationary solutions. A stabilization result of SCBF  equations by using a multiplicative noise is also obtained.
	\item  On bounded domains, the existence of a unique ergodic and strongly mixing invariant measure for SCBF  equations perturbed by multiplicative Gaussian noise is established by using the exponential stability of strong solutions. The case of unbounded domains will be addressed in our future work (see Remark \ref{rem-unbounded} below).
\end{itemize}

The organization of the paper is as follows. In the next section, we define the linear and nonlinear operators, and provide the necessary function spaces needed to obtain the global solvability results of the system \eqref{1}. For $r>3$, we show that the sum of linear and nonlinear operators is monotone (Theorem \ref{thm2.2}),  and for $r=3$ and $2\beta\mu\geq 1$, we show that the  sum is globally monotone (Theorem \ref{thm2.3}). The demicontinuity and hence the hemicontinuity property of these operators is also obtained in the same section (Lemma \ref{lem2.8}). Moreover, we also provide a compact operator in the same section which helps us to obtain the energy equality.  An abstract formulation of the SCBF equations \eqref{31} is formulated in Section \ref{sec4}. We establish the existence and uniqueness of global strong solution by making use of the monotonicity property of the linear and nonlinear operators as well as a stochastic generalization of the Minty-Browder technique (see Proposition \ref{prop-EE2} for a-priori energy estimates and Theorem \ref{exis} for global solvability results). We overcame the major difficulty of establishing the energy equality for SCBF equations by approximating the solution by using the finite-dimensional space spanned by the first $n$ eigenfunctions of the compact operator mentioned above. Owing to the technical difficulties discussed earlier, we establish the regularity results for global strong solutions only on the torus, assuming smooth initial data and imposing additional conditions on the noise coefficient (Theorem \ref{thm3.10}). The Section \ref{se5} is devoted for establishing the exponential stability (in the mean square and almost sure sense) of the stationary solutions (Theorems \ref{exp1} and \ref{exp2}) for large $\mu$. In both Theorems, the lower bound of $\mu$ does not depend on the stationary solutions.  A stabilization result for SCBF equations by using a multiplicative noise is also obtained in the same section (Theorem \ref{thm4.7}). In the final section, we prove the existence of a unique ergodic and strongly mixing invariant measure for SCBF  equations defined on bounded domains by using the exponential stability of strong solutions  (Theorem \ref{UEIM}).

\section{Mathematical Formulation}\label{sec3}\setcounter{equation}{0}
The necessary function spaces needed to obtain the global solvability results of the system \eqref{1} is provided in this section. We prove monotonicity as well as hemicontinuity properties of the linear and nonlinear operators in the same section. %In our analysis, the parameter $\alpha$ does not play a major role and we set $\alpha$ to be zero in \eqref{1} in the entire paper.

\subsection{Function spaces}\label{Function-spaces} 
Let $\C_0^{\infty}(\mathcal{O};\R^d)$ be the space of all infinitely differentiable functions  ($\R^d$-valued) with compact support in $\mathcal{O}\subset\R^d$.  Let us define 
\begin{align*} 
	\mathcal{V}&:=\{\u\in\C_0^{\infty}(\mathcal{O},\R^d):\nabla\cdot\u=0\},\\
	\mathbb{H}&:=\text{the closure of }\ \mathcal{V} \ \text{ in the Lebesgue space } \L^2(\mathcal{O})=\mathrm{L}^2(\mathcal{O};\R^d),\\
	\mathbb{V}&:=\text{the closure of }\ \mathcal{V} \ \text{ in the Sobolev space } \H_0^1(\mathcal{O})=\mathrm{H}_0^1(\mathcal{O};\R^d),\\
	\widetilde{\L}^{p}&:=\text{the closure of }\ \mathcal{V} \ \text{ in the Lebesgue space } \L^p(\mathcal{O})=\mathrm{L}^p(\mathcal{O};\R^d),
\end{align*}
for $p\in(2,\infty)$. The space of divergence-free test functions in the space-time domain is defined by 
\begin{align}\label{dvtest}
	\mathcal{V}_T:=\left\{\v\in\C_0^{\infty}(\mathcal{O}\times[0,T);\R^d):\nabla\cdot\v(\cdot,t)=0\right\}.
\end{align} 
It should be noted that $\v(x,T)=0$, for all $\v\in\mathcal{V}_T$. Then under some smoothness assumptions on the boundary, we characterize the spaces $\H$, $\V$ and $\widetilde{\L}^p$ as 
$
\H=\{\u\in\L^2(\mathcal{O}):\nabla\cdot\u=0,\u\cdot\mathbf{n}\big|_{\partial\mathcal{O}}=0\}$,  with norm  $\|\u\|_{\H}^2:=\int_{\mathcal{O}}|\u(x)|^2\d x,
$
where $\mathbf{n}$ is the outward normal to $\partial\mathcal{O}$,
$
\V=\{\u\in\H_0^1(\mathcal{O}):\nabla\cdot\u=0\},$  with norm $ \|\u\|_{\V}^2:=\int_{\mathcal{O}}|\nabla\u(x)|^2\d x,
$ and $\widetilde{\L}^p=\{\u\in\L^p(\mathcal{O}):\nabla\cdot\u=0, \u\cdot\mathbf{n}\big|_{\partial\mathcal{O}}=0\},$ with norm $\|\u\|_{\widetilde{\L}^p}^p=\int_{\mathcal{O}}|\u(x)|^p\d x$, respectively.
Let $(\cdot,\cdot)$ denote the inner product in the Hilbert space $\H$ and $\langle \cdot,\cdot\rangle $ denotes the induced duality between the spaces $\V$  and its dual $\V'$ as well as $\widetilde{\L}^p$ and its dual $\widetilde{\L}^{p'}$, where $\frac{1}{p}+\frac{1}{p'}=1$. Note that $\H$ can be identified with its dual $\H'$. From \cite[Subsection 2.1]{FKS}, we have that the sum space $\V'+\widetilde{\L}^{p'}$ is well defined and  is a Banach space with respect to the norm 
\begin{align}\label{22}
	\|\u\|_{\V'+\widetilde{\L}^{p'}}&:=\inf\{\|\u_1\|_{\V'}+\|\u_2\|_{\wi\L^{p'}}:\u=\u_1+\u_2, \y_1\in\V' \ \text{and} \ \y_2\in\wi\L^{p'}\}\nonumber\\&=
	\sup\left\{\frac{|\langle\u_1+\u_2,\f\rangle|}{\|\f\|_{\V\cap\widetilde{\L}^p}}:\boldsymbol{0}\neq\f\in\V\cap\widetilde{\L}^p\right\},
\end{align}
where $\|\cdot\|_{\V\cap\widetilde{\L}^p}:=\max\{\|\cdot\|_{\V}, \|\cdot\|_{\wi\L^p}\}$ is a norm on the Banach space $\V\cap\widetilde{\L}^p$. Also the norm $\max\{\|\u\|_{\V}, \|\u\|_{\wi\L^p}\}$ is equivalent to the norms  $\|\u\|_{\V}+\|\u\|_{\widetilde{\L}^{p}}$ and $\sqrt{\|\u\|_{\V}^2+\|\u\|_{\widetilde{\L}^{p}}^2}$ on the space $\V\cap\widetilde{\L}^p$. Moreover, we have the continuous embeddings $$\V\cap\widetilde{\L}^p\hookrightarrow\V\hookrightarrow\H\cong\H'\hookrightarrow\V'\hookrightarrow\V'+\widetilde{\L}^{p'}.$$ 
%where the embedding $\V\hookrightarrow\H$ is compact. 
We denote $\H^2(\mathcal{O})=\W^{2,2}(\mathcal{O};\R^d)$ for the second order Hilbertian-Sobolev spaces. For the functional set up on the torus, interested readers are referred to see \cite{KWH,Te1}, etc. 

%The following Gr\"onwall inequality is used in the sequel and has been taken from \cite{JRRY}.
%\begin{lemma}[Gr\"onwall inequality]\label{Gronw}
%	Suppose ${y},{f},{f}_1$ and ${f}_2$ are four non-negative locally integrable functions on $[t_0,\infty)$, $t_0>0$, satisfying
%	\begin{align*}
%		{y}(t)+\int_{t_0}^t{f}(s)\d s\leq {a}+\int_{t_0}^t {f}_1(s)\d s+\int_{t_0}^{t} {f}_2(s){y}(s)\d s, \ \text{ for all } \ t\in[t_0,\infty),
%	\end{align*} 
%	where ${a}$ is some non-negative constant. Then, 
%	\begin{align*}
%		{y}(t)+\int_{t_0}^t{f}(s)\d s\leq 
%		\left({a}+\int_{t_0}^t {f}_1(s)\d s\right)\exp\left(\int_{t_0}^{t} {f}_2(s)\d s\right),
%	\end{align*}
%	for all $t\in[t_0,\infty)$.
%\end{lemma}
%
%The following Gr\"onwall inequality is a nonlinear generalization of Lemma \ref{Gronw} and has been taken from \cite[Theorem 21, Chapter 1, pp. 11]{SSD}.
%\begin{lemma}[Gr\"onwall inequality: A nonlinear generalization]\label{lem-non-gro}
%	Let $y$ be a non-negative function that satisfies the following  integral inequality:
%	\begin{align}
%		y(t)\leq c+\int_{t_0}^t(a(s)y(s)+b(s)y^{\alpha}(s))\d s,  \ \text{ for all } \ t\in[t_0,\infty), 
%	\end{align}
%	where $c$ is some non-negative constant, $\ \alpha\in[0,1)$ and, $a(t)$ and $b(t)$ are  non-negative locally integrable  functions in $[t_0,\infty)$. Then, we have 
%	\begin{align}
%		y(t)\leq & \left\{c^{1-\alpha}\exp\left[(1-\alpha)\int_{t_0}^ta(s)\d s\right]\right.\nonumber\\&\left.\quad+(1-\alpha)\int_{t_0}^tb(s)\exp\left[(1-\alpha)\int_s^ta(r)\d r\right]\d s\right\}^{\frac{1}{1-\alpha}},
%	\end{align}
%	for all $t\in[t_0,\infty)$.
%\end{lemma}

 The  following  interpolation inequality is also frequently in the paper. 
 \begin{lemma}
 	Assume $1\leq s\leq r\leq t\leq \infty$, $\theta\in(0,1)$ such that $\frac{1}{r}=\frac{\theta}{s}+\frac{1-\theta}{t}$ and $\u\in\L^s(\mathcal{O})\cap\L^t(\mathcal{O})$, then we have 
 	\begin{align}\label{211}
 		\|\u\|_{\L^r}\leq\|\u\|_{\L^s}^{\theta}\|\u\|_{\L^t}^{1-\theta}. 
 	\end{align}
 \end{lemma}

\subsection{Linear operator}\label{opeA}
It is well-known from \cite{DFHM,HKTY} that every vector field $\u\in\mathbb{L}^p(\mathcal{O})$, for $1<p<\infty$ can be uniquely represented as $\u=\v+\nabla q,$ where $\v\in\mathbb{L}^p(\mathcal{O})$ with $\mathrm{div \ }\v=0$ in the sense of distributions in $\mathcal{O}$ with $\v\cdot\n=0$ on $\partial\mathcal{O}$ (in the sense of trace) and $q\in\mathrm{W}^{1,p}(\mathcal{O})$ (Helmholtz-Weyl or Helmholtz-Hodge decomposition). For smooth vector fields in $\mathcal{O}$, such a decomposition is an orthogonal sum in $\mathbb{L}^2(\mathcal{O})$. Note that $\u=\v+\nabla q$ holds for all $\u\in\mathbb{L}^p(\mathcal{O})$, so that we can define the projection operator $\mathcal{P}_p$  by $\mathcal{P}_p\u = \v$. Let us consider the set $\mathcal{E}_{p}(\mathcal{O}):=\left\{\nabla q:q\in \mathrm{W}^{1,p}({\mathcal{O}})\right\}$ equipped with the norm $\|\nabla q\|_{\L^p}$. Then, from the above discussion, we obtain $\L^p(\mathcal{O})=\wi\L^p(\mathcal{O})\oplus\mathcal{E}_{p}(\mathcal{O})$.   From  \cite[Theorem 1.4]{CSHS}, we further have 
\begin{align*}
	\|\nabla q\|_{\L^p}\leq C\|\u\|_{\L^p}, \ \|\v\|_{\L^q}\leq (C+1)\|\u\|_{\L^p}\ \text{ and }\ \|\nabla q\|_{\L^p}+\|\v\|_{\L^q}\leq (2C+1)\|\u\|_{\L^p},
\end{align*}
where $C=C(\mathcal{O},p)>0$ is a constant such that \begin{align}\label{23}
	\|\nabla q\|_{\L^p}\leq C\sup_{0\neq\nabla\varphi\in \mathcal{E}_{p'}(\mathcal{O})}\frac{|\langle\nabla q,\nabla\varphi\rangle|}{\|\nabla\varphi\|_{\L^{p'}}}, \ \text{ for all }\ \nabla q\in \mathcal{E}_{p}(\mathcal{O}),
\end{align} 
with $\frac{1}{p}+\frac{1}{p'}=1$. Setting $\mathcal{P}_p\u:=\v$, we obtain a bounded linear operator $\mathcal{P}_p:\L^p(\mathcal{O})\to\wi\L^{p}(\mathcal{O})$ such that $\mathcal{P}_p^2=\mathcal{P}_p$ (projection). For $p=2$, $\mathcal{P}:=\mathcal{P}_2:\L^2(\mathcal{O})\to\H$ is an orthogonal projection.   Since $\mathcal{O}$ is of class $\C^2$, from \cite[Remark 1.6, Chapter 1, pp. 18]{Te}, we also infer that $\mathcal{P}$ maps $\H^1(\mathcal{O})$ into itself and is continuous for the norm of $\H^1(\mathcal{O})$.	
% Let $\mathcal{P} : \L^2(\mathcal{O}) \to\H$ denote the \emph{Helmholtz-Hodge orthogonal projection} (see \cite{OAL,AC}). 
We define
\begin{equation*}
	\left\{
	\begin{aligned}
		\A\u:&=-\mathcal{P}\Delta\u,\;\u\in\D(\A),\\ \D(\A):&=\V\cap\H^{2}(\mathcal{O}).
	\end{aligned}
	\right.
\end{equation*}
It can be easily seen that the operator $\A$ is a non-negative self-adjoint operator in $\H$ with $\V=\D(\A^{1/2})$ and 
\begin{align}\label{2.7a}
	\langle \A\u,\u\rangle =\|\nabla\u\|_{\H}^2,\ \textrm{ for all }\ \u\in\V, \ \text{ so that }\ \|\A\u\|_{\V'}\leq \|\u\|_{\V}.
\end{align}

\begin{remark}
	For the bounded domain $\mathcal{O}$, the operator $\A$ is invertible and its inverse $\A^{-1}$ is bounded, self-adjoint and compact in $\H$. Thus, using spectral theorem, the spectrum of $\A$ consists of an infinite sequence $0< \lambda_1\leq \lambda_2\leq\ldots\leq \lambda_k\leq \ldots,$ with $\lambda_k\to\infty$ as $k\to\infty$ of eigenvalues.  Moreover, there exists an orthogonal basis $\{\boldsymbol{w}_k\}_{k=1}^{\infty} $ of $\H$ consisting of eigen functions of $\A$ such that $\A \boldsymbol{w}_k =\lambda_k\boldsymbol{w}_k$,  for all $ k\in\mathbb{N}$.  We know that $\u$ can be expressed as $\u=\sum_{k=1}^{\infty}\langle\u,\boldsymbol{w}_k\rangle \boldsymbol{w}_k$ and $\A\u=\sum_{k=1}^{\infty}\lambda_k\langle\u,\boldsymbol{w}_k\rangle \boldsymbol{w}_k$. Thus, it is immediate that 
	\begin{align}\label{poin}
		\|\nabla\u\|_{\mathbb{H}}^2=\langle \A\u,\u\rangle =\sum_{k=1}^{\infty}\lambda_k|\langle \u,\boldsymbol{w}_k\rangle|^2\geq \lambda_1\sum_{k=1}^{\infty}|\langle\u,\boldsymbol{w}_k\rangle|^2=\lambda_1\|\u\|_{\mathbb{H}}^2.
	\end{align}
\end{remark}

\begin{remark}
	It should be noted that, while proving global weak solutions, we are not using the Gagliardo-Nirenberg, Ladyzhenskaya or Agmon inequalities. The results obtained in this work are true for $2\leq d\leq 4$ in any smooth domains (for more details see step (iii) of the proof of the Theorem \ref{exis}). 
\end{remark}

\subsection{Bilinear operator}\label{opeB}
Let us define the \emph{trilinear form} $b(\cdot,\cdot,\cdot):\V\times\V\times\V\to\R$ by $$b(\u,\v,\w)=\int_{\mathcal{O}}(\u(x)\cdot\nabla)\v(x)\cdot\w(x)\d x=\sum_{i,j=1}^d\int_{\mathcal{O}}u_i(x)\frac{\partial v_j(x)}{\partial x_i}w_j(x)\d x.$$ If $\u, \v$ are such that the linear map $b(\u, \v, \cdot) $ is continuous on $\V$, the corresponding element of $\V'$ is denoted by $\B(\u, \v)$. We also denote (with an abuse of notation) $\B(\u) = \B(\u, \u)=\mathcal{P}(\u\cdot\nabla)\u$.
An integration by parts gives 
\begin{equation}\label{b0}
	\left\{
	\begin{aligned}
		b(\u,\v,\v) &= 0,\text{ for all }\u,\v \in\V,\\
		b(\u,\v,\w) &=  -b(\u,\w,\v),\text{ for all }\u,\v,\w\in \V.
	\end{aligned}
	\right.\end{equation}
In the trilinear form, an application of H\"older's inequality yields
\begin{align*}
	|b(\u,\v,\w)|=|b(\u,\w,\v)|\leq \|\u\|_{\widetilde{\L}^{r+1}}\|\v\|_{\widetilde{\L}^{\frac{2(r+1)}{r-1}}}\|\w\|_{\V},
\end{align*}
for all $\u\in\V\cap\widetilde{\L}^{r+1}$, $\v\in\V\cap\widetilde{\L}^{\frac{2(r+1)}{r-1}}$ and $\w\in\V$, so that we get 
\begin{align}\label{2p9}
	\|\B(\u,\v)\|_{\V'}\leq \|\u\|_{\widetilde{\L}^{r+1}}\|\v\|_{\widetilde{\L}^{\frac{2(r+1)}{r-1}}}.
\end{align}
Hence, the trilinear map $b : \V\times\V\times\V\to \R$ has a unique extension to a bounded trilinear map from $(\V\cap\widetilde{\L}^{r+1})\times(\V\cap\widetilde{\L}^{\frac{2(r+1)}{r-1}})\times\V$ to $\R$. It can also be seen that $\B$ maps $ \V\cap\widetilde{\L}^{r+1}$  into $\V'+\widetilde{\L}^{\frac{r+1}{r}}$ and using interpolation inequality (see \eqref{211}), we get 
\begin{align}\label{212}
	\left|\langle \B(\u,\u),\v\rangle \right|=\left|b(\u,\v,\u)\right|\leq \|\u\|_{\widetilde{\L}^{r+1}}\|\u\|_{\widetilde{\L}^{\frac{2(r+1)}{r-1}}}\|\v\|_{\V}\leq\|\u\|_{\widetilde{\L}^{r+1}}^{\frac{r+1}{r-1}}\|\u\|_{\H}^{\frac{r-3}{r-1}}\|\v\|_{\V},
\end{align}
for all $\v\in\V\cap\widetilde{\L}^{r+1}$. Thus, we have 
\begin{align}\label{2.9a}
	\|\B(\u)\|_{\V'+\widetilde{\L}^{\frac{r+1}{r}}}\leq\|\u\|_{\widetilde{\L}^{r+1}}^{\frac{r+1}{r-1}}\|\u\|_{\H}^{\frac{r-3}{r-1}}.
\end{align}
Using \eqref{2p9}, for $\u,\v\in\V\cap\widetilde{\L}^{r+1}$, we also have 
\begin{align}\label{lip}
	\|\B(\u)-\B(\v)\|_{\V'+\widetilde{\L}^{\frac{r+1}{r}}}&\leq \|\B(\u-\v,\u)\|_{\V'}+\|\B(\v,\u-\v)\|_{\V'}\nonumber\\&\leq \left(\|\u\|_{\widetilde{\L}^{\frac{2(r+1)}{r-1}}}+\|\v\|_{\widetilde{\L}^{\frac{2(r+1)}{r-1}}}\right)\|\u-\v\|_{\widetilde{\L}^{r+1}}\nonumber\\&\leq \left(\|\u\|_{\H}^{\frac{r-3}{r-1}}\|\u\|_{\widetilde{\L}^{r+1}}^{\frac{2}{r-1}}+\|\v\|_{\H}^{\frac{r-3}{r-1}}\|\v\|_{\widetilde{\L}^{r+1}}^{\frac{2}{r-1}}\right)\|\u-\v\|_{\widetilde{\L}^{r+1}},
\end{align}
for $r>3$, by using the interpolation inequality. For $r=3$, a calculation similar to \eqref{lip} yields 
\begin{align}
	\|\B(\u)-\B(\v)\|_{\V'+\widetilde{\L}^{\frac{4}{3}}}&\leq \left(\|\u\|_{\widetilde{\L}^{4}}+\|\v\|_{\widetilde{\L}^{4}}\right)\|\u-\v\|_{\widetilde{\L}^{4}},
\end{align}
hence $\B(\cdot):\V\cap\widetilde{\L}^{4}\to\V'+\widetilde{\L}^{\frac{4}{3}}$ is a locally Lipschitz operator.

\subsection{Nonlinear operator}\label{opeC}
Let us now define an operator $\mathcal{C}_r(\u):=\mathcal{P}(|\u|^{r-1}\u)$, for $\u\in \V\cap\wi\L^{r+1}$. For convenience of notation, we use $\mathcal{C}$ for $\mathcal{C}_r$ in the rest of the paper. Since the projection $\mathcal{P}$ is bounded from $\H^1$ into itself (see \cite[Remark 1.6]{Te}), the map $\mathcal{C}(\cdot):\V\cap\widetilde{\L}^{r+1}\to \V^{\prime} + \widetilde{\L}^{\frac{r+1}{r}}$ is well-defined and we have  $\langle\mathcal{C}(\u),\u\rangle =\|\u\|_{\widetilde{\L}^{r+1}}^{r+1}$. In addition, for all $\u\in \V\cap\wi\L^{r+1}$, the map $\mathcal{C}(\cdot):\V\cap\widetilde{\L}^{r+1}\to \V^{\prime} + \widetilde{\L}^{\frac{r+1}{r}}$ is G\^ateaux differentiable with its G\^ateaux derivative 
\begin{align}\label{Gaetu}
	\mathcal{C}'(\u)\v&=\left\{\begin{array}{cl}\mathcal{P}(\v),&\text{ for }r=1,\\ \left\{\begin{array}{cc}\mathcal{P}(|\u|^{r-1}\v)+(r-1)\mathcal{P}\left(\frac{\u}{|\u|^{3-r}}(\u\cdot\v)\right),&\text{ if }\u\neq \mathbf{0},\\\mathbf{0},&\text{ if }\u=\mathbf{0},\end{array}\right.&\text{ for } 1<r<3,\\ \mathcal{P}(|\u|^{r-1}\v)+(r-1)\mathcal{P}(\u|\u|^{r-3}(\u\cdot\v)), &\text{ for }r\geq 3,\end{array}\right.
\end{align}
for all $\v\in \widetilde{\L}^{r+1}$. An application of the Mean Value Theorem and H\"older's inequality, we estimate (see \cite[Subsection 2.4, pp. 8]{Gautam+Mohan_2025})
\begin{align}\label{213}
	&\langle \mathcal{P}(|\u|^{r-1}\u)-\mathcal{P}(|\v|^{r-1}\v),\u-\v\rangle
%	\nonumber\\	&=\langle (|\u|^{r-1}\u)-(|\v|^{r-1}\v),\u-\v\rangle\nonumber\\&\leq \||\u|^{r-1}\u-|\v|^{r-1}\v\|_{\widetilde{\L}^{\frac{r+1}{r}}}\|\u-\v\|_{\widetilde{\L}^{r+1}}\nonumber\\&
	\leq r\left(\|\u\|_{\widetilde{\L}^{r+1}}+\|\v\|_{\widetilde{\L}^{r+1}}\right)^{r-1}\|\u-\v\|_{\widetilde{\L}^{r+1}}^2,
\end{align}
for all $\u,\v\in\widetilde{\L}^{r+1}$.
Thus the operator $\mathcal{C}(\cdot) : \V\cap\widetilde{\L}^{r+1}\to \V^\prime + \widetilde{\L}^{\frac{r+1}{r}}$ is locally Lipschitz. Moreover, for any $r\in[1,\infty)$, we have (see \cite[Subsection 2.4, pp. 9]{Gautam+Mohan_2025})
\begin{align}\label{2.23}
	&\langle\mathcal{P}(\u|\u|^{r-1})-\mathcal{P}(\v|\v|^{r-1}),\u-\v\rangle\geq \frac{1}{2}\||\u|^{\frac{r-1}{2}}(\u-\v)\|_{\H}^2+\frac{1}{2}\||\v|^{\frac{r-1}{2}}(\u-\v)\|_{\H}^2\geq 0,
\end{align}
for $r\geq 1$.

\begin{proposition}
	For $\u,\v\in\V\cap\wi\L^{r+1}$, we have 
	\begin{align}
		\langle\mathcal{C}(\u)-\mathcal{C}(\v),\u-\v\rangle\geq\left\{\begin{array}{ll}\frac{1}{2}\|\u-\v\|_{\wi\L^{r+1}}^{r+1},&\text{ for } 1\leq r\leq 2,\\
		\frac{1}{2^{r-1}}\|\u-\v\|_{\wi\L^{r+1}}^{r+1},&\text{ for } 2\leq r<\infty.\end{array}\right. 
	\end{align}
\end{proposition}
\begin{proof}
	We know for $2\leq r<\infty$ that 
	\begin{align}
		\|\u-\v\|_{\wi\L^{r+1}}^{r+1}&=\int_{\mathcal{O}}|\u(x)-\v(x)|^{2}|\u(x)-\v(x)|^{r-1}\d x\nonumber\\&\leq 2^{r-2}\int_{\mathcal{O}}|\u(x)|^{r-1}|\u(x)-\v(x)|^{2}\d x+2^{r-2}\int_{\mathcal{O}}|\v(x)|^{r-1}|\u(x)-\v(x)|^{2}\d x\nonumber\\&=2^{r-2}\||\u|^{\frac{r-1}{2}}(\u-\v)\|_{\H}^2+2^{r-2}\||\v|^{\frac{r-1}{2}}(\u-\v)\|_{\H}^2\nonumber\\&\leq 2^{r-1}\langle\mathcal{C}(\u)-\mathcal{C}(\v),\u-\v\rangle,
	\end{align}
	where we have used \eqref{2.23} also. The case $1\leq r\leq 2$ follows in the similar lines. 
\end{proof}

\begin{theorem}{\cite[Theorem 2.5]{Gautam+Mohan_2025}}\label{thm2.2}
	Let $\u,\v\in\V\cap\widetilde{\L}^{r+1}$, for $r>3$. Then,	for the operator $\G(\u):=\mu \A\u +\B(\u) + \alpha \u +\beta\mathcal{C}(\u)$, we  have 
	\begin{align}\label{fe}
		\langle \G(\u)-\G(\v),\u-\v\rangle+\varrho\|\u-\v\|_{\H}^2\geq 0,
	\end{align}
	where
	\begin{align}\label{215}
		\varrho=\frac{r-3}{2\mu(r-1)}\left(\frac{2}{\beta\mu (r-1)}\right)^{\frac{2}{r-3}}.
	\end{align} 
	That is, the operator $\G+\varrho\mathrm{I}_d$, where $\I_d$ is an identity operator on $\H$, is a monotone operator from $\V\cap\widetilde{\L}^{r+1}$ to $\V'+\widetilde{\L}^{\frac{r+1}{r}}$. 
\end{theorem}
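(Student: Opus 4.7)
Set $\w=\u-\v$. The idea is to split
\[
\langle \G(\u)-\G(\v),\w\rangle = \mu\|\w\|_\V^2 + \langle \B(\u)-\B(\v),\w\rangle + \beta\langle \mathcal{C}(\u)-\mathcal{C}(\v),\w\rangle ,
\]
so that the Stokes term supplies $\V$-coercivity while, by \eqref{2.23}, the absorption term supplies
\[
\beta\langle \mathcal{C}(\u)-\mathcal{C}(\v),\w\rangle \geq \frac{\beta}{2}\||\u|^{(r-1)/2}\w\|_\H^{2}+\frac{\beta}{2}\||\v|^{(r-1)/2}\w\|_\H^{2}.
\]
The whole game is to dominate the indefinite convective contribution by a mixture of these two favourable terms plus a multiple of $\|\w\|_\H^{2}$.

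\textbf{Reducing the convective term.} Using $\B(\u)-\B(\v)=\B(\w,\u)+\B(\v,\w)$, the identity $b(\v,\w,\w)=0$ from \eqref{b0}, and integration by parts $b(\w,\u,\w)=-b(\w,\w,\u)$ (valid since $\nabla\cdot\w=0$), Cauchy--Schwarz yields
\[
|\langle \B(\u)-\B(\v),\w\rangle|=|b(\w,\w,\u)|\leq \|\w\|_\V\,\||\u|\,|\w|\|_\H .
\]

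\textbf{Interpolation and two Young's inequalities.} Writing $|\u|^{2}=(|\u|^{r-1})^{2/(r-1)}\cdot 1^{(r-3)/(r-1)}$ and applying H\"older with conjugate exponents $(r-1)/2$ and $(r-1)/(r-3)$---this is precisely where $r>3$ is used---gives
\[
\||\u|\,|\w|\|_\H^{2} \leq \||\u|^{(r-1)/2}\w\|_\H^{4/(r-1)}\,\|\w\|_\H^{2(r-3)/(r-1)} .
\]
A first Young's inequality splits the convective bound as
$\|\w\|_\V\,\||\u|\,|\w|\|_\H \leq \tfrac{\mu}{2}\|\w\|_\V^{2}+\tfrac{1}{2\mu}\||\u|\,|\w|\|_\H^{2}$, halving the available $\mu\|\w\|_\V^{2}$. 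A second, weighted Young's inequality with exponents $(r-1)/2$ and $(r-1)/(r-3)$ and weight $\delta=(\beta\mu(r-1)/2)^{2/(r-1)}$ is then tuned so that
\[
\tfrac{1}{2\mu}\||\u|\,|\w|\|_\H^{2}\leq \tfrac{\beta}{2}\||\u|^{(r-1)/2}\w\|_\H^{2}+\eta\,\|\w\|_\H^{2},
\]
where $\eta$ is exactly the constant in \eqref{215}; indeed, $\delta^{-(r-1)/(r-3)}=(2/(\beta\mu(r-1)))^{2/(r-3)}$ accounts for the second factor, and the combinatorial prefactor $\tfrac{r-3}{2\mu(r-1)}$ emerges from $\tfrac{1}{2\mu}\cdot\tfrac{r-3}{r-1}$.

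\textbf{Conclusion and main obstacle.} Collecting the terms, the $\tfrac{\mu}{2}\|\w\|_\V^{2}$ and the $\tfrac{\beta}{2}\||\u|^{(r-1)/2}\w\|_\H^{2}$ cancel perfectly and
\[
\langle \G(\u)-\G(\v),\w\rangle \geq \tfrac{\mu}{2}\|\w\|_\V^{2}+\tfrac{\beta}{2}\||\v|^{(r-1)/2}\w\|_\H^{2}-\eta\,\|\w\|_\H^{2}\geq -\eta\,\|\w\|_\H^{2},
\]
which is \eqref{fe}. The only delicate point is calibrating the weight $\delta$ so that the viscous bucket and the nonlinear bucket are saturated exactly, producing the sharp constant of \eqref{215}; any other weight yields the same qualitative inequality with a larger $\eta$. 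The restriction $r>3$ is essential because only then is the H\"older pair $((r-1)/2,(r-1)/(r-3))$ admissible and only then does the interpolation leave a genuine positive power of $\|\w\|_\H^{2}$ rather than a $\V$-norm that cannot be absorbed.
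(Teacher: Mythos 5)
Your proof is correct and follows essentially the same route as the paper: the viscous term gives $\mu\|\w\|_{\V}^2$, the estimate \eqref{2.23} gives the weighted damping terms, and the convective term is absorbed via the H\"older interpolation with exponents $\tfrac{r-1}{2},\tfrac{r-1}{r-3}$ and a tuned Young inequality producing exactly the $\eta$ of \eqref{215}. The only (harmless) difference is that you write the convective term as $-b(\w,\w,\u)$ and cancel against the $|\u|$-weighted half of \eqref{2.23}, whereas the paper uses $-b(\w,\w,\v)$ and the $|\v|$-weighted half; by the symmetry of \eqref{2.23} the two are interchangeable and yield the same constant.
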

\begin{proof}
	We estimate $	\langle\A\u-\A\v,\u-\v\rangle $ by	using an integration by parts as
	\begin{align}\label{ae}
		\mu\langle\A\u-\A\v,\u-\v\rangle + \alpha\langle\u- \v,\u-\v\rangle =\mu\|\nabla(\u-\v)\|^2_{\H} + \alpha \|\u-\v\|^2_{\H}.
	\end{align}
	From \eqref{2.23}, we easily have 
	\begin{align}\label{2.27}
		\beta	\langle\mathcal{C}(\u)-\mathcal{C}(\v),\u-\v\rangle \geq \frac{\beta}{2}\||\v|^{\frac{r-1}{2}}(\u-\v)\|_{\H}^2. 
	\end{align}
%	Note that $\langle\B(\u,\u-\v),\u-\v\rangle=0$ and it implies that
%	\begin{equation}\label{441}
%		\begin{aligned}
%			\langle \B(\u)-\B(\v),\u-\v\rangle &=\langle\B(\u,\u-\v),\u-\v\rangle +\langle \B(\u-\v,\v),\u-\v\rangle \nonumber\\&=\langle\B(\u-\v,\v),\u-\v\rangle=-\langle\B(\u-\v,\u-\v),\v\rangle.
%		\end{aligned}
%	\end{equation} 
%	Using H\"older's and Young's inequalities, we estimate $|\langle\B(\u-\v,\u-\v),\v\rangle|$ as  
%	\begin{align}\label{2p28}
%		|\langle\B(\u-\v,\u-\v),\v\rangle|&\leq\|\nabla(\u-\v)\|_{\H}\||\v|(\u-\v)\|_{\H}\nonumber\\&\leq\frac{\mu }{2}\|\nabla(\u-\v)\|_{\H}^2+\frac{1}{2\mu }\||\v|\u-\v)\|_{\H}^2.
%	\end{align}
%	We take the term $\|\v(\u-\v)\|_{\H}^2$ from \eqref{2p28} and use H\"older's and Young's inequalities to estimate it as (see \cite{KWH} also)
%	\begin{align}\label{2.29}
%		&\int_{\mathcal{O}}|\v(x)|^2|\u(x)-\v(x)|^2\d x\nonumber\\&=\int_{\mathcal{O}}|\v(x)|^2|\u(x)-\v(x)|^{\frac{4}{r-1}}|\u(x)-\v(x)|^{\frac{2(r-3)}{r-1}}\d x\nonumber\\&\leq\left(\int_{\mathcal{O}}|\v(x)|^{r-1}|\u(x)-\v(x)|^2\d x\right)^{\frac{2}{r-1}}\left(\int_{\mathcal{O}}|\u(x)-\v(x)|^2\d x\right)^{\frac{r-3}{r-1}}\nonumber\\&\leq{\beta\mu }\left(\int_{\mathcal{O}}|\v(x)|^{r-1}|\u(x)-\v(x)|^2\d x\right)\nonumber\\&\quad+\frac{r-3}{r-1}\left(\frac{2}{\beta\mu (r-1)}\right)^{\frac{2}{r-3}}\left(\int_{\mathcal{O}}|\u(x)-\v(x)|^2\d x\right),
%	\end{align}
%	for $r>3$. Using \eqref{2.29} in \eqref{2p28}, we find 
	From \cite[Eqn. (2.16), pp. 10]{Gautam+Mohan_2025}, we write
	\begin{align}\label{2.30}
		&|\langle\B(\u-\v,\u-\v),\v\rangle| \leq\frac{\mu }{2}\|\nabla(\u-\v)\|_{\H}^2+\frac{\beta}{2}\||\v|^{\frac{r-1}{2}}(\u-\v)\|_{\H}^2+\varrho\|\u-\v\|_{\H}^2.
	\end{align}
	Combining \eqref{ae}-\eqref{2.30}, we get 
	\begin{align}
		\langle\G(\u)-\G(\v),\u-\v\rangle+\varrho\|\u-\v\|_{\H}^2\geq\frac{\mu }{2}\|\nabla(\u-\v)\|_{\H}^2 + \alpha \|\u-\v\|_{\H}^2\geq 0,
	\end{align}
	for $r>3$ and the estimate \eqref{fe} follows.  
\end{proof} 
\begin{theorem}\label{thm2.3}
	For the critical case $r=3$ with $2\beta\mu \geq 1$, the operator $\G(\cdot):\V\cap\widetilde{\L}^{4}\to \V'+\widetilde{\L}^{\frac{4}{3}}$ is globally monotone, that is, for all $\u,\v\in\V$, we have 
	\begin{align}\label{218}\langle\G(\u)-\G(\v),\u-\v\rangle\geq 0.\end{align}
\end{theorem}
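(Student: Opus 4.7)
The plan is to adapt the argument of Theorem~\ref{thm2.2} to the critical exponent, but since $r=3$ makes the interpolation trick of \eqref{2.29} degenerate (the exponent $\frac{r-3}{r-1}$ vanishes), there is no room to absorb anything into an $\eta\|\u-\v\|_\H^2$ remainder. Consequently, the absorption provided by the Forchheimer term must \emph{exactly} balance the worst part of the bilinear term, and this is precisely what the threshold $2\beta\mu\geq 1$ encodes. The novelty compared to Theorem~\ref{thm2.2} is that I will symmetrize both the bilinear and the nonlinear contributions so as to obtain matching constants.

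\textbf{Splitting.} First I would write $\G(\u)-\G(\v)=\mu(\A\u-\A\v)+(\B(\u)-\B(\v))+\beta(\mathcal{C}(\u)-\mathcal{C}(\v))$. The linear term contributes $\mu\|\u-\v\|_{\V}^2$ by \eqref{ae}. For the bilinear term I will use \eqref{b0} twice to obtain the two equivalent expressions
\begin{equation*}
\langle \B(\u)-\B(\v),\u-\v\rangle=-\langle \B(\u-\v,\u-\v),\u\rangle=-\langle \B(\u-\v,\u-\v),\v\rangle,
\end{equation*}
and then average them, getting the symmetric form
\begin{equation*}
\langle \B(\u)-\B(\v),\u-\v\rangle=-\tfrac{1}{2}\langle \B(\u-\v,\u-\v),\u+\v\rangle.
\end{equation*}
For the absorption term, unlike \eqref{2.27}, I will retain \emph{both} halves of the estimate \eqref{2.23}: with $r=3$ this reads
\begin{equation*}
\beta\langle\mathcal{C}(\u)-\mathcal{C}(\v),\u-\v\rangle\geq\tfrac{\beta}{2}\|\u(\u-\v)\|_{\H}^2+\tfrac{\beta}{2}\|\v(\u-\v)\|_{\H}^2.
\end{equation*}

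\textbf{Estimating the bilinear term.} Applying Hölder's inequality in the form $|b(\a,\b,\c)|\leq\|\b\|_{\V}(\int|\a|^2|\c|^2\d x)^{1/2}$ together with the triangle inequality $|\u+\v|\leq|\u|+|\v|$ yields
\begin{equation*}
|\langle \B(\u-\v,\u-\v),\u+\v\rangle|\leq \|\u-\v\|_{\V}\bigl(\|\u(\u-\v)\|_{\H}+\|\v(\u-\v)\|_{\H}\bigr).
\end{equation*}
Two applications of Young's inequality with parameter $\mu$ then give the bound $\mu\|\u-\v\|_{\V}^2+\frac{1}{2\mu}(\|\u(\u-\v)\|_{\H}^2+\|\v(\u-\v)\|_{\H}^2)$; dividing by $2$ one obtains
\begin{equation*}
\tfrac{1}{2}|\langle \B(\u-\v,\u-\v),\u+\v\rangle|\leq \tfrac{\mu}{2}\|\u-\v\|_{\V}^2+\tfrac{1}{4\mu}\bigl(\|\u(\u-\v)\|_{\H}^2+\|\v(\u-\v)\|_{\H}^2\bigr).
\end{equation*}

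\textbf{Combining and conclusion.} Adding the three contributions, the $\V$-coercivity halves (from $\mu-\mu/2$) and the two quartic terms combine with common coefficient $\frac{\beta}{2}-\frac{1}{4\mu}=\frac{2\beta\mu-1}{4\mu}$, leaving
\begin{equation*}
\langle \G(\u)-\G(\v),\u-\v\rangle\geq \tfrac{\mu}{2}\|\u-\v\|_{\V}^2+\tfrac{2\beta\mu-1}{4\mu}\bigl(\|\u(\u-\v)\|_{\H}^2+\|\v(\u-\v)\|_{\H}^2\bigr),
\end{equation*}
which is nonnegative precisely under the assumed condition $2\beta\mu\geq 1$. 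The main subtlety, and the reason one cannot simply copy the proof of Theorem~\ref{thm2.2} verbatim, is that symmetrizing \emph{only} the bilinear term (keeping a single half of the monotonicity of $\mathcal{C}$) leads to the suboptimal condition $\beta\mu\geq 1$; the sharp threshold $2\beta\mu\geq 1$ emerges only when both the absorption inequality \eqref{2.23} and the trilinear identity are symmetrized simultaneously so that the coefficients in front of $\|\u(\u-\v)\|_{\H}^2$ and $\|\v(\u-\v)\|_{\H}^2$ balance.
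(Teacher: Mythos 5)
Your proof is correct, but it takes a slightly different route from the paper's. The paper does not symmetrize at all: it writes $\langle\B(\u)-\B(\v),\u-\v\rangle=-\langle\B(\u-\v,\u-\v),\v\rangle$, bounds this by $\mu\|\u-\v\|_{\V}^2+\frac{1}{4\mu}\|\v(\u-\v)\|_{\H}^2$ (spending the \emph{entire} coercive term $\mu\|\u-\v\|_{\V}^2$ in Young's inequality), and keeps only the $\v$-half of \eqref{2.23}, obtaining $\langle\G(\u)-\G(\v),\u-\v\rangle\geq\frac{1}{2}\bigl(\beta-\frac{1}{2\mu}\bigr)\|\v(\u-\v)\|_{\H}^2\geq 0$ when $2\beta\mu\geq 1$. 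Your symmetrized version, using $-\frac{1}{2}\langle\B(\u-\v,\u-\v),\u+\v\rangle$ together with both halves of \eqref{2.23}, is equally valid and in fact buys a strictly stronger conclusion: you retain the residual term $\frac{\mu}{2}\|\u-\v\|_{\V}^2+\frac{2\beta\mu-1}{4\mu}\bigl(\|\u(\u-\v)\|_{\H}^2+\|\v(\u-\v)\|_{\H}^2\bigr)$, i.e.\ a quantitative (strong) monotonicity estimate analogous to what the paper obtains for $r>3$, which could be useful elsewhere (e.g.\ in uniqueness or stability arguments). One caveat: your closing remark that the sharp threshold $2\beta\mu\geq 1$ ``emerges only when both the absorption inequality and the trilinear identity are symmetrized simultaneously'' is not accurate — the paper reaches the same threshold with no symmetrization, simply by absorbing the full $\mu\|\u-\v\|_{\V}^2$ rather than half of it; symmetrization is what lets you keep the extra $\frac{\mu}{2}\|\u-\v\|_{\V}^2$, not what produces the constant $2\beta\mu\geq 1$.
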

\begin{proof}
	From \eqref{2.23}, we have 
	\begin{align}\label{231}
		\beta\langle\mathcal{C}(\u)-\mathcal{C}(\v),\u-\v\rangle\geq\frac{\beta}{2}\||\v|(\u-\v)\|_{\H}^2. 
	\end{align}
	We estimate $|\langle\B(\u-\v,\u-\v),\v\rangle|$ using H\"older's and Young's inequalities as 
	\begin{align}\label{232}
		|\langle\B(\u-\v,\u-\v),\v\rangle|\leq\|\v(\u-\v)\|_{\H}\|\nabla(\u-\v)\|_{\H} \leq\mu \|\nabla(\u-\v)\|_{\H}^2+\frac{1}{4\mu }\||\v|(\u-\v)\|_{\H}^2.
	\end{align}
	Combining \eqref{ae}, \eqref{231} and \eqref{232}, we obtain 
	\begin{align}
		\langle\G(\u)-\G(\v),\u-\v\rangle\geq\frac{1}{2}\left(\beta-\frac{1}{2\mu }\right)\||\v|(\u-\v)\|_{\H}^2 + \alpha \|\u-\v\|_{\H}^2\geq 0,
	\end{align}
	provided $2\beta\mu \geq 1$. 
\end{proof}

\begin{remark}
1. 	As in \cite{YZ}, for $r\geq 3$, One can estimate $|\langle\B(\u-\v,\u-\v),\v\rangle|$ as (see \cite{YZ} and \cite[Remark 2.7]{Gautam+Mohan_2025})
	\begin{align}\label{2.26}
&	|\langle\B(\u-\v,\u-\v),\v\rangle|
%\nonumber\\&\leq \mu \|\nabla(\u-\v)\|_{\H}^2+\frac{1}{4\mu }\int_{\mathcal{O}}|\v(x)|^2|\u(x)-\v(x)|^2\d x\nonumber\\&= \mu \|\nabla(\u-\v)\|_{\H}^2+\frac{1}{4\mu }\int_{\mathcal{O}}|\u(x)-\v(x)|^2\left(|\v(x)|^{r-1}+1\right)\frac{|\v(x)|^2}{|\v(x)|^{r-1}+1}\d x
%\nonumber\\&\leq \mu \|\nabla(\u-\v)\|_{\H}^2+\frac{1}{4\mu }\int_{\mathcal{O}}|\v(x)|^{r-1}|\u(x)-\v(x)|^2\d x+\frac{1}{4\mu }\int_{\mathcal{O}}|\u(x)-\v(x)|^2\d x,
 \leq \mu \|\nabla(\u-\v)\|_{\H}^2+\frac{1}{4\mu } \||\v|^{\frac{r-1}{2}}(\u-\v)\|_{\H}^2 + \frac{1}{4\mu }\|\u-\v\|_{\H}^2,
	\end{align}
	which yields %a local monotonicity result as
			\begin{align*}
			\langle\G(\u)-\G(\v),\u-\v\rangle+\frac{1}{4\mu}\|\u-\v\|_{\H}^2\geq 0,
		\end{align*}
		 provided $2\beta\mu \geq 1$. That is, the operator $\G+\frac{1}{4\mu}\mathrm{I}_d$ is a monotone operator from $\V\cap\widetilde{\L}^{r+1}$ to $\V'+\widetilde{\L}^{\frac{r+1}{r}}$.

	2. For $d=2$ and $r= 3$, one can estimate $|\langle\B(\u-\v,\u-\v),\v\rangle|$ using H\"older's, Ladyzhenskaya and Young's inequalities  as  (see \cite[Remark 2.7]{Gautam+Mohan_2025})
	\begin{align}\label{2.21}
	|\langle\B(\u-\v,\u-\v),\v\rangle|
	%&\leq \|\u-\v\|_{\wi\L^4}\|\nabla(\u-\v)\|_{\H}\|\v\|_{\wi\L^4}\nonumber\\&\leq 2^{1/4}\|\u-\v\|_{\H}^{1/2}\|\nabla(\u-\v)\|_{\H}^{3/2}\|\v\|_{\wi\L^4}\nonumber\\&
	\leq  \frac{\mu }{2}\|\nabla(\u-\v)\|_{\H}^2+\frac{27}{16\mu ^3}\|\v\|_{\widetilde{\L}^4}^4\|\u-\v\|_{\H}^2.
	\end{align}
	%Combining \eqref{ae}, \eqref{2.27} and \eqref{2.21}, we obtain 
	which implies
	\begin{align}\label{fe2}
	\langle(\G(\u)-\G(\v),\u-\v\rangle+ \frac{27}{32\mu ^3}N^4\|\u-\v\|_{\H}^2\geq 0,
	\end{align}
	for all $\v\in\widehat{\mathbb{B}}_N$, where $\widehat{\mathbb{B}}_N$ is an $\widetilde{\L}^4$-ball of radius $N$, that is,
	$
	\widehat{\mathbb{B}}_N:=\big\{\z\in\widetilde{\L}^4:\|\z\|_{\widetilde{\L}^4}\leq N\big\}.
	$ Thus, the operator $\G(\cdot)$ is locally monotone in this case (see \cite{MJSS,MTM}, etc). 
	\end{remark}

Let us now show that the operator $\G:\V\cap\widetilde{\L}^{r+1}\to \V'+\widetilde{\L}^{\frac{r+1}{r}}$ is hemicontinuous, which is useful in proving the existence of weak solutions and strong solutions to the system \eqref{1}.

\begin{lemma}\label{lem2.8}
	The operator $\G:\V\cap\widetilde{\L}^{r+1}\to \V'+\widetilde{\L}^{\frac{r+1}{r}}$ is demicontinuous. 
\end{lemma}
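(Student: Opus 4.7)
The plan is to prove the stronger statement that $\G$ is in fact \emph{strongly continuous} (i.e., norm-to-norm continuous) from $\V\cap\widetilde{\L}^{r+1}$ into $\V'+\widetilde{\L}^{\frac{r+1}{r}}$; since strong continuity trivially implies demicontinuity, this will suffice. To this end I would take a sequence $\u_k\to\u$ in $\V\cap\widetilde{\L}^{r+1}$ and bound $\|\G(\u_k)-\G(\u)\|_{\V'+\widetilde{\L}^{\frac{r+1}{r}}}$ piece by piece, exploiting the local Lipschitz estimates \eqref{lip} and \eqref{213} together with \eqref{2.7a} that were already established earlier in this section.

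First, for the linear part, \eqref{2.7a} yields $\|\mu\A\u_k-\mu\A\u\|_{\V'}\leq\mu\|\u_k-\u\|_{\V}\to 0$, so $\mu\A$ is globally Lipschitz $\V\to\V'$. Second, for the convective term, since $\u_k\to\u$ in $\V\cap\widetilde{\L}^{r+1}$ the sequence is bounded in both norms, in particular in $\H$ (by the embedding $\V\hookrightarrow\H$) and in $\widetilde{\L}^{r+1}$; call a common upper bound $M$. The estimate \eqref{lip} then gives, for $r>3$,
\begin{equation*}
\|\B(\u_k)-\B(\u)\|_{\V'+\widetilde{\L}^{\frac{r+1}{r}}}
\leq 2\,M^{\frac{r-3}{r-1}}M^{\frac{2}{r-1}}\,\|\u_k-\u\|_{\widetilde{\L}^{r+1}}\longrightarrow 0,
\end{equation*}
while the analogous $r=3$ bound on $\V\cap\widetilde{\L}^4$ handles the critical case in the same way. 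Third, the local Lipschitz estimate \eqref{213} for $\mathcal{C}$ yields
\begin{equation*}
\|\beta\mathcal{C}(\u_k)-\beta\mathcal{C}(\u)\|_{\widetilde{\L}^{\frac{r+1}{r}}}
\leq \beta r\bigl(\|\u_k\|_{\widetilde{\L}^{r+1}}+\|\u\|_{\widetilde{\L}^{r+1}}\bigr)^{r-1}\|\u_k-\u\|_{\widetilde{\L}^{r+1}}
\leq \beta r(2M)^{r-1}\|\u_k-\u\|_{\widetilde{\L}^{r+1}},
\end{equation*}
which also tends to zero.

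Summing the three contributions and using the definition of the sum-space norm $\|\cdot\|_{\V'+\widetilde{\L}^{\frac{r+1}{r}}}$ (splitting $\mu\A(\u_k-\u)\in\V'$, $\beta(\mathcal{C}(\u_k)-\mathcal{C}(\u))\in\widetilde{\L}^{\frac{r+1}{r}}$, and the two-part decomposition of $\B(\u_k)-\B(\u)$ provided by \eqref{lip}) shows $\|\G(\u_k)-\G(\u)\|_{\V'+\widetilde{\L}^{\frac{r+1}{r}}}\to 0$. Thus for every fixed $\y\in\V\cap\widetilde{\L}^{r+1}$, the duality pairing $\langle\G(\u_k)-\G(\u),\y\rangle$ vanishes in the limit, giving $\G(\u_k)\xrightarrow{w}\G(\u)$; this is demicontinuity, and in particular hemicontinuity follows.

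I do not expect a genuine obstacle here: every ingredient has already been proved above, and the argument is essentially an assembly of the three local-Lipschitz bounds together with the observation that convergent sequences are bounded. The only minor point to be careful about is organizing the decomposition into $\V'$ and $\widetilde{\L}^{\frac{r+1}{r}}$ components when invoking the infimum-type norm on the sum space, and distinguishing the supercritical case $r>3$ (where \eqref{lip} interpolates through $\H$) from the critical case $r=3$ (where the analogous $\widetilde{\L}^4$ bound is used); both are handled uniformly by the same strategy.
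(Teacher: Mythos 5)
Your proposal is correct and rests on exactly the same ingredients as the paper's proof --- the bound $\|\A\w\|_{\V'}\leq\|\w\|_{\V}$ from \eqref{2.7a}, the bilinear estimate \eqref{lip} (with its $r=3$ analogue), and the local Lipschitz bound \eqref{213} for $\mathcal{C}$ --- the only difference being packaging: you conclude norm continuity of $\G$ into $\V'+\widetilde{\L}^{\frac{r+1}{r}}$ and deduce demicontinuity, whereas the paper pairs $\G(\u^n)-\G(\u)$ directly against a fixed $\v\in\V\cap\widetilde{\L}^{r+1}$ and re-derives the same H\"older/interpolation/Taylor estimates. Both routes are valid, and your strong-continuity conclusion is marginally stronger than what the lemma requires.
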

\begin{proof}
	The proof follows from \cite[Lemma 2.8]{Gautam+Mohan_2025}.
\end{proof}

	\subsection{A compact operator}\label{C_O} We follow \cite[Subsection 2.3]{Brzezniak+Motyl_2013} for the contents of this section. 
Consider the natural embedding $j:\V\hookrightarrow\H$ and its adjoint $j^*:\H\hookrightarrow\V$. Since the range of $j$ is dense in $\H$, the map $j^*$ is one-to-one. Let us define
\begin{align}\label{L1}
	\D(\mathcal{A})&:=j^*(\H)\subset\V,\nonumber \\
	\mathcal{A}\u&:=(j^*)^{-1}\u, \ \ \u\in\D(\mathcal{A}).
\end{align}
Note that for all $\u\in\D(\mathcal{A})$ and $\v\in\V$
\begin{align*}
	(\mathcal{A}\u,\v)_{\H}=(\u,\v)_{\V}.
\end{align*}
Let us assume that $s>2$. It is clear that $\V_s$ is dense in $\V$ and the embedding $j_s:\V_s\hookrightarrow\V$ is continuous. Then, there exists a Hilbert space $\mathbb{U}$ (cf. \cite{Holly+Wiciak_1995}, \cite[Lemma C.1]{Brzezniak+Motyl_2013}) such that $\mathbb{U}\subset\V_s$, $\mathbb{U}$ is dense in $\V_s$ and 
\begin{align*}
	\text{  the natural embedding }\iota_s:\mathbb{U}\hookrightarrow\V_s \text{ is compact.}
\end{align*}
It implies that 
\begin{align*}
	\mathbb{U} \xhookrightarrow[\iota_s]{} \V_s\xhookrightarrow[j_s]{}\V\xhookrightarrow[j]{}\H\cong{\H}{'}\xhookrightarrow[j']{}\V'\xhookrightarrow[j'_s]{}  {\V}'_{s}\xhookrightarrow[\iota'_s]{}\mathbb{U}'.
\end{align*}
Consider the composition $$\iota:=j\circ j_s\circ\iota_s:\mathbb{U}\to\H$$ and its adjoint $$\iota^*:=(j\circ j_s\circ\iota_s)^*=\iota_s^*\circ j^*_s\circ j^*:\H\to \mathbb{U}.$$ We have that $\iota$ is compact and since its range is dense in $\H$, $\iota^*:\H\to\mathbb{U}$ is one-one. Let us define 
\begin{align}\label{L2}
	\D(\mathfrak{L})&:=\iota^*(\H)\subset\mathbb{U},\nonumber\\
	\mathfrak{L}\u&:=(\iota^*)^{-1}\u, \ \ \u\in\D(\mathfrak{L}).
\end{align}
Also we have that $\mathfrak{L}:\D(\mathfrak{L})\to\H$ is onto, $\D(\mathfrak{L})$ is dense in $\H$ and 
\begin{align*}
	(\mathfrak{L}\u,\w)_{\H}=(\u,\w)_{\mathbb{U}}, \ \ \ \ \u\in\D(\mathfrak{L}), \ \w\in\mathbb{U}.
\end{align*}
Furthermore, for $\u\in\D(\mathfrak{L})$,
\begin{align*}
	\mathfrak{L}\u=(\iota^*)^{-1}\u=(j^*)^{-1}\circ (j^*_s)^{-1}\circ(\iota^*_s)^{-1}\u=\mathcal{A}\circ (j^*_s)^{-1}\circ(\iota^*_s)^{-1}\u,
\end{align*}
where $\mathcal{A}$ is defined in \eqref{L1}. Since the operator $\mathfrak{L}$ is self-adjoint and $\mathfrak{L}^{-1}$ is compact, there exists an orthonormal basis $\{\boldsymbol{e}_i\}_{i\in\N}$ of $\H$ such that 
\begin{align}\label{L3}
	\mathfrak{L}\boldsymbol{e}_i=\nu_i\boldsymbol{e}_i,\ \ \ \ i\in\N,
\end{align}
that is, $\{\boldsymbol{e}_i\}_{i\in\N}$ are the eigenfunctions and $\{\nu_i\}_{i\in\N}$ are the corresponding eigenvalues of operator $\mathfrak{L}$. Note that $\boldsymbol{e}_i\in\mathbb{U}$, \ $i\in\N$, because $\D(\mathfrak{L})\subset \mathbb{U}$. 

Let us fix $m\in\N$ and let $\P_m$ be the operator from $\mathbb{U}'$ to $\mathrm{span}\{\boldsymbol{e}_1,\ldots,\boldsymbol{e}_m\}=:\H_{m}$ defined by 
\begin{align}\label{eqn-projection}
	\P_m\u^*:=\sum_{i=1}^{m}\langle\u^*,\boldsymbol{e}_i\rangle_{\mathbb{U}'\times\mathbb{U}}\boldsymbol{e}_i, \ \ \ \ \ \ \u^*\in\mathbb{U}'.
\end{align}
We will consider the restriction of operator $\P_m$ to the space $\H$ denoted still by the  same. In particular, we have $\H\hookrightarrow\mathbb{U}'$, that is, every element $\u\in\H$ induces a functional $\u^*\in\mathbb{U}'$ by 
\begin{align}
	\langle\u^*,\v\rangle_{\mathbb{U}'\times\mathbb{U}}:=(\u,\v), \ \ \ \ \v\in\mathbb{U}.
\end{align}
Thus the restriction of $\P_m$ to $\H$ is given by 
\begin{align}
	\P_m\u:=\sum_{i=1}^{m}(\u,\boldsymbol{e}_i)\boldsymbol{e}_i, \ \ \ \ \ \ \u\in\H.
\end{align}
Hence, in particular, $\P_m$ is the orthogonal projection from $\H$ onto $\H_m$. 
\begin{lemma}[{\cite[Lemma 2.4]{Brzezniak+Motyl_2013}}]
	For every $\u\in\mathbb{U}$ and $s>2$, we have 
	\begin{itemize}
		\item [(i)] $\lim\limits_{m\to\infty}\|\P_m\u-\u\|_{\mathbb{U}}=0$,
		\item [(ii)] $\lim\limits_{m\to\infty}\|\P_m\u-\u\|_{\mathbb{V}_s}=0$, 
		\item [(iii)] $\lim\limits_{m\to\infty}\|\P_m\u-\u\|_{\mathbb{V}}=0$.
	\end{itemize}
\end{lemma}

\section{Stochastic  convective Brinkman-Forchheimer  equations}\label{sec4}\setcounter{equation}{0} In this section, we consider the system \eqref{31} and discuss the existence and uniqueness of  strong solutions in probabilistic sense.

\subsection{Noise coefficient} Let $(\Omega,\mathscr{F},\mathbb{P})$ be a complete probability space equipped with an increasing family of sub-sigma fields $\{\mathscr{F}_t\}_{0\leq t\leq T}$ of $\mathscr{F}$ satisfying:
\begin{enumerate}
	\item [(i)] $\mathscr{F}_0$ contains all elements $F\in\mathscr{F}$ with $\mathbb{P}(F)=0$,
	\item [(ii)] $\mathscr{F}_t=\mathscr{F}_{t+}=\bigcap\limits_{s>t}\mathscr{F}_s,$ for $0\leq t\leq T$.
\end{enumerate} 

\begin{definition}
Let $\mathcal{H}$ be a given Hilbert space.	A stochastic process $\{\W(t)\}_{0\leq
		t\leq T}$ is said to be an \emph{$\mathcal{H}$-valued $\mathscr{F}_t$-adapted
		Wiener process} with covariance operator $\Q$ if
	\begin{enumerate}
		\item [$(i)$] for each non-zero $h\in \mathcal{H}$, $\|\Q^{1/2}h\|_{\mathcal{H}}^{-1} (\W(t), h)$ is a standard one-dimensional Wiener process,
		\item [$(ii)$] for any $h\in \mathcal{H}, (\W(t), h)$ is a martingale adapted to $\mathscr{F}_t$.
	\end{enumerate}
\end{definition}
The stochastic process $\{\W(t) \}_{0\leq t\leq T}$ is an $\mathcal{H}$-valued Wiener process with covariance $\Q$ if and only if for arbitrary $t$, the process $\W(t)$ can be expressed as 
\begin{align*}
	\W(t,x) =\sum_{k=1}^{\infty}\sqrt{\mu_k}\boldq_k(x)\upbeta_k(t),
\end{align*}
 where  $\upbeta_{k}(t),k\in\mathbb{N}$ are independent one-dimensional Brownian motions on $(\Omega,\mathscr{F},\mathbb{P})$ and $\{\boldq_k \}_{k=1}^{\infty}$ is the orthonormal basis of $\mathcal{H}$ such that $\Q \boldq_k=\mu_k \boldq_k$.  If $\W(\cdot)$ is an $\mathcal{H}$-valued Wiener process with covariance operator $\Q$ with $\Tr \Q=\sum_{k=1}^{\infty} \mu_k< +\infty$, then $\W(\cdot)$ is a Gaussian process on $\mathcal{H}$ and $ \E[\W(t)] = 0,$ $\textrm{Cov} [\W(t)] = t\Q,$ $t\geq 0.$ The space $\mathcal{H}_0:=\Q^{1/2}\mathcal{H}$ is a Hilbert space equipped with the inner product $(\cdot, \cdot)_0$, $$(\u, \v)_0 =\sum_{k=1}^{\infty}\frac{1}{\mu_k}(\u,\boldq_k)(\v,\boldq_k)= (\Q^{-1/2}\u, \Q^{-1/2}\v),\ \text{ for all } \ \u, \v\in \mathcal{H}_0,$$ where $\Q^{-1/2}$ is the pseudo-inverse of $\Q^{1/2}$.

Let $\mathcal{L}(\H)$ denote the space of all bounded linear operators on $\H$ and $\mathcal{L}_{\Q}:=\mathcal{L}_{\Q}(\H)$ represent the space of all Hilbert-Schmidt operators from $\H_0:=\Q^{1/2}\H$ to $\H$.  Since $\Q$ is a trace class operator, the embedding of $\H_0$ in $\H$ is Hilbert-Schmidt and the space $\mathcal{L}_{\Q}$ is a Hilbert space equipped with the norm $ \left\|\Phi\right\|^2_{\mathcal{L}_{\Q}}=\Tr\left(\Phi {\Q}\Phi^*\right)=\sum\limits_{k=1}^{\infty}\| {\Q}^{1/2}\Phi^*\boldq1_k\|_{\H}^2 =\sum\limits_{k=1}^{\infty}\| {\Q}^{1/2}\Phi^*\bolde_k\|_{\H}^2 $ and inner product $ \left(\Phi,\Psi\right)_{\mathcal{L}_{\Q}}=\Tr\left(\Phi {\Q}\Psi^*\right)=\sum\limits_{k=1}^{\infty}\left({\Q}^{1/2}\Psi^*\bolde_k,{\Q}^{1/2}\Phi^*\bolde_k\right) $. For more details, the interested readers are referred to see \cite{DaZ}.

\begin{hypothesis}\label{hyp}
	The noise coefficient $\Phi(\cdot,\cdot)$ satisfies: 
	\begin{itemize}
		\item [(H.1)] The function $\Phi\in\C([0,T]\times(\V\cap\wi\L^{r+1});\mathcal{L}_{\Q}(\H))$.
		\item[(H.2)]  (Growth condition)
		There exists a positive
		constant $K$ such that for all $t\in[0,T]$ and $\u\in\H$,
		\begin{equation}\label{Phi-Growth-condition}
		\|\Phi(t, \u)\|^{2}_{\mathcal{L}_{\Q}} 	\leq K\left(1 +\|\u\|_{\H}^{2}\right),
		\end{equation}
		
		\item[(H.3)]  (Lipschitz condition)
		There exists a positive constant $L$ such that for any $t\in[0,T]$ and all $\u_1,\u_2\in\H$,
		\begin{align}\label{Phi-Lipschitz-condition}
		\|\Phi(t,\u_1) - \Phi(t,\u_2)\|^2_{\mathcal{L}_{\Q}}\leq L\|\u_1 -	\u_2\|_{\H}^2.
		\end{align}
	\end{itemize}
\end{hypothesis}

\subsection{Abstract formulation of the stochastic system}\label{sec2.4}
On  taking orthogonal projection $\mathcal{P}$ onto the first equation in \eqref{31}, we get 
\begin{equation}\label{32}
\left\{
\begin{aligned}
\d\u(t)+[\mu \A\u(t)+\B(\u(t)) + \alpha\u(t) +\beta\mathcal{C}(\u(t))]\d t&=\Phi(t,\u(t))\d\W(t), \ t\in(0,T),\\
\u(0)&=\u_0.
\end{aligned}
\right.
\end{equation}
%where $\u_0\in\mathrm{L}^2(\Omega;\H)$.
 Strictly speaking one should write $\mathcal{P}\Phi$ instead of $\Phi$.

Let us now provide the definition of a unique global strong solution in the probabilistic sense to the system (\ref{32}).
\begin{definition}[Global strong solution]\label{def-PSS}
	Let $\u_0\in\mathrm{L}^{4+\eta}(\Omega;\H)$ for some $\eta>0$ be given. 
	An $\H$-valued $(\mathscr{F}_t)_{t\geq 0}$-adapted stochastic process $\u(\cdot)$ is called a \emph{strong solution} to the system (\ref{32}) if the following conditions are satisfied: 
	\begin{enumerate}
		\item [(i)] the process $\u\in\mathrm{L}^{4+\eta}(\Omega;\mathrm{L}^{\infty}(0,T;\H))\cap\mathrm{L}^{2}(\Omega;\mathrm{L}^2(0,T;\V))\cap\mathrm{L}^{r+1}(\Omega;\mathrm{L}^{r+1}(0,T;\widetilde{\L}^{r+1}))$ and $\u(\cdot)$ has a $\V\cap\widetilde{\L}^{r+1}$-valued  modification, which is progressively measurable with continuous paths in $\H$ and $\u\in\C([0,T];\H)\cap\mathrm{L}^2(0,T;\V)\cap\mathrm{L}^{r+1}(0,T;\widetilde{\L}^{r+1})$, $\mathbb{P}$-a.s.,
		\item [(ii)] the following equality holds for every $t\in [0, T ]$, as an element of $\V'+\wi\L^{\frac{r+1}{r}},$ $\mathbb{P}$-a.s.
		\begin{align}\label{4.4}
		\u(t)&=\u_0-\int_0^t\left[\mu \A\u(s)+\B(\u(s))+ \alpha\u(s)+\beta\mathcal{C}(\u(s))\right]\d s+\int_0^t\Phi(s,\u(s))\d \W(s).
		\end{align}
			\item [(iii)] the following It\^o formula holds true: 
				\begin{align}\label{eqn-Ito-formula}
			&	\|\u(t)\|_{\H}^2+2\mu \int_0^t\|\nabla\u(s)\|_{\H}^2\d s + 2\alpha \int_0^t\|\u(s)\|_{\H}^2\d s +2\beta\int_0^t\|\u(s)\|_{\widetilde{\L}^{r+1}}^{r+1}\d s\nonumber\\&= \|{\u_0}\|_{\H}^2+\int_0^t\|\Phi(s,\u(s))\|_{\mathcal{L}_{\Q}}^2\d s+2\int_0^t(\Phi(s,\u(s))\d\W(s),\u(s)),
			\end{align}
			for all $t\in(0,T)$, $\mathbb{P}$-a.s.
	\end{enumerate}
\end{definition}
An alternative version of condition (\ref{4.4}) is to require that for any  $\v\in\V\cap\widetilde{\L}^{r+1}$:
\begin{align}\label{4.5}
(\u(t),\v)&=(\u_0,\v)-\int_0^t\langle\mu \A\u(s)+\B(\u(s)) + \alpha\u(s) +\beta\mathcal{C}(\u(s)),\v\rangle\d s\no\\&\quad+\int_0^t\left(\Phi(s,\u(s))\d \W(s),\v\right),\ \mathbb{P}\text{-a.s.}
\end{align}	
\begin{definition}
	A strong solution $\u(\cdot)$ to (\ref{32}) is called a
	\emph{pathwise  unique strong solution} if
	$\widetilde{\u}(\cdot)$ is an another strong
	solution, then $$\mathbb{P}\Big\{\omega\in\Omega:\u(t)=\widetilde{\u}(t),\ \text{ for all }\ t\in[0,T]\Big\}=1.$$ 
\end{definition}

%As we are using the approach introduced in \cite{CLF}, we replace $\mathcal{V}_T$ with the space ${\mathcal{V}}_{\infty}$ consisting of finite combinations of eigenfunctions of the operator $\mathfrak{L}$. We define 
%\begin{align}\label{eqn-vinfty}
%	{\mathcal{V}}_{\infty}:=\left\{\v: \v=\sum_{k=1}^N\alpha_k(t)\bolde_k(x):\alpha_k\in\C_0^1([0,\infty)), \ \text{ for some } \ N\in\mathbb{N}\right\},
%\end{align}
%where $\{\bolde_k\}_{k=1}^{\infty}$ are the eigenfunctions of the operator $\mathcal{L}$ (see \eqref{L3}).

\subsection{Energy estimates} In this subsection, we formulate a finite-dimensional system and establish some a-priori energy estimates. %Let $\{e_1,\ldots,e_n,\ldots\}$ be a complete orthonormal system in $\H$ belonging to $\V$ and let $\H_n$ be the $\mathrm{span}\{e_1,\ldots,e_n\}$. Let $\mathrm{P}_n$ denote the orthogonal projection of $\V'$ to $\H_n$, that is, $\mathrm{P}_n\x=\sum_{i=1}^n\langle \x,e_i\rangle e_i$. Since every element $\x\in\H$ induces a functional $\x'\in\H$  by the formula $\langle \x',\y\rangle =(\x,\y)$, $\y\in\V$, then $\mathrm{P}_n\big|_{\H}$, the orthogonal projection of $\H$ onto $\H_n$  is given by $\mathrm{P}_n\x=\sum_{i=1}^n\left(\x,e_i\right)e_i$. Hence in particular, $\mathrm{P}_n$ is the orthogonal projection from $\H$ onto $\mathrm{span}\{e_1,\ldots,e_n\}$.  
We define $\B^n(\u^n)=\mathrm{P}_n\B(\u^n)$, $\mathcal{C}^n(\u^n)=\mathrm{P}_n\mathcal{C}(\u^n)$,  $\Phi^n(\cdot,\u^n)=\mathrm{P}_n\Phi(\cdot,\u^n)$, and $\W^n=\mathrm{P}_n\W$, where $\mathrm{P}_n$ is the projection operator from $\mathbb{U}^{\prime}$ to $\H_n$ defined in Subsection \ref{C_O}. For each $n\in\N$, we search for approximate solutions of the form 
\begin{align}
	\u^n(x,t)=\sum_{k=1}^n g^n_k(t)\bolde_k(x), \;\; \bolde_k\in\H_n,
\end{align} 
where $\{\bolde_k\}_{k\in\N}$ is defined in \eqref{L3} and  the coefficients $g^n_1,\ldots,g^n_n$ are solutions of the following $n$ stochastic ordinary differential equations:%We consider the following system of stochastic ODEs:
\begin{equation}\label{4.7}
\left\{
\begin{aligned}
\d\left(\u^n(t),\bolde_j\right)&=-\langle\mu \A\u^n(t)+\B^n(\u^n(t))+ \alpha\u^n(t)+\beta\mathcal{C}^n(\u^n(t)),\bolde_j\rangle\d t
\\ & \quad +\left(\Phi^n(t,\u^n(t))\d \W^n(t),\bolde_j\right),\\
(\u^n(0),\bolde_j)&=(\u_0^n,\bolde_j),
\end{aligned}
\right.
\end{equation}
for a.e. $t\in[0,T]$ and $j=1,\ldots,n$, where $\u_0^n=\mathrm{P}_n\u_0$. Since $\B^n(\cdot)$ and $\mathcal{C}^n(\cdot)$ are  locally Lipschitz (see \eqref{lip} and \eqref{213}), and  $\Phi^n(\cdot,\cdot)$   is globally Lipschitz (see \eqref{Phi-Lipschitz-condition}), the system (\ref{4.7}) has a unique $\H_n$-valued local strong solution $\u^n(\cdot)$ and $\u^n\in\mathrm{L}^2(\Omega;\mathrm{L}^{\infty}(0,T^*;\H_n))$ with $\mathscr{F}_t$-adapted continuous sample paths (see \cite{chow}).  Now we discuss the a-priori energy estimates satisfied by the system \eqref{4.7} and extend the time $T^*$ to $T$. Note that the energy estimates established in the next proposition is true for $r\geq 1$.

\begin{proposition}[Energy estimates]\label{prop-EE2}
	Let $\u^n(\cdot)$ be the unique solution of the system of stochastic
	ODEs (\ref{4.7}) with $\u_0\in\mathrm{L}^{2p}(\Omega;\H)$ with $p\geq1$. Then, under Hypothesis \ref{hyp}, we have 
	\begin{align}\label{5.5}
		&\E \left[   \sup_{t\in[0,T]} \|\u^n(t)\|^{2p}_{\H}+ 4p\int_{0}^{T}\|\u^n(s)\|^{2p-2}\left(\mu\|\nabla\u^n(s)\|^2_{\H}+\alpha\|\u^n(s)\|^2_{\H}+\beta\|\u^n(s)\|^{r+1}_{\wi\L^{r+1}}\right)\d s\right]\nonumber\\&
		\leq C \left( \E \left[\|\u_0\|_{\H}^{2p}\right]+T\right)e^{CT},
	\end{align}
	where $C$ is a positive constant.
\end{proposition}
\begin{proof}
		\noindent\textbf{Step (1):} Let us first define a sequence of stopping times $\tau_N$ by
	\begin{align}\label{stopm}
		\tau_N:=\inf_{t\geq 0}\left\{t:\|\u^n(t)\|_{\H}\geq N\right\},
	\end{align}
	for $N\in\mathbb{N}$. Applying the finite dimensional It\^{o} formula to the process
	$\|\u^n(\cdot)\|_{\H}^2$, we obtain 
	\begin{align}\label{4.9}
		&	\|\u^n(\t)\|_{\H}^2
		\nonumber\\ &=
		\|\u^n(0)\|_{\H}^2-2\int_0^{\t}\langle\mu \A\u^n(s)+\B^n(\u^n(s))+\alpha\u^n(s)+\beta\mathcal{C}^n(\u^n(s)),\u^n(s)\rangle\d s
		\nonumber\\&\quad+\int_0^{\t}\|\Phi^n(s,\u^n(s))\|^2_{\mathcal{L}_{\Q}}\d
		s 
		+2\int_0^{\t}\left(\Phi^n(s,\u^n(s))\d\W^n(s),\u^n(s)\right).
	\end{align}
	Note that $\langle\B^n(\u^n),\u^n\rangle=\langle\B(\u^n),\u^n\rangle=0$. 
	Taking expectation in (\ref{4.9}), and using the fact that final term in the right hand side of the equality (\ref{4.9}) is a martingale with zero expectation, we find
	\begin{align}\label{4.13}
		&\E\left[\|\u^n(\t)\|_{\H}^2+2\mu \int_0^{\t}\hspace{-3mm}\|\nabla\u^n(s)\|_{\H}^2\d s+2\alpha \int_0^{\t}\hspace{-3mm}\|\u^n(s)\|_{\H}^2\d s+2\beta\int_0^{\t}\hspace{-3mm}\|\u^n(s)\|_{\widetilde{\L}^{r+1}}^{r+1}\d s\right]\nonumber\\&\leq
		\E\left[	\|\u^n(0)\|_{\H}^2\right]+\E\left[\int_0^{\t}\|\Phi^n(s,\u^n(s))\|^2_{\mathcal{L}_{\Q}}\d
		s \right]\nonumber\\&\leq \E\left[	\|\u_0\|_{\H}^2\right]+K\E\left[\int_0^{\t}\left[1+\|\u^n(s)\|^2_{\H}\right]\d
		s \right]
		\nonumber\\&\leq \E\left[	\|\u_0\|_{\H}^2\right]+ KT + K\int_0^{t}\E\left[\|\u^n(s\land \tau_N)\|^2_{\H} \right]\d
		s,
	\end{align}
	where we have used Hypothesis \ref{hyp}  (H.2). Applying Gr\"onwall's inequality in (\ref{4.13}), we get 
	\begin{align}\label{2.7}
		\E\left[\|\u^n(\t)\|_{\H}^2\right]\leq
		\left(\E\left[\|\u_0\|_{\H}^2\right]+KT\right)e^{KT},
	\end{align}
	for all $t\in[0,T]$.  Note that for the indicator function $\chi$, 
	$$\E\left[{\chi}_{\{\tau_N^n<t\}}\right]=\mathbb{P}\Big\{\omega\in\Omega:\tau_N^n(\omega)<t\Big\},$$
	and using \eqref{stopm}, we obtain 
	\begin{align}\label{sq11}
		\E\left[\|\u^n(\t)\|_{\H}^2\right]&=
		\E\left[\|\u^n(\tau_N^n)\|_{\H}^2{\chi}_{\{\tau_N^n<t\}}\right]+\E\left[\|\u^n(t)\|_{\H}^2
		{\chi}_{\{\tau_N^n\geq t\}}\right]\nonumber\\&\geq
		\E\left[\|\u^n(\tau_N^n)\|_{\H}^2{\chi}_{\{\tau_N^n<t\}}\right]\geq
		N^2\mathbb{P}\Big\{\omega\in\Omega:\tau_N^n<t\Big\}.
	\end{align}
	Using the energy estimate (\ref{2.7}), we find 
	\begin{align}\label{sq12}
		\mathbb{P}\Big\{\omega\in\Omega:\tau_N<t\Big\}&\leq
		\frac{1}{N^2}\E\left[\|\u^n(\t)\|_{\H}^2\right]\leq
		\frac{1}{N^2}
		\left(\E\left[\|\u_0\|_{\H}^2\right]+KT\right)e^{KT}.
	\end{align}
	Hence, we have
	\begin{align}\label{sq13}
		\lim_{N\to\infty}\mathbb{P}\Big\{\omega\in\Omega:\tau_N<t\Big\}=0, \ \textrm{
			for all }\ t\in [0,T],
	\end{align}
	and $\t\to t$ as $N\to\infty$. 
	Taking limit $N\to\infty$ in
	(\ref{2.7}) and using the \emph{monotone convergence theorem}, we get 
	\begin{align}\label{4.16}
		\E\left[\|\u^n(t)\|_{\H}^2\right]\leq
		\left(\E\left[\|\u_0\|_{\H}^2\right]+KT\right)e^{KT},
	\end{align}
	for $0\leq t\leq T$. Substituting (\ref{4.16}) in (\ref{4.13}), we arrive at
	\begin{align}\label{4.16a}
		&	\E\left[\|\u^n(t)\|_{\H}^2+2\mu \int_0^{t}\|\nabla\u^n(s)\|_{\H}^2\d s +2\alpha \int_0^{t}\|\u^n(s)\|_{\H}^2\d s +2\beta\int_0^t\|\u^n(s)\|_{\widetilde{\L}^{r+1}}^{r+1}\d s\right]
		\nonumber\\ & \leq
		\left(\E\left[\|\u_0\|_{\H}^2\right]+KT\right)e^{2KT},
	\end{align}
	for all $t\in[0,T]$. 
	\vskip 0.2cm
	\noindent\textbf{Step (2):} Let us now prove \eqref{5.5}.	Applying finite dimensional It\^o's formula to the process $\|\u^n(\cdot)\|^{2p}_{\H}$, we obtain
	\begin{align}\label{S4}
		&\|\u^n(\t)\|^{2p}_{\H}+ 2p\int_{0}^{\t}\|\u^n(s)\|^{2p-2}\left(\mu\|\nabla\u^n(s)\|^2_{\H}+\alpha\|\u^n(s)\|^2_{\H}+\beta\|\u^n(s)\|^{r+1}_{\wi\L^{r+1}}\right)\d s\nonumber\\&
		=\underbrace{ \|\P_n\u_0\|^{2p}_{\H}}_{\leq \|\u_0\|^{2p}_{\H}} +  p \int_{0}^{\t}\|\u^n( s)\|^{2p-2}\|\Phi^n(s,\u^n(s))\|^2_{\mathcal{L}_{\Q}} \d s
		\nonumber\\ & \quad +  2 p\int_{0}^{\t}\|\u^n( s)\|^{2p-2}\left(\Phi^n(s,\u^n(s))\d\W^n(s) , \u^n( s)\right) \nonumber\\&\quad+ 2 p(p-1) \int_{0}^{\t}\|\u^n( s)\|^{2p-4}_{\H} \|(\Phi^n(s,\u^n(s)))^{*}\u^n( s)\|^2_{\H}\d s.
		%\nonumber\\&\leq \|\u_0\|^{2p}_{\H}+\sum_{i=1}^{3}Q_{i}(t,n),
	\end{align}
	 Taking supremum from $0$
	to $\T$ before taking expectation in \eqref{S4}, we obtain
	\begin{align}\label{S44}
		&\E \left[ \sup_{t\in[0,\T]} \|\u^n(t)\|^{2p}_{\H}+ 2p\int_{0}^{\T}\|\u^n(s)\|_{\H}^{2p-2}\left(\mu\|\nabla\u^n(s)\|^2_{\H}+\alpha\|\u^n(s)\|^2_{\H}+\beta\|\u^n(s)\|^{r+1}_{\wi\L^{r+1}}\right)\d s\right]\nonumber\\&
		\leq \E \left[\|\u_0\|^{2p}_{\H}\right] +  p \E \left[\int_{0}^{\T}\|\u^n( s)\|_{\H}^{2p-2}\|\Phi^n(s,\u^n(s))\|^2_{\mathcal{L}_{\Q}} \d s\right]
		 \nonumber\\&\quad+ 2 p(p-1) \E \left[\int_{0}^{\T}\|\u^n( s)\|^{2p-4}_{\H} \|(\Phi^n(s,\u^n(s)))^{*}\u^n( s)\|^2_{\H}\d s\right]
		 \nonumber\\ & \quad +  2 p \E\left[ \sup_{t\in[0,\T]} \left|\int_{0}^{t}\|\u^n( s)\|_{\H}^{2p-2}\left(\Phi^n(s,\u^n(s))\d\W^n(s) , \u^n( s)\right)\right| \right]
		 \nonumber\\ & =   \E \left[\|\u_0\|^{2p}_{\H}\right] +\sum_{i=1}^{3}Q_{i}(t,n).
	\end{align}
	Let us now estimate each term of \eqref{S44} separately. Using Hypothesis \ref{hyp} (H.2), we get 
	\begin{align}\label{S5}
		| Q_1(t,n) + Q_2(t,n) | \leq C \E \left[ \int_{0}^{\T} \left\{ 1+ \|\u^n( s)\|_{\H}^{2p}  \right\}\d s \right].
	\end{align}
	Using Burkholder-Davis-Gundy (see \cite[Theorem 1]{BD} for the Burkholder-Davis-Gundy inequality (BDG) and \cite[Theorem 1.1]{DLB} for the best constant), Hypothesis \ref{hyp} (H.2), and H\"{o}lder's  and Young's inequalities, we have
	\begin{align}\label{S55}
		\left|Q_3(s,n)\right|  
		& \leq C \mathbb{E}\left[\left(\int_{0}^{\T}\|\u^n( s)\|^{4p-4}_{\H} \|(\Phi^n(s,\u^n(s)))^{*}\u^n( s)\|^2_{\H}\d s\right)^{\frac{1}{2}}\right]
		\nonumber\\ & \leq C \mathbb{E}\left[\left(\int_{0}^{\T}\|\u^n( s)\|^{4p-2}_{\H} \|(\Phi^n(s,\u^n(s)))^{*}\|^2_{\mathcal{L}_{\Q}}\d s\right)^{\frac{1}{2}}\right] 
		\nonumber\\ & \leq C \mathbb{E}\left[\left(\int_{0}^{\T}\|\u^n( s)\|^{4p-2}_{\H} \left\{1+ \|\u^n( s)\|^{2}_{\H}\right\}\d s\right)^{\frac{1}{2}}\right] 
		\nonumber\\ & \leq C \mathbb{E}\left[\sup_{t\in[0,\T]}\|\u^n(t)\|^{p}_{\H} \left(\int_{0}^{\T}\|\u^n( s)\|^{2p-2}_{\H} \left\{1+ \|\u^n( s)\|^{2}_{\H}\right\}\d s\right)^{\frac{1}{2}}\right]
		\nonumber\\ & \leq  \mathbb{E}\left[\frac12\sup_{t\in[0,\T]}\|\u^n(t)\|^{2p}_{\H} \right] + C \mathbb{E}\left[ \int_{0}^{\T} \left\{1+ \|\u^n( s)\|^{2p}_{\H}\right\}\d s \right].
	\end{align}
	By combining \eqref{S44}-\eqref{S55}, we reach at
	\begin{align}\label{S66}
		&\E \left[ \frac12 \sup_{t\in[0,\T]} \|\u^n(t)\|^{2p}_{\H}+ 2p\int_{0}^{\T} \hspace{-6mm} \|\u^n(s)\|_{\H}^{2p-2}\left(\mu\|\nabla\u^n(s)\|^2_{\H}+\alpha\|\u^n(s)\|^2_{\H}+\beta\|\u^n(s)\|^{r+1}_{\wi\L^{r+1}}\right)\d s\right]\nonumber\\&
		\leq \E \left[\|\u_0\|^{2p}_{\H}\right] + C \mathbb{E}\left[ \int_{0}^{\T} \left\{1+ \|\u^n( s)\|^{2p}_{\H}\right\}\d s \right].
	\end{align}
Applying Gr\"onwall's inequality in \eqref{S66}, we obtain 
\begin{align}\label{4.25}
	\E\left[\sup_{t\in[0,\T]}\|\u^n(t)\|_{\H}^{2p}\right]\leq\left(2 \E \left[\|\u_0\|_{\H}^{2p}\right]+CT\right)e^{CT}.
\end{align}
Substituting \eqref{4.25} in \eqref{S66} and passing $N\to\infty$, using the monotone convergence theorem, we finally obtain \eqref{5.5}. 
\end{proof}

\begin{lemma}\label{lem3.6}
	For $r>3$ and any functions $$\u,\v\in\mathrm{L}^2(\Omega;\mathrm{L}^{\infty}(0,T;\H)\cap\mathrm{L}^2(0,T;\V))\cap\mathrm{L}^{r+1}(\Omega;\mathrm{L}^{r+1}(0,T;\widetilde{\L}^{r+1})),$$  we have
	\begin{align}\label{3.11y}
	&\int_0^Te^{-\varrho t}\Big[\mu \langle \A(\u(t)-\v(t)),\u(t)-\v(t)\rangle +\langle \B(\u(t))-\B(\v(t)),\u(t)-\v(t)\rangle \nonumber\\&\quad+\beta\langle\mathcal{C}(\u(t))-\mathcal{C}(\v(t)),\u(t)-\v(t)\rangle \Big]\d t +\left(\alpha+\varrho+\frac{L}{2}\right)\int_0^Te^{-\varrho t}\|\u(t)-\v(t)\|_{\H}^2\d t\nonumber\\&\geq \frac{1}{2}\int_0^Te^{-\varrho t}\|\Phi(t, \u(t)) - \Phi(t,
	\v(t))\|^2_{\mathcal{L}_{\Q}}\d t,
	\end{align}
	where $\varrho$ is defined in \eqref{215} and $L$ is same as appearing in Hypothesis \ref{hyp} (H.3). 
\end{lemma}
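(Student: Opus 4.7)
The plan is to combine the deterministic monotonicity estimate from Theorem \ref{thm2.2} with the Lipschitz hypothesis (H.3) pointwise in time, and then integrate against the nonnegative weight $e^{-\eta t}$.

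First I would note that since $\u,\v\in\mathrm{L}^2(0,T;\V)\cap\mathrm{L}^{r+1}(0,T;\widetilde{\L}^{r+1})$ for $\mathbb{P}$-a.s., for almost every $t\in(0,T)$ we have $\u(t),\v(t)\in\V\cap\widetilde{\L}^{r+1}$. Hence the hypotheses of Theorem \ref{thm2.2} apply pointwise in $t$, yielding
\begin{align*}
&\mu\langle\A(\u(t)-\v(t)),\u(t)-\v(t)\rangle+\langle\B(\u(t))-\B(\v(t)),\u(t)-\v(t)\rangle\\
&\quad+\beta\langle\mathcal{C}(\u(t))-\mathcal{C}(\v(t)),\u(t)-\v(t)\rangle+\eta\|\u(t)-\v(t)\|_{\H}^2\geq 0,
\end{align*}
with $\eta$ as in \eqref{215}.

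Next I would invoke the Lipschitz growth condition (H.3) in Hypothesis \ref{hyp} to obtain, pointwise in $t$,
\begin{equation*}
\frac{1}{2}\|\Phi(t,\u(t))-\Phi(t,\v(t))\|_{\mathcal{L}_{\Q}}^2\leq\frac{L}{2}\|\u(t)-\v(t)\|_{\H}^2.
\end{equation*}
Adding $(L/2)\|\u(t)-\v(t)\|_{\H}^2$ to both sides of the monotonicity estimate and rearranging then gives
\begin{align*}
&\mu\langle\A(\u(t)-\v(t)),\u(t)-\v(t)\rangle+\langle\B(\u(t))-\B(\v(t)),\u(t)-\v(t)\rangle\\
&\quad+\beta\langle\mathcal{C}(\u(t))-\mathcal{C}(\v(t)),\u(t)-\v(t)\rangle+\left(\eta+\frac{L}{2}\right)\|\u(t)-\v(t)\|_{\H}^2\\
&\geq\frac{1}{2}\|\Phi(t,\u(t))-\Phi(t,\v(t))\|_{\mathcal{L}_{\Q}}^2.
\end{align*}

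Finally, since $e^{-\eta t}\geq 0$ on $[0,T]$, multiplying the above pointwise inequality by $e^{-\eta t}$ and integrating over $(0,T)$ preserves the inequality and yields \eqref{3.11y}. The integrability of each term is guaranteed by the assumed regularity: the $\A$ term is controlled in $\mathrm{L}^1$ by $\|\u-\v\|_{\mathrm{L}^2(0,T;\V)}^2$, the $\B$ term by the interpolation bound \eqref{lip} together with the $\mathrm{L}^{\infty}(0,T;\H)\cap\mathrm{L}^{r+1}(0,T;\widetilde{\L}^{r+1})$ regularity, the $\mathcal{C}$ term by \eqref{213}, and the noise term by (H.2)--(H.3). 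I do not anticipate any real obstacle here; the result is essentially a direct synthesis of Theorem \ref{thm2.2} and Hypothesis \ref{hyp}(H.3), tailored for use in the stochastic Minty--Browder argument in the existence proof.
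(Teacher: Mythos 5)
Your proposal is correct and follows essentially the same route as the paper: the paper's proof is precisely to multiply the monotonicity estimate \eqref{fe} of Theorem \ref{thm2.2} by $e^{-\eta t}$, integrate over $(0,T)$, and invoke Hypothesis \ref{hyp} (H.3) to bound the noise term. Your additional remarks on the pointwise applicability of Theorem \ref{thm2.2} and the integrability of each term are fine but not needed beyond what the paper records.
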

\begin{proof}
	Multiplying (\ref{fe}) with $e^{-\varrho t}$, integrating over time $t\in(0,T)$, and using  Hypothesis \ref{hyp} (H.3), we obtain (\ref{3.11y}). 
\end{proof}

\subsection{Existence and uniqueness of probabilistic strong solution} Our next aim is to show that the system (\ref{32}) has a unique global strong solution by exploiting the monotonicity property (see (\ref{3.11y})) and a stochastic generalization of the Minty-Browder technique. The local monotonicity property of the linear and nonlinear operators and a stochastic generalization of the Minty-Browder technique has been used to obtain solvability results in the works \cite{ICAM,MJSS,MTM,MTM6,SSSP}, etc.

Let us recall a few things form \cite{Bogachev_2007} which will be used in the sequel.

\begin{definition}[{\cite[Definition 4.5.1]{Bogachev_2007}}]
	A set of functions $\mathcal{F} \subset \mathrm{L}^1(\Omega,\mathscr{F}, \mathbb{P})$ is called uniformly integrable if 
	\begin{align}
		\lim\limits_{M\to + \infty} \sup_{f\in\mathcal{F}} \int_{\{|f|>M\}} |f| \d \mathbb{P} =0,
	\end{align}
	that is, for every $\varepsilon>0$ there exists $M_\varepsilon >0$ such that 
	\begin{align}
		\sup_{f\in\mathcal{F}} \int_{\{|f| \geq M_\varepsilon\}} |f| \d \mathbb{P}  < \varepsilon.
	\end{align}
\end{definition}

\begin{theorem}[Lebesgue-Vitali theorem, {\cite[Theorem 4.5.4]{Bogachev_2007}}]\label{L-Vthm}
	%Let $(\Omega,\mathscr{F}, \mathbb{P})$  be a measure space with $\mathbb{P}(\Omega)<+\infty$. 
	Let $\{f_n\}_{n\in\N} \subset \mathrm{L}^p(\Omega,\mathscr{F}, \mathbb{P})$ and $f$ be an $\mathscr{F}$-measurable function. Then, the following are equivalent:
	\begin{enumerate}
		\item $f \in \mathrm{L}^p(\Omega,\mathscr{F}, \mathbb{P})$ and $\{f_n\}_{n\in\N}$ converges to $f$ in $\mathrm{L}^p(\Omega,\mathscr{F}, \mathbb{P})$;
		\item The sequence of  functions $\{f_n\}_{n\in\N}$ converges in $\mathbb{P}$-measure to $f$ and $\{|f_n|^p\}_{n\in\N}$ is uniformly integrable. 
	\end{enumerate}
\end{theorem}

\begin{remark}\label{rem-uniform-integable}
	From \cite[Page 31]{Billingsley_1999}, a simple condition for uniform integrability is that 
	\begin{align*}
		\sup_n\mathbb{E}\left[|f_n|^{1+\varepsilon}\right]<\infty,
	\end{align*}
	for some $\varepsilon>0$, in this case, an application of H\"older's and Markov's inequalities yield
	\begin{align*}
		 \int_{\{|f_n|>M\}} |f_n| \d \mathbb{P} \leq \frac{1}{M^{\varepsilon}}\mathbb{E}\left[|f_n|^{1+\varepsilon}\right]. 
	\end{align*}
\end{remark}

\begin{theorem}\label{exis}
%Let $\mathcal{O}\subset\R^d$, $2\leq d\leq 4$ be a periodic domain.		
For   $r> 3$, let $\u_0\in \mathrm{L}^{4+\eta}(\Omega;\H)$, for some $\eta>0$ be given.  Then under Hypothesis \ref{hyp}, there exists a \emph{pathwise unique strong solution}
	$\u(\cdot)$ to the system (\ref{32}) in the sense of Definition \ref{def-PSS} such that 
	\begin{align*}
		\u\in\mathrm{L}^{4+\eta}(\Omega;\mathrm{L}^{\infty}(0,T;\H))\cap\mathrm{L}^{2}(\Omega;\mathrm{L}^2(0,T;\V))\cap\mathrm{L}^{r+1}(\Omega;\mathrm{L}^{r+1}(0,T;\widetilde{\L}^{r+1})),
	\end{align*} with $\mathbb{P}$-a.s., continuous trajectories in $\H$.
\end{theorem}
\begin{proof}
	The global solvability results of the system (\ref{32}) is divided into the following steps.
	
	\vskip 0.2cm
	\noindent\textbf{Step (1):} \emph{Finite-dimensional (Galerkin) approximation of the system (\ref{32}):} Let us first consider the following Galerkin approximated It\^{o} stochastic differential equation satisfied by $\{\u^n(\cdot)\}$ in $\H_n$:
	\begin{equation}\label{4.37}
	\left\{
	\begin{aligned}
	\d\u^n(t)&=-\G(\u^n(t))\d
	t+\Phi^n(t,\u^n(t))\d\W^n(t),\\
	\u^n(0)&=\u_0^n,
	\end{aligned}
	\right.
	\end{equation}
	where
	$\G(\u^n(\cdot))=\mu \A\u^n(\cdot)+\B^n\left(\u^n(\cdot)\right)+\alpha\u^n(\cdot)+\beta\mathcal{C}^n(\u^n(\cdot))$. Applying It\^o's formula to the finite dimensional process $e^{-2\varrho t}\|\u^n(\cdot)\|_{\H}^2$, we obtain 
	\begin{align}\label{4.38}
	e^{-2\varrho t}\|\u^n(t)\|_{\H}^2&=\|\u^n(0)\|_{\H}^2-\int_0^te^{-2\varrho s}\langle 2\G(\u^n(s))+2\varrho \u^n(s),\u^n(s)\rangle\d
	s\nonumber\\&\quad+\int_0^te^{-2\varrho s}\|\Phi^n(s,\u^n(s))\|_{\mathcal{L}_{\Q}}^2\d
	s+2\int_0^te^{-2\varrho s}\left(\Phi^n(s,\u^n(s))\d\W^n(s),\u^n(s)\right),
		\end{align}
	for all $t\in[0,T]$. Note that the final term from the right hand side of the equality (\ref{4.38}) is a martingale and on taking expectation, we get 
	\begin{align}\label{4.39}
	\E\left[e^{-2\varrho t}\|\u^n(t)\|_{\H}^2\right]&=\E\left[\|\u^n(0)\|_{\H}^2\right]-\E\left[\int_0^te^{-2\varrho s}\langle2\G(\u^n(s))+2\varrho\u^n(s),\u^n(s)\rangle\d
	s\right]\nonumber\\&\quad+\E\left[\int_0^t
	e^{-2\varrho s}\|\Phi^n(s,\u^n(s))\|_{\mathcal{L}_{\Q}}^2\d s\right],
	\end{align}
	for all $t\in[0,T]$.
	
	\vskip 0.2cm
	\noindent\textbf{Step (2):} \emph{Weak convergence of the
		sequences $\u^n(\cdot)$, $\G(\u^n(\cdot))$ and
		$\Phi^n(\cdot,\cdot)$.} 
	We point out  that 	$\mathrm{L}^{4+\eta}\left(\Omega;\mathrm{L}^{\infty}(0,T;\H)\right)\cong	(\mathrm{L}^{\frac{4+\eta}{3+\eta}}\left(\Omega;\mathrm{L}^1(0,T;\H)\right)'$ and  the space $\mathrm{L}^{2}\left(\Omega;\mathrm{L}^1(0,T;\H)\right)$ is separable. Moreover,  the spaces $\mathrm{L}^2(\Omega;\mathrm{L}^2(0,T;\V))$ and $\mathrm{L}^{r+1}(\Omega;\mathrm{L}^{r+1}(0,T;\widetilde{\L}^{r+1}))$ are  reflexive ($\X''=\X$).  From the energy estimate \eqref{5.5} given in Proposition \ref{prop-EE2} (for $p=1$), we know that the sequence $\{\u^n(\cdot)\}_{n\in\N}$ is bounded in the spaces $\mathrm{L}^{4+\eta}\left(\Omega;\mathrm{L}^{\infty}(0,T;\H)\right)$, $\mathrm{L}^2(\Omega;\mathrm{L}^2(0,T;\V))$ and $\mathrm{L}^{r+1}(\Omega;\mathrm{L}^{r+1}(0,T;\widetilde{\L}^{r+1}))$. Then applying the	Banach-Alaoglu theorem, we can extract a subsequence	$\{\u^{n_k}\}_{k\in\N}$ of $\{\u^n\}_{n\in\N}$ such that	(for simplicity, we denote by the same index):
	\begin{equation}\label{4.40}
	\left\{
	\begin{aligned}
	\u^n&\xrightarrow{w^{*}} \u\textrm{ in
	}\mathrm{L}^{4+\eta}(\Omega;\mathrm{L}^{\infty}(0,T ;\H)),\\ 
	\u^n&\xrightarrow{w} \u\textrm{ in
	}\mathrm{L}^2(\Omega;\mathrm{L}^{2}(0,T ;\V)),\\  
\u^n&\xrightarrow{w} \u\textrm{ in
}\mathrm{L}^{r+1}(\Omega;\mathrm{L}^{r+1}(0,T ;\widetilde{\L}^{r+1})),\\
	\u^n(T)&\xrightarrow{w}\xi \in\mathrm{L}^2(\Omega;\H),\\
	\G(\u^n)&\xrightarrow{w} \G_0\textrm{ in
	}\mathrm{L}^2(\Omega;\mathrm{L}^2(0,T ;\V'))+\mathrm{L}^{\frac{r+1}{r}}(\Omega;\mathrm{L}^{\frac{r+1}{r}}(0,T;\widetilde{\L}^{\frac{r+1}{r}})).
	\end{aligned}
	\right.
	\end{equation}
Using H\"older's inequality, interpolation inequality (see \eqref{211})  and Proposition \ref{prop-EE2}, we justify the final convergence  in (\ref{4.5})  in the following way: 
	\begin{align}\label{4.41}
	&\E\left[\left|\int_0^T\langle\G(\u^n(t)),\v(t)\rangle\d t\right]\right|\nonumber\\&\leq \mu\E\left[ \left|\int_0^T(\nabla\u^n(t),\nabla\v(t))\d t\right|\right]+\E\left[\left|\int_0^T\langle \B(\u^n(t),\v(t)),\u^n(t)\rangle\d t\right|\right]\nonumber\\&\quad + \alpha\E\left[ \left|\int_0^T(\u^n(t),\v(t))\d t \right|\right] +\beta\E\left[\left|\int_0^T\langle|\u^n(t)|^{r-1}\u^n(t),\v(t)\rangle\d t\right|\right]\nonumber\\&\leq\mu\E\left[ \int_0^T\|\nabla\u^n(t)\|_{\H}\|\nabla\v(t)\|_{\H}\d t\right]+\E\left[\int_0^T\|\u^n(t)\|_{\widetilde{\L}^{r+1}}\|\u^n(t)\|_{\widetilde{\L}^{\frac{2(r+1)}{r-1}}}\|\v(t)\|_{\V}\d t\right]\nonumber\\&\quad + \alpha\E\left[ \int_0^T\|\u^n(t)\|_{\H}\|\v(t)\|_{\H}\d t\right] +\beta\E\left[\int_0^T\|\u^n(t)\|_{\widetilde{\L}^{r+1}}^{r}\|\v(t)\|_{\widetilde{\L}^{r+1}}\d t\right]\nonumber\\&\leq \mu \mathbb{E}\left[\left(\int_0^T\|\nabla\u^n(t)\|_{\H}^2\d t\right)^{1/2}\left(\int_0^T\|\nabla\v(t)\|_{\H}^2\d t\right)^{1/2}\right]\nonumber\\&\quad+\E\left[\left(\int_0^T\|\u^n(t)\|_{\widetilde{\L}^{r+1}}^{r+1}\d t\right)^{\frac{1}{r-1}}\left(\int_0^T\|\u^n(t)\|_{\H}^2\d t\right)^{\frac{r-3}{2(r-1)}}\left(\int_0^T\|\v(t)\|_{\V}^2\d t\right)^{1/2}\right]
	\nonumber\\&\quad + \alpha \mathbb{E}\left[\left(\int_0^T\|\u^n(t)\|_{\H}^2\d t\right)^{1/2}\left(\int_0^T\|\v(t)\|_{\H}^2\d t\right)^{1/2}\right] \nonumber\\&\quad+\beta\E\left[\left(\int_0^T\|\u^n(t)\|_{\widetilde{\L}^{r+1}}^{r+1}\d t\right)^{\frac{r}{r+1}}\left(\int_0^T\|\v(t)\|_{\widetilde{\L}^{r+1}}^{r+1}\d t\right)^{\frac{1}{r+1}}\right]\nonumber\\&\leq\mu \left\{\E\left(\int_0^T\|\nabla\u^n(t)\|_{\H}^2\d t\right)\right\}^{1/2}\left\{\E\left(\int_0^T\|\nabla\v(t)\|_{\H}^2\d t\right)\right\}^{1/2}\nonumber\\&\quad+T^{\frac{r-3}{2(r-1)}}\left\{\E\left(\int_0^T\|\u^n(t)\|_{\widetilde{\L}^{r+1}}^{r+1}\d t\right)\right\}^{\frac{1}{r-1}}\left\{\E\left(\sup_{t\in[0,T]}\|\u^n(t)\|_{\H}^2\right)\right\}^{\frac{r-3}{2(r-1)}}\left\{\E\left(\int_0^T\|\v(t)\|_{\V}^2\d t\right)\right\}^{\frac{1}{2}}\nonumber\\&\quad + \alpha \left\{\E\left(\int_0^T\|\u^n(t)\|_{\H}^2\d t\right)\right\}^{1/2}\left\{\E\left(\int_0^T\|\v(t)\|_{\H}^2\d t\right)\right\}^{1/2}\nonumber\\&\quad +\beta\left\{\E\left(\int_0^T\|\u^n(t)\|_{\widetilde{\L}^{r+1}}^{r+1}\d t\right)\right\}^{\frac{r}{r+1}}\left\{\E\left(\int_0^T\|\v(t)\|_{\widetilde{\L}^{r+1}}^{r+1}\d t\right)\right\}^{\frac{1}{r+1}}\nonumber\\&\leq  C(\E\left[\|\u_0\|_{\H}^2\right],\mu ,T,\alpha,\beta,K)
	\nonumber\\ & \quad \times \left[\left\{\E\left(\int_0^T\|\v(t)\|_{\H}^2\d t\right)\right\}^{\frac12} + \left\{\E\left(\int_0^T\|\nabla\v(t)\|_{\H}^2\d t\right)\right\}^{\frac12}+\E\left\{\left(\int_0^T\|\v(t)\|_{\widetilde{\L}^{r+1}}^{r+1}\d t\right)\right\}^{\frac{1}{r+1}}\right],
	\end{align}
	for all $\v\in\mathrm{L}^2(\Omega;\mathrm{L}^2(0,T;\V))\cap\mathrm{L}^{r+1}(\Omega;\mathrm{L}^{r+1}(0,T;\widetilde{\L}^{r+1}))$. 
	Using the Hypothesis \ref{hyp} (H.2) and energy	estimates  in Proposition \ref{prop-EE2},	we also have
	\begin{align}\label{4.42}
	\E\left[\int_0^{T
	}\|\Phi^n(t,\u^n(t))\|_{\mathcal{L}_{\Q}}^2\d
	t\right]&\leq K\E\left[\int_0^T\left(1+\|\u^n(t)\|_{\H}^2\right)\d t\right]\nonumber\\&\leq
	KT \left(1+ C \left(\E\left[\|\u_0\|_{\H}^2\right] + T\right) e^{C T} \right)<+\infty,
	\end{align}
and
	\begin{align}\label{4.43}
	\E\left[\left(\int_0^{T}\|\Phi^n(t,\u^n(t))\|_{\mathcal{L}_{\Q}}^2\d
	t\right)^2\right]
	&\leq K^2\E\left[\left(\int_0^T\left(1+\|\u^n(t)\|_{\H}^2\right)\d t\right)^2\right]
	\nonumber \\ & \leq T K^2\E\left[   \int_0^T \left(1+\|\u^n(t)\|_{\H}^2\right)^2 \d t \right]
	\nonumber \\ & \leq 2 T K^2\E\left[   \int_0^T \left(1+\|\u^n(t)\|_{\H}^4\right) \d t \right]
	\nonumber \\ & \leq 2 T^2 K^2 \left(1+\E \left[\sup_{t\in[0,T]}\|\u^n(t)\|_{\H}^4\right]\right)
	\nonumber \\ & \leq 2 T^2 K^2 C \left(1+\left(\E\left[\|\u_0\|_{\H}^4\right]+T\right)e^{CT} \right)  <+\infty.
\end{align}
	The right hand side of the estimate \eqref{4.43} is independent of $n$ and thus we can extract a subsequence
	$\{\Phi^{n_k}(\cdot,\u^{n_k})\}_{k\in\N}$ of $\{\Phi^n(\cdot,\u^n)\}_{n\in\N}$ such that (relabeled as $\{\Phi^n(\cdot,\u^n)\}$)
	\begin{equation}\label{4.43z}
	 \Phi^n(\cdot,\u^n)\mathrm{P}_n \xrightarrow{w}
	\Phi(\cdot)\ \textrm{ in	}\ \mathrm{L}^4(\Omega;\mathrm{L}^2(0,T ;\mathcal{L}_{\Q}(\H))).
	\end{equation}

		\vskip 0.2cm
	\noindent\textbf{Step (3):} \emph{It\^o stochastic differential satisfied by $\u(\cdot)$.} 	 We make use of the discussions  in  \cite[Theorem 7.5]{chow}, to obtain the It\^{o} stochastic differential satisfied by $\u(\cdot)$. Note that we cannot apply the results available in  \cite[Theorem 7.5]{chow} here directly as we have the weak convergence of the nonlinear term in  $\mathrm{L}^2(\Omega;\mathrm{L}^2(0,T ;\V'))+\mathrm{L}^{\frac{r+1}{r}}(\Omega;\mathrm{L}^{\frac{r+1}{r}}(0,T;\widetilde{\L}^{\frac{r+1}{r}})).$ 
	
	Due to technical reasons, we extend the time	interval from $[0,T]$ to an open interval $(-\e, T+\e)$ with	$\e>0$, and set the terms in the equation (\ref{4.37}) equal to	zero outside the interval $[0,T]$. Let $\phi\in\mathrm{H}^1(-\e,T+\e)$ be such that $\phi(0)=1$. 
	Also let us take $n\geq m$ so that $\H_m\subset \H_n$ and $\v\in \H_m$ so that $\P_m\v=\v$.
	For $\v\in \H_m,$ we define $\wi\v(t)=\phi(t)\v$.	Applying It\^{o}'s formula to the process	$(\u^n(t),\wi\v(t))$, one obtains
	\begin{align}\label{sq63}
	(\u^n(t),\wi\v(t))&=(\u^n(0),\wi\v(0))+\int_0^{T}(\u^n(t),\dot{\wi\v}_m(t))\d	t-\int_0^{T	}(\G(\u^n(t)),\wi\v(t))\d t \nonumber\\&\quad+\int_0^{T	}\left(\Phi^n(t,\u^n(t))\d\W^n(t),\wi\v(t)\right),
	\end{align}
where $\dot{\wi\v}_m(t)=\frac{\d\phi(t)}{\d t}\v_m$.	Note that, we can take the term by term limit $n\to\infty$ in (\ref{sq63}) by	making use of the weak convergences given in (\ref{4.40}) and (\ref{4.43z}).	For instance, we consider the stochastic integral present in  the equality (\ref{sq63}),	with $m$ fixed. Let $\mathcal{P}_{T}$ denote the class of	predictable processes with values in	$\mathrm{L}^2(\Omega;\mathrm{L}^2(0,T;\mathcal{L}_{\Q}(\H)))$ with the inner	product defined by	$$(\Phi,\Psi)_{\mathcal{P}_{T}}=\E\left[\int_0^{T}\Tr(\Phi(t)\Q\Psi^*(t))\d	t\right], \ \textrm{ for all }\ \Phi,\Psi\in\mathcal{P}_{T}.$$	Let us define a map $\upsilon:\mathcal{P}_{T}\to	\mathrm{L}^2(\Omega;\mathrm{L}^2(0,T))$ by	$$\upsilon(\Phi)=\displaystyle{\int_0^t}(\Phi(s)\d\W(s),\wi\v(s)),$$ for	all $t\in[0,T]$. Note that the map $\upsilon$ is linear and	continuous. The weak convergence  of	$\Phi^n(\cdot,\u^n)\mathrm{P}_n\xrightarrow{w} \Phi(\cdot)$ in	$\mathrm{L}^2(\Omega;\mathrm{L}^2(0,T ;\mathcal{L}_Q(\H)))$ (see	(\ref{4.43z})) implies that	
$$\left(\Phi^n(t,\u^n(t))\P_{n}\d\W(t),\zeta\right)_{\mathcal{P}_{T}}\to	\left(\Phi(t)\d\W(t),\zeta\right)_{\mathcal{P}_{T}},$$
 for all	$\zeta\in\mathcal{P}_{T}$ as $n\to\infty$. From this, as	$n\to\infty$, we conclude that
	\begin{align*}
		\upsilon(\Phi^n(t,\u^n(t)))&=\int_0^t(\Phi^n(t,\u^n(t))\P_{n}\d\W(s),\wi\v(s))
	\to \int_0^t(\Phi(t)\d\W(s),\wi\v(s)),
\end{align*} for all $t\in[0,T]$	and for each $m$. 
   Passing to limits term wise in the equation (\ref{sq63}), we get
	\begin{align}\label{sq64}
	&(\xi,\v)\phi(T)
	\nonumber\\& =(\u_0,\v)+\int_0^{T
	}(\u(t),\frac{\d\phi(t)}{\d t}{\v})\d
	t-\int_0^{T
	}\phi(t)\langle\G_0(t),\v\rangle\d t +\int_0^{T
	}\phi(t)\left(\Phi(t)\d \W(t),\v\right).
	\end{align}
	Let us now choose a subsequence $\{\phi_k\}\in\mathrm{H}^1(-\e,T+\e)$	with $\phi_k(0)=1$, for $k\in\mathbb{N}$, such that	$\phi_k\to\displaystyle{\chi}_t$ and the time derivative of $\phi_k$	converges to $\delta_t$, where $\displaystyle{\chi}_t(s)=1$, for	$s\leq t$ and $0$ otherwise, and $\delta_t(s)=\delta(t-s)$ is the	Dirac $\delta$-distribution. Using $\phi_k$ in place of $\phi$ in	(\ref{sq64}) and then letting $k\to\infty$, we obtain
%	\begin{align}\label{sq65}
%	(\u(t),\v)&=(\u_0,\v)-\int_0^{t}\langle\G_0(s),\v\rangle\d s +\int_0^{t}\left(\Phi(s)\d \W(s),\v_m\right),
%	\end{align}
%	for all $0<t< T$ with $(\u(T),\v)=(\xi,\v)$ and for any $\v\in\V $. 
	%{\color{red} Remember that $\mathcal{V}\subset\V\cap\widetilde{\L}^{r+1}\subset\H$ and $\mathcal{V}$ is dense in $\H$. Therefore, $\V\cap\widetilde{\L}^{r+1}$ is dense in $\H$ and the above equation holds for any $\v\in\V\cap\widetilde{\L}^{r+1}$. }  Thus, we	have
	\begin{align}\label{sq666}
	(\u(t),\v)=(\u_0,\v)-\int_0^t\langle\G_0(s),\v\rangle\d s +\int_0^t(\Phi(s)\d
	\W(s),\v),\ \mathbb{P}\text{-a.s.,}
	\end{align}
for all $0<t<T$	with $(\u(T),\v)=(\xi,\v)$, for all $\v\in\H_m$. Since \eqref{sq666} holds for any $\v\in \bigcup\limits_{n=1}^{\infty}\H_m$ and the set $\bigcup\limits_{n=1}^{\infty}\H_m$ is a sense subspace of $\V\cap\wi\L^{r+1}$, by using a standard continuity argument we can show that the following holds
\begin{align}\label{sq66}
	(\u(t),\v)=(\u_0,\v)-\int_0^t\langle\G_0(s),\v\rangle\d s +\int_0^t(\Phi(s)\d
	\W(s),\v),\ \mathbb{P}\text{-a.s.,}
\end{align}
 for any $\v\in \V\cap\wi\L^{r+1}.$ Hence, $\u(\cdot)$ satisfies the following stochastic differential equation: 
	\begin{equation}\label{4.44}
	\left\{
	\begin{aligned}
	\d\u(t)&=-\G_0(t)\d
	t+\Phi(t)\d\W(t),\\
	\u(0)&=\u_0,
	\end{aligned}
	\right.
	\end{equation}
 in $\V^{\prime}+\widetilde{\L}^{\frac{r+1}{r}}$. Moreover,  for  $\u_0\in \mathrm{L}^{4+\eta}(\Omega;\H)$ (for some $\eta>0$), by using the weakly lowersemicontinuity property of the norms, the following energy estimate holds true: 
 \begin{align} \label{eng1}
 	&\E\left[\sup_{t\in[0,T]}\|\u(t)\|_{\H}^{4+\eta}+4\mu \int_0^{T}\|\u(t)\|_{\V}^2\d t+4\beta\int_0^T\|\u(t)\|_{\widetilde{\L}^{r+1}}^{r+1}\d t\right]\nonumber\\&\leq\liminf_{n\to\infty}\E\left[\sup_{t\in[0,T]}\|\u^n(t)\|_{\H}^{4+\eta}+4\mu \int_0^{T}\|\u^n(t)\|_{\V}^2\d t+4\beta\int_0^T\|\u^n(t)\|_{\widetilde{\L}^{r+1}}^{r+1}\d t\right]\nonumber\\&\leq C\left(\E\left[\|\u_0\|_{\H}^{4+\eta}\right]+T\right)e^{CT}.
 \end{align}  %In fact, using a density argument, one can also show that \eqref{eqn43} is true for all $\v\in\mathcal{V}_{\infty}$ also, where $\mathcal{V}_{\infty}$ is defined in \eqref{eqn-vinfty}. 

		\vskip 0.2cm
	\noindent\textbf{Step (4):} \emph{Energy equality satisfied by $\u(\cdot)$.} 
Let us now establish the energy equality satisfied by $\u(\cdot)$. Note that such an energy equality is not immediate due to the final convergence in \eqref{4.40} and we cannot apply the infinite dimensional It\^o formula available in the literature for semimartingales (see \cite[Theorem 1]{GK1} and  \cite[Theorem 6.1]{Me}). In \cite{CLF}, the authors established an approximation of $\u(\cdot)$ in bounded domains	such that the approximations are bounded and converge in both Sobolev and Lebesgue spaces simultaneously (cf. \cite{KWH} for such an approximation in periodic domains). In particular, they approximate $\u(t)$, for each $t\in [0,T],$ by using the finite-dimensional space spanned by the first $n$ eigenfunctions of the Stokes operator $\A$. Since we are working on unbounded domains, we do not have the existence of eigenfunctions of the Stokes operator. Therefore, we use the eigenfunctions of the operator $\mathfrak{L}$ (cf. \eqref{L2}-\eqref{L3}) to obtain a sequence which approximates $\u(\cdot)$. 
 Set 
%We follow \cite{CLF} to obtain an approximation of $\u(\cdot)$ in bounded domains such that the approximations are bounded and converge in both Sobolev and Lebesgue spaces simultaneously (one can see \cite{KWH} for such an approximation in periodic domains). We approximate $\u(t),$ for each $t\in(0,T)$ and $\mathbb{P}$-a.s. by using the finite-dimensional space spanned by the first $n$ eigenfunctions of the Stokes operator as (Theorem 4.3, \cite{CLF})
%\begin{align}
%	\u_n(t):=\mathrm{P}_{1/n}\u(t)=\sum_{\lambda_j<n^2}e^{-\lambda_j/n}\langle\u(t),\bolde_j\rangle \bolde_j.
%\end{align}
\begin{align}\label{3.32}
	\u_n(t):=\mathrm{P}_{1/n}\u(t)=\sum_{\nu_j<n^2}e^{-\nu_j/n}\langle\u(t),\boldsymbol{e}_j\rangle_{\mathbb{U}'\times\mathbb{U}} \boldsymbol{e}_j.
\end{align}
Note that $\mathrm{P}_{1/n}$ is a self-adjoint operator, but not a projection. For notational convenience, we use $\u_n$ for approximations of the type \eqref{3.32} and $\u^n$ for Galerkin approximations in Steps \textbf{(1), (2), (3)}. Note first that 
\begin{align}\label{3p34}
\|\u_n\|_{\H}^2=\|\mathrm{P}_{1/n}\u\|_{\H}^2&=\sum_{\nu_j<n^2}e^{-2\nu_j/n}|\langle\u,\bolde_j\rangle_{\mathbb{U}'\times\mathbb{U}}|^2 \leq\sum_{j=1}^{\infty}|\langle\u,\bolde_j\rangle_{\mathbb{U}'\times\mathbb{U}}|^2=\|\u\|_{\H}^2<+\infty,
\end{align}
for all $\u\in\H$. It can also be seen that
\begin{align}\label{3.36}
&\|(\mathrm{I}_d-\mathrm{P}_{1/n})\u\|_{\H}^2
\nonumber\\&=\|\u\|_{\H}^2-2\langle\u,\mathrm{P}_{1/n}\u\rangle_{\mathbb{U}'\times\mathbb{U}}+\|\mathrm{P}_{1/n}\u\|_{\H}^2\nonumber\\&=\sum_{j=1}^{\infty}|\langle\u,\bolde_j\rangle_{\mathbb{U}'\times\mathbb{U}}|^2-2\sum_{\nu_j<n^2}e^{-\nu_j/n}|\langle\u,\bolde_j\rangle_{\mathbb{U}'\times\mathbb{U}}|^2+\sum_{\nu_j<n^2}e^{-2\nu_j/n}|\langle\u,\bolde_j\rangle_{\mathbb{U}'\times\mathbb{U}}|^2\nonumber\\&=\sum_{\nu_j<n^2}(1-e^{-\nu_j/n})^2|\langle\u,\bolde_j\rangle_{\mathbb{U}'\times\mathbb{U}}|^2+\sum_{\nu_j\geq n^2}|\langle\u,\bolde_j\rangle_{\mathbb{U}'\times\mathbb{U}}|^2,
\end{align}
for all $\u\in\H$. Note that the final term on the right hand side of the equality \eqref{3.36} tends to zero as $n\to\infty$, since the series $\sum_{j=1}^{\infty}|\langle\u,e_j\rangle_{\mathbb{U}'\times\mathbb{U}}|^2$ is convergent. The first term on the right hand side of the equality can be bounded from above by $$\sum_{j=1}^{\infty}(1-e^{-\nu_j/n})^2|\langle\u,\bolde_j\rangle_{\mathbb{U}'\times\mathbb{U}}|^2\leq 4\sum_{j=1}^{\infty}|\langle\u,\bolde_j\rangle_{\mathbb{U}'\times\mathbb{U}}|^2=4\|\u\|_{\H}^2<+\infty.$$ By using the Dominated Convergence Theorem, one can interchange the limit and sum and hence we get 
$$\lim_{n\to\infty}\sum_{j=1}^{\infty}(1-e^{-\nu_j/n})^2|\langle\u,\bolde_j\rangle_{\mathbb{U}'\times\mathbb{U}}|^2=\sum_{j=1}^{\infty}\lim_{n\to\infty}(1-e^{-\nu_j/n})^2|\langle\u,\bolde_j\rangle_{\mathbb{U}'\times\mathbb{U}}|^2=0.$$
  Hence, from \eqref{3.36}, we have 
  \begin{align}\label{3.37}
  	\|(\mathrm{I}_d-\mathrm{P}_{1/n})\u\|_{\H}\to 0 \text{ as }n\to\infty.
  \end{align}
   %Moreover, for $\u\in\V'$, we have 
%  \begin{align}\label{3.34}
%\|(\mathrm{I}_d-\mathrm{P}_{1/n})\u\|_{\V'}^2&=\|\A^{-1/2}(\mathrm{I}_d-\mathrm{P}_{1/n})\u\|_{\H}^2=\|(\mathrm{I}_d-\mathrm{P}_{1/n})\A^{-1/2}\u\|_{\H}^2\nonumber\\&\leq\|\mathrm{I}_d-\mathrm{P}_{1/n}\|_{\mathcal{L}(\H)}^2\|\u\|_{\V'}^2\nonumber\\&\to 0\ \text{ as }\ n\to\infty. 
%  \end{align}
 Let us now discuss the properties of the approximation given in \eqref{3.32}. The authors in \cite{CLF}  showed that such an approximation  satisfies the following: 
\begin{equation}\label{approx-properties}
	\left\{
	\begin{aligned}
	%	(1) &	\mbox{  $\u_n(t)\in\mathcal{V}_{\infty}$ for all $t\in[0,T]$;}\\
		(1) & 	\mbox{   $\u_n(t)\to\u(t)$ in $\H_0^1(\mathcal{O})$ with $\|\u_n(t)\|_{\H^1}\leq \mathfrak{C} \|\u(t)\|_{\H^1}$, for a.e. $t\in[0,T]$ } 
		\\ & \mbox{ and $\mathbb{P}$-a.s.,} 
		 \\
		(2) & 	\mbox{   $\u_n(t)\to\u(t)$ in $\L^{p}(\mathcal{O})$ with $\|\u_n(t)\|_{{\L}^{p}}\leq \mathfrak{C} \|\u(t)\|_{{\L}^{p}}$, for any $p\in(1,\infty)$, } \\ & \mbox{ for a.e. $t\in[0,T]$ and $\mathbb{P}$-a.s.,} \\
		(3) & 	  \mbox{   $\u_n(t)$ is divergence-free and zero on $\partial\mathcal{O}$,  for a.e. $t\in[0,T]$ and $\mathbb{P}$-a.s.}
	\end{aligned}
\right.
\end{equation}
%\begin{enumerate}
%	\item [(1)] $\u_n(t)\in\mathcal{V}_{\infty}$ for all $t\in[0,T]$;
%	\item [(2)] $\u_n(t)\to\u(t)$ in $\H_0^1(\mathcal{O})$ with $\|\u_n(t)\|_{\H^1}\leq \mathfrak{C} \|\u(t)\|_{\H^1}$, for all $t\in[0,T]$ and $\mathbb{P}$-a.s. , 
%	\item [(3)] $\u_n(t)\to\u(t)$ in $\L^{p}(\mathcal{O})$ with $\|\u_n(t)\|_{{\L}^{p}}\leq \mathfrak{C} \|\u(t)\|_{{\L}^{p}}$, for any $p\in(1,\infty)$, for all $t\in[0,T]$ and $\mathbb{P}$-a.s., 
%	\item [(4)] $\u_n(t)$ is divergence-free and zero on $\partial\mathcal{O}$,  for all $t\in[0,T]$ and $\mathbb{P}$-a.s.
%\end{enumerate}
In (1) and (2) of \eqref{approx-properties}, $\mathfrak{C}$ is an absolute constant.   Note that for $2\leq d\leq 4$ and $s>2$, $\D(\mathfrak{L})\subset \V_s \subset\H^2\subset\L^p,$ for all $p\in(1,\infty)$ (cf. Subsection \ref{C_O}). Since $\bolde_j$'s are the eigenfunctions of the  operator $\mathfrak{L}$, we get $\bolde_j\in\D(\mathfrak{L})\subset\V$ and $\bolde_j\in\D(\mathfrak{L})\subset\widetilde{\L}^{r+1}$.  We also need the fact 
%{\color{red} \begin{align}\label{335}
%	\|\u_n-\u\|_{\mathrm{L}^{r+1}(\Omega;\mathrm{L}^{r+1}(0,T;\widetilde{\L}^{r+1}))}, \ \text{ as }\ n\to\infty,
%\end{align}
%or \begin{align}\label{335}
%	\|\u_n-\u\|_{\mathrm{L}^{r+1}(0,T;\widetilde{\L}^{r+1})}, \ \text{ as }\ n\to\infty,\;\; \mathbb{P}\text{-a.s.}
%\end{align}
% }
\begin{align}\label{335}
	\|\u_n-\u\|_{\mathrm{L}^{r+1}(0,T;\widetilde{\L}^{r+1})} \to 0\ \text{ as }\ n\to\infty,\;\; \mathbb{P}\text{-a.s.}
\end{align}
which  follows from (3). Since $\u\in\mathrm{L}^{r+1}(\Omega;\mathrm{L}^{r+1}(0,T;\widetilde{\L}^{r+1}))$ and the fact that $\|\u^n(t,\omega)-\u(t,\omega)\|_{\widetilde{\L}^{r+1}}\to 0$, for a.e. $t\in[0,T]$ and a.e. $\omega\in\Omega$, one can obtain the above convergence by an application of the Dominated Convergence Theorem (with the dominating function $(1+\mathfrak{C})\|\u(t,\omega)\|_{\widetilde{\L}^{r+1}}$).  Moreover, using the fact that $\u\in\mathrm{L}^{2}(\Omega;\mathrm{L}^{2}(0,T;\V))$, we also have 
%{\color{red}\begin{align}\label{335a}
%	\|\u_n-\u\|_{\mathrm{L}^{2}(\Omega;\mathrm{L}^{2}(0,T;\V))}\to 0, \ \text{ as }\ n\to\infty.
%\end{align} 
%or \begin{align}\label{335a}
%	\|\u_n-\u\|_{\mathrm{L}^{2}(0,T;\V)}, \ \text{ as }\ n\to\infty, \;\; \mathbb{P}\text{-a.s.}
%\end{align}
%}
\begin{align}\label{335a}
	\|\u_n-\u\|_{\mathrm{L}^{2}(0,T;\V)}\to 0 \ \text{ as }\ n\to\infty, \;\; \mathbb{P}\text{-a.s.}
\end{align}

In order to prove the energy equality, we follow the ideas from the work \cite{Krylov_2010}. Note that from \eqref{sq66}, we have that for each $x$ for all $t\in[0,\infty)$
\begin{align}\label{sq67}
	\P_{1/n}\u(t,x) = \P_{1/n} \u_0(x) - \int_0^t \P_{1/n}\G_0(s,x) \d s +\int_0^t \P_{1/n} \Phi(s,x)\d
	\W(s),\ \mathbb{P}\text{-a.s.},
\end{align}
so that $\P_{1/n}\u(\cdot,\cdot)$ is a smooth function in the second variable. By using the finite dimensional It\^o formula, we have for each $x\in\mathcal{O}$
\begin{align}\label{eqn-pointwise-ito}
	|\P_{1/n}\u(t,x)|^2 & = |\P_{1/n}\u_0(x)|^2 - 2  \int_{0}^{t} \P_{1/n}\G_0(s,x) \cdot \P_{1/n} \u(s,x) \d s 
	\nonumber\\ & \quad  +  2 \int_0^t \P_{1/n} \u(s,x)\cdot\P_{1/n} \Phi(s,x) \d
	\W(s) 
	\nonumber\\ & \quad + \sum_{k=1}^{\infty} \int_{0}^{t} \left[\sum_{\nu_j < n^2 } e^{- \nu_j/ n} \sqrt{\mu_k} \left\langle\Phi(s)\boldq_k, \bolde_j\right\rangle_{\mathbb{U}^\prime \times \mathbb{U}} \bolde_j(x)\right]^2  \d s   ,  \ \mathbb{P}\text{-a.s.}
\end{align}
Now we integrate \eqref{eqn-pointwise-ito} over $\mathcal{O}$ and obtain
\begin{align}\label{eqn-int-O}
	\|\P_{1/n}\u(t)\|^2_{\H} & = \|\P_{1/n}\u_0\|^2_{\H} - 2 \int_{\mathcal{O}} \int_{0}^{t} \P_{1/n}\G_0(s,x) \cdot \P_{1/n} \u(s,x) \d s  \d x
	\nonumber\\ & \quad  +  2 \int_{\mathcal{O}} \int_0^t  \P_{1/n} \u(s,x)\cdot \P_{1/n} \Phi(s,x)\d
	\W(s)   \d x
	\nonumber\\ & \quad +  \int_{\mathcal{O}} \sum_{k=1}^{\infty} \int_{0}^{t} \left[\sum_{\nu_j < n^2 } e^{- \nu_j/ n} \sqrt{\mu_k} \left\langle\Phi(s)\boldq_k, \bolde_j\right\rangle_{\mathbb{U}^\prime \times \mathbb{U}} \bolde_j(x)\right]^2  \d s \d x ,  \ \mathbb{P}\text{-a.s.}
\end{align}
 We aim to apply both the deterministic and stochastic versions of Fubini’s theorem. There are no issues concerning the integral with respect to $\d s$. However, to use the stochastic Fubini theorem, we must ensure that the resulting stochastic integral is well-defined. In particular, we require that the following integral is finite $\mathbb{P}$ a.s.:
\begin{align}
	\int_{0}^{t} \sum_{k=1}^{\infty} \left[ \int_{\mathcal{O}} \sqrt{\mu_k} \P_{1/n} \Phi(s,x)\boldq_k (x) \cdot \P_{1/n} \u(s,x) \d x  \right]^2 \d s < \infty.
\end{align}
The computations below show that
\begin{align}
\mathbb{E}\left[	\int_{0}^{t} \sum_{k=1}^{\infty} \left( \int_{\mathcal{O}} \sqrt{\mu_k} \P_{1/n} \Phi(s,x)\boldq_k (x) \cdot \P_{1/n} \u(s,x) \d x  \right)^2 \d s \right] < \infty,
\end{align}
  and this is known to be sufficient to apply the stochastic version of Fubini’s theorem (see, for instance, Lemma 3.3 of \cite{Kallianpur+Striebel_1969} or \cite{Neerven+Veraar} and the references therein for more sophisticated results).   Also note that $\P_{1/n}\u(t,x)$  is continuous (infinitely differentiable)
in $x$ for any $(\omega,t)$. Therefore, it is  $\mathcal{F}_t\otimes \mathcal{B}(\R^+)$-measurable. Since it is also continuous in $t$ for each $(\omega,x)$, the function $\P_{1/n}\u(t,x)$ is  $\mathcal{F}_t\otimes \mathcal{B}(\R^d)$-measurable and there is no measurability obstructions in applying Fubini’s theorems.

For $\G_0=\G_0^1+\G_0^2$, where $\G_0^1\in\mathrm{L}^2(\Omega;\mathrm{L}^{2}(0,T;\V'))$ and $\G_0^2\in\mathrm{L}^{\frac{r+1}{r}}(\Omega;\mathrm{L}^{\frac{r+1}{r}}(0,T;{\wi\L}^{\frac{r+1}{r}}))$,  we consider 
\begin{align}
	& \int_{0}^{t}  \int_{\mathcal{O}} \P_{1/n}\G_0(s,x) \cdot \P_{1/n} \u(s,x) \d s 
	\nonumber\\ & = \int_{0}^{t}   \left\langle \G_0(s) , \P_{1/n}\P_{1/n} \u(s) \right\rangle \d s 
	\nonumber\\ & = \int_{0}^{t}   \left\langle \G_0^1 (s) , \P_{1/n}\P_{1/n} \u(s) \right\rangle \d s  + \int_{0}^{t}   \left\langle \G_0^2 (s) , \P_{1/n}\P_{1/n} \u(s) \right\rangle \d s 
	\nonumber\\ &  \leq \int_{0}^{t} \|\G_0^1(s)\|_{\V^{\prime}} \|\P_{1/n}\P_{1/n} \u(s)\|_{\V}   \d s +  \int_{0}^{t} \|\G_0^2(s)\|_{\wi\L^{\frac{r+1}{r}}} \|\P_{1/n}\P_{1/n} \u(s)\|_{\wi\L^{r+1}} \d s
	\nonumber\\ &  \leq \mathfrak{C} \int_{0}^{t} \|\G_0^1(s)\|_{\V^{\prime}} \|\P_{1/n} \u(s)\|_{\V} \d s  + \mathfrak{C} \int_{0}^{t} \|\G_0^2(s)\|_{\wi\L^{\frac{r+1}{r}}} \|\P_{1/n} \u(s)\|_{\wi\L^{r+1}} \d s
	\nonumber\\ &  \leq \mathfrak{C}^2 \int_{0}^{t} \|\G_0^1(s)\|_{\V^{\prime}} \| \u(s)\|_{\V} \d s  + \mathfrak{C}^2 \int_{0}^{t} \|\G_0^2(s)\|_{\wi\L^{\frac{r+1}{r}}} \|  \u(s)\|_{\wi\L^{r+1}} \d s
	\nonumber\\ &  \leq \mathfrak{C}^2 \left(\int_{0}^{t} \|\G_0^1(s)\|_{\V^{\prime}}^2 \d s\right)^{\frac12}   \left(\int_{0}^{t}  \| \u(s)\|_{\V}^2 \d s \right)^{\frac12}
	\nonumber \\ & \quad + \mathfrak{C}^2 \left(\int_{0}^{t} \|\G_0^2(s)\|_{\wi\L^{\frac{r+1}{r}}}^{\frac{r+1}{r}} \d s\right)^{\frac{r}{r+1}}  \left(\int_{0}^{t}  \|  \u(s)\|_{\wi\L^{r+1}}^{r+1} \d s\right)^{\frac{1}{r+1}},
\end{align}
$\mathbb{P}$-a.s., where we have used properties $(1)$ and $(2)$ from \eqref{approx-properties}, which, due to the fact that $\G_0^1\in\mathrm{L}^2(\Omega;\mathrm{L}^{2}(0,T;\V'))$, $\G_0^2\in\mathrm{L}^{\frac{r+1}{r}}(\Omega;\mathrm{L}^{\frac{r+1}{r}}(0,T;{\wi\L}^{\frac{r+1}{r}}))$ and   $\u\in \mathrm{L}^2(\Omega;\mathrm{L}^2(0,T;\V))\cap\mathrm{L}^{r+1}(\Omega;\mathrm{L}^{r+1}(0,T;\widetilde{\L}^{r+1}))$, implies
\begin{align}
	\E\left[\int_{0}^{t}  \int_{\mathcal{O}} \P_{1/n}\G_0(s,x) \cdot \P_{1/n} \u(s,x) \d s \right] < +\infty.
 \end{align}

Now we consider
\begin{align}
	  & \sum_{k=1}^{\infty} \int_{0}^{t} \int_{\mathcal{O}} \left[\sum_{\nu_j < n^2 } e^{- \nu_j/ n} \sqrt{\mu_k} \left\langle\Phi(s)\boldq_k, \bolde_j\right\rangle_{\mathbb{U}^\prime \times \mathbb{U}} \bolde_j(x)\right]^2 \d x \d s
	  \nonumber\\ & = \sum_{k=1}^{\infty} \int_{0}^{t}   \left[ \sum_{\nu_j < n^2 } e^{- 2\nu_j/ n}    |\left\langle\Phi(s)\sqrt{\mu_k}\boldq_k, \bolde_j\right\rangle_{\mathbb{U}^\prime \times \mathbb{U}} |^2  \right]  \d s
	  \nonumber\\ & \leq  \sum_{k=1}^{\infty} \int_{0}^{t}     \sum_{j=1 }^{+\infty}      |\left\langle\Phi(s)\sqrt{\mu_k}\boldq_k, \bolde_j\right\rangle_{\mathbb{U}^\prime \times \mathbb{U}} |^2     \d s
	  \nonumber\\ & =  \sum_{k=1}^{\infty} \int_{0}^{t}   \|\Phi(s)\sqrt{\mu_k}\boldq_k\|_{\H}^2   \d s
	    =   \int_{0}^{t}   \|\Phi(s)\|_{\mathcal{L}_{Q}}^2   \d s,
\end{align}
which, due to the fact that $\Phi  \in \mathrm{L}^2(\Omega;\mathrm{L}^2(0,T ;\mathcal{L}_{\Q}(\H)))$ implies
\begin{align}
	\E\left[\sum_{k=1}^{\infty} \int_{0}^{t} \int_{\mathcal{O}} \left[\sum_{\nu_j < n^2 } e^{- \nu_j/ n} \sqrt{\mu_k} \left\langle\Phi(s)\boldq_k, \bolde_j\right\rangle_{\mathbb{U}^\prime \times \mathbb{U}} \bolde_j(x)\right]^2\d x  \d s\right]  < +\infty.
\end{align}
By the Cauchy-Schwarz and H\"older's inequalities, we have 
\begin{align}
	& \E \left[ \int_{0}^{t} \sum_{k=1}^{\infty} \left( \int_{\mathcal{O}} \sqrt{\mu_k} \P_{1/n} \Phi(s,x)\boldq_k (x) \cdot \P_{1/n} \u(s,x) \d x  \right)^2\d s \right] 
	\nonumber\\ & = \E \left[ \int_{0}^{t} \sum_{k=1}^{\infty} \left[ ( \Phi(s)\sqrt{\mu_k} \boldq_k, \P_{1/n}\P_{1/n}\u(s))    \right]^2  \d s \right] 
	\nonumber\\  & \leq  \E \left[ \int_{0}^{t} \sum_{k=1}^{\infty}  \| \Phi(s)\sqrt{\mu_k} \boldq_k\|_{\H}^2 \|\P_{1/n}\P_{1/n}\u(s)\|^2_{\H}    \d s \right]  
	\nonumber\\  & \leq  \E \left[ \int_{0}^{t} \|\u(s)\|^2_{\H}   \|\Phi(s)\|_{\mathcal{L}_{Q}}^2    \d s \right] 
	\nonumber\\  & \leq  \E \left[ \sup_{s\in[0,t]} \|\u(s)\|^2_{\H} \int_{0}^{t}    \|\Phi(s)\|_{\mathcal{L}_{Q}}^2    \d s \right] 
	\nonumber\\  & \leq  \left(\E \left[ \sup_{s\in[0,t]} \|\u(s)\|^4_{\H} \right]\right)^{\frac12}  \left(\E \left[  \left( \int_{0}^{t}    \|\Phi(s)\|_{\mathcal{L}_{Q}}^2    \d s\right)^2 \right] \right)^{\frac{1}{2}} 
	< +\infty,
\end{align}
where we have used that $\u \in  \mathrm{L}^4\left(\Omega;\mathrm{L}^{\infty}(0,T;\H)\right)$ and $\Phi  \in \mathrm{L}^4(\Omega;\mathrm{L}^2(0,T ;\mathcal{L}_{\Q}(\H)))$.

Hence, by an application of deterministic and stochastic versions of Fubini’s theorem (see, for instance, Lemma 3.3 of \cite{Kallianpur+Striebel_1969} or \cite{Neerven+Veraar}), we deduce from \eqref{eqn-int-O} that 
\begin{align}\label{eqn-in-n}
	\|\P_{1/n}\u(t)\|^2_{\H} 
& = \|\P_{1/n}\u_0\|^2_{\H} - 2  \int_{0}^{t}\int_{\mathcal{O}} \P_{1/n}\G_0(s,x) \cdot \P_{1/n} \u(s,x) \d s 
	\nonumber\\ & \quad  +  2 \int_0^t  \int_{\mathcal{O}} \P_{1/n} \u(s,x)  \cdot  \P_{1/n} \Phi(s,x)\d
	\W(s) 
	\nonumber\\ & \quad +   \sum_{k=1}^{\infty} \int_{0}^{t}\int_{\mathcal{O}}  \left[\sum_{\nu_j < n^2 } e^{- \nu_j/ n} \sqrt{\mu_k} \left\langle\Phi(s)\boldq_k, \bolde_j\right\rangle_{\mathbb{U}^\prime \times \mathbb{U}} \bolde_j(x)\right]^2  \d s 
	\nonumber\\ & = \|\P_{1/n}\u_0\|^2_{\H} - 2  \int_{0}^{t}   \left(\P_{1/n}\G_0(s) , \P_{1/n} \u(s)\right) \d s 
	\nonumber\\ & \quad  +  2 \int_0^t  \left( \P_{1/n} \Phi(s)\d
	\W(s) ,  \P_{1/n} \u(s)  \right)
	\nonumber\\ & \quad +   \sum_{k=1}^{\infty} \int_{0}^{t}   \sum_{\nu_j < n^2 } e^{- 2\nu_j/ n}  |\left\langle\Phi(s)\sqrt{\mu_k}\boldq_k, \bolde_j\right\rangle_{\mathbb{U}^\prime \times \mathbb{U}}|^2   \d s
	\nonumber\\ & = \|\P_{1/n}\u_0\|^2_{\H} - 2  \int_{0}^{t}   \left\langle \G_0(s) , \P_{1/n}\P_{1/n} \u(s)\right\rangle \d s 
   +  2 \int_0^t  \left(  \Phi(s)\d
	\W(s) ,  \P_{1/n}\P_{1/n} \u(s)  \right)
	\nonumber\\ & \quad +   \int_{0}^{t}   \sum_{k=1}^{\infty}  \sum_{\nu_j < n^2 } e^{- 2\nu_j/ n}  |\left\langle\Phi(s)\sqrt{\mu_k}\boldq_k, \bolde_j\right\rangle_{\mathbb{U}^\prime \times \mathbb{U}}|^2   \d s,  \ \mathbb{P}\text{-a.s.}
\end{align}
%Note that 
%\begin{align}
%	\|\P_{1/n}\y\|_{\H}^2= \|\y\|^2_{\H} - \left( \P_{1/n}\y,\y-\P_{1/n}\y \right)  - \left(  \y,\y-\P_{1/n}\y \right)
%\end{align}
%and, in view of \eqref{3p34} and \eqref{3.37}, we have 
%\begin{align}
%& \lim_{n\to\infty}	|\left( \P_{1/n}\y,\y-\P_{1/n}\y \right)  + \left(  \y,\y-\P_{1/n}\y \right)| 
%\nonumber\\ & \leq \lim_{n\to\infty} \|\P_{1/n}\y\|_{\H}\|(\I_d-\P_{1/n})\y\|_{\H} + \lim_{n\to\infty} \|\y\|_{\H}\|(\I_d-\P_{1/n})\y\|_{\H}
%\nonumber\\ & \leq 2 \lim_{n\to\infty} \|\y\|_{\H}\|(\I_d-\P_{1/n})\y\|_{\H}  =0, \;\;\; \mathbb{P}\text{-a.s.}
%\end{align}
From \eqref{3.37}, we infer that  for all $t\in[0,T]$
\begin{align}\label{eqn-lim-1}
		\lim_{n\to\infty}\|\P_{1/n}\u(t)\|_{\H}^2= \|\u(t)\|^2_{\H}, \ \mathbb{P}\text{-a.s.,}  \ \text{  and } \ \lim_{n\to\infty}	\|\P_{1/n}\u_0\|_{\H}^2= \|\u_0\|^2_{\H}, \ \mathbb{P}\text{-a.s.}
 \end{align}
For $\G_0=\G_0^1+\G_0^2$, where $\G_0^1\in\mathrm{L}^2(\Omega;\mathrm{L}^{2}(0,T;\V'))$ and $\G_0^2\in\mathrm{L}^{\frac{r+1}{r}}(\Omega;\mathrm{L}^{\frac{r+1}{r}}(0,T;{\wi\L}^{\frac{r+1}{r}}))$,  we consider 
\begin{align}\label{eqn-lim-2}
	& \lim_{n\to \infty} \left| \int_{0}^{t}   \left\langle \G_0(s) , \P_{1/n}\P_{1/n} \u(s)\right\rangle \d s  - \int_{0}^{t}   \left\langle \G_0(s) ,   \u(s)\right\rangle \d s  \right|
	\nonumber\\ & = \lim_{n\to \infty} \left| \int_{0}^{t}   \left\langle \G_0^1(s) + \G_0^2(s) , \P_{1/n}(\P_{1/n} \u(s) -  \u(s)) + \P_{1/n} \u(s) -   \u(s)\right\rangle \d s  \right|
	\nonumber\\ & \leq 
	 \lim_{n\to \infty} \left| \int_{0}^{t}   \left\langle \G_0^1(s)  , \P_{1/n}(\P_{1/n} \u(s) -  \u(s)) + \P_{1/n} \u(s) -   \u(s)\right\rangle \d s  \right| 
	 \nonumber\\ & \quad + \lim_{n\to \infty} \left| \int_{0}^{t}   \left\langle   \G_0^2(s) , \P_{1/n}(\P_{1/n} \u(s) -  \u(s)) + \P_{1/n} \u(s) -   \u(s)\right\rangle \d s  \right|
	 \nonumber\\ & \leq 
	 \lim_{n\to \infty} \|\G_0^1\|_{\mathrm{L}^{2}(0,T;\V')} (\|\P_{1/n}(\P_{1/n} \u -  \u)\|_{\mathrm{L}^{2}(0,T;\V)} + \|\P_{1/n} \u -   \u\|_{\mathrm{L}^{2}(0,T;\V)}   )
	 \nonumber\\ & \quad + \lim_{n\to \infty} \|\G_0^2\|_{\mathrm{L}^{\frac{r+1}{r}}(0,T;{\wi\L}^{\frac{r+1}{r}})} (\|\P_{1/n}(\P_{1/n} \u -  \u)\|_{\mathrm{L}^{r+1}(0,T;{\wi\L}^{r+1})} + \|\P_{1/n} \u -   \u\|_{\mathrm{L}^{r+1}(0,T;{\wi\L}^{r+1})}   )
	 \nonumber\\ & \leq 
	 \lim_{n\to \infty} \|\G_0^1\|_{\mathrm{L}^{2}(0,T;\V')} (\mathfrak{C}\|\P_{1/n} \u -  \u\|_{\mathrm{L}^{2}(0,T;\V)} + \|\P_{1/n} \u -   \u\|_{\mathrm{L}^{2}(0,T;\V)}   )
	 \nonumber\\ & \quad + \lim_{n\to \infty} \|\G_0^2\|_{\mathrm{L}^{\frac{r+1}{r}}(0,T;{\wi\L}^{\frac{r+1}{r}})} (\mathfrak{C}\|\P_{1/n} \u -  \u\|_{\mathrm{L}^{r+1}(0,T;{\wi\L}^{r+1})} + \|\P_{1/n} \u -   \u\|_{\mathrm{L}^{r+1}(0,T;{\wi\L}^{r+1})}   )
	 \nonumber \\ & =0, \;\;\; \mathbb{P}\text{-a.s.},
\end{align}
where we have used \eqref{335} and \eqref{335a}. Let us now consider
{\small{\begin{align}
	&    \int_0^t\|\Phi(s)\|_{\mathcal{L}_{\Q}}^2\d s      -   \int_{0}^{t}  \sum_{k=1}^{\infty}  \sum_{\nu_j < n^2 } e^{- 2\nu_j/ n}  |\left\langle\Phi(s)\sqrt{\mu_k}\boldq_k, \bolde_j\right\rangle_{\mathbb{U}^\prime \times \mathbb{U}}|^2   \d s 
	\nonumber\\ & =       \int_{0}^{t}  \sum_{k=1}^{\infty}  \sum_{j =1 }^{\infty}   |\left\langle\Phi(s)\sqrt{\mu_k}\boldq_k, \bolde_j\right\rangle_{\mathbb{U}^\prime \times \mathbb{U}}|^2   \d s     -   \int_{0}^{t}  \sum_{k=1}^{\infty}  \sum_{\nu_j < n^2 } e^{- 2\nu_j/ n}  |\left\langle\Phi(s)\sqrt{\mu_k}\boldq_k, \bolde_j\right\rangle_{\mathbb{U}^\prime \times \mathbb{U}}|^2   \d s
	\nonumber\\ & =       \int_{0}^{t}  \sum_{k=1}^{\infty}  \sum_{\nu_j <  n^2 } (1 - e^{- 2\nu_j/ n} )  |\left\langle\Phi(s)\sqrt{\mu_k}\boldq_k, \bolde_j\right\rangle_{\mathbb{U}^\prime \times \mathbb{U}}|^2   \d s + \int_{0}^{t}  \sum_{k=1}^{\infty}  \sum_{\nu_j \geq   n^2 }   |\left\langle\Phi(s)\sqrt{\mu_k}\boldq_k, \bolde_j\right\rangle_{\mathbb{U}^\prime \times \mathbb{U}}|^2   \d s
	\nonumber\\ & \leq        \int_{0}^{t}  \sum_{k=1}^{\infty}  \sum_{j=1}^{\infty} (1 - e^{- 2\nu_j/ n} )  |\left\langle\Phi(s)\sqrt{\mu_k}\boldq_k, \bolde_j\right\rangle_{\mathbb{U}^\prime \times \mathbb{U}}|^2   \d s + \int_{0}^{t}  \sum_{k=1}^{\infty}  \sum_{\nu_j \geq   n^2 }   |\left\langle\Phi(s)\sqrt{\mu_k}\boldq_k, \bolde_j\right\rangle_{\mathbb{U}^\prime \times \mathbb{U}}|^2   \d s, \   \mbox{$\mathbb{P}$-a.s.}
\end{align}}}
 Note that 
\begin{align}
	& \int_{0}^{t}  \sum_{k=1}^{\infty}  \sum_{j=1}^{\infty} (1 - e^{- 2\nu_j/ n} )  |\left\langle\Phi(s)\sqrt{\mu_k}\boldq_k, \bolde_j\right\rangle_{\mathbb{U}^\prime \times \mathbb{U}}|^2   \d s + \int_{0}^{t}  \sum_{k=1}^{\infty}  \sum_{\nu_j \geq   n^2 }   |\left\langle\Phi(s)\sqrt{\mu_k}\boldq_k, \bolde_j\right\rangle_{\mathbb{U}^\prime \times \mathbb{U}}|^2   \d s \nonumber \\ & \leq  3 \int_{0}^{t}  \sum_{k=1}^{\infty}  \sum_{j=1}^{\infty}  |\left\langle\Phi(s)\sqrt{\mu_k}\boldq_k, \bolde_j\right\rangle_{\mathbb{U}^\prime \times \mathbb{U}}|^2   \d s = 3\int_0^t\|\Phi(s)\|_{\mathcal{L}_{\Q}}^2\d s < + \infty , \;\;\; \mathbb{P}\text{-a.s.}
\end{align}
Therefore, by an application of the Dominated Convergence Theorem, we arrive at
\begin{align}\label{eqn-lim-3}
    \lim_{n\to\infty}   \int_{0}^{t}  \sum_{k=1}^{\infty}  \sum_{\nu_j < n^2 } e^{- 2\nu_j/ n}  |\left\langle\Phi(s)\sqrt{\mu_k}\boldq_k, \bolde_j\right\rangle_{\mathbb{U}^\prime \times \mathbb{U}}|^2   \d s = 	\int_0^t\|\Phi(s)\|_{\mathcal{L}_{\Q}}^2\d s , \;\;\;  \mathbb{P}\text{-a.s.}
\end{align}

Finally, we consider 
\begin{align}
	& \E\left[ \left(\int_0^t  \left(  \Phi(s)\d
	\W(s) ,  \P_{1/n}\P_{1/n} \u(s) - \u(s)  \right)\right)^2 \right]
	\nonumber\\ & = \E \left[ \int_{0}^{t} \sum_{k=1}^{\infty} \left[ ( \Phi(s)\sqrt{\mu_k} \boldq_k, \P_{1/n}\P_{1/n}\u(s) - \u(s))    \right]^2  \d s \right] 
	\nonumber\\  & \leq  \E \left[ \int_{0}^{t} \sum_{k=1}^{\infty}  \| \Phi(s)\sqrt{\mu_k} \boldq_k\|_{\H}^2 \|\P_{1/n}\P_{1/n}\u(s) - \u(s) \|^2_{\H}    \d s \right]  
	\nonumber\\  & =  \E \left[ \int_{0}^{t} \sum_{k=1}^{\infty}  \| \Phi(s)\sqrt{\mu_k} \boldq_k\|_{\H}^2 \|\P_{1/n}\P_{1/n}\u(s) - \P_{1/n}\u(s) + \P_{1/n}\u(s) - \u(s) \|^2_{\H}    \d s \right]  
	\nonumber\\  & \leq 2 \E \left[ \int_{0}^{t} \sum_{k=1}^{\infty}  \| \Phi(s)\sqrt{\mu_k} \boldq_k\|_{\H}^2 \| \P_{1/n}\u(s) - \u(s) \|^2_{\H}    \d s \right]  
	\nonumber\\  & \leq 2 \E \left[ \int_{0}^{t} \|\P_{1/n}\u(s) -  \u(s)\|^2_{\H}   \|\Phi(s)\|_{\mathcal{L}_{Q}}^2    \d s \right] 
	\nonumber\\  & \leq 2 \E \left[ \sup_{s\in[0,t]} \|\P_{1/n}\u(s) -  \u(s)\|^2_{\H} \int_{0}^{t}    \|\Phi(s)\|_{\mathcal{L}_{Q}}^2    \d s \right] 
	\nonumber\\  & \leq  2 \left(\E \left[ \sup_{s\in[0,t]} \|\P_{1/n}\u(s) -  \u(s)\|^4_{\H} \right]\right)^{\frac12}  \underbrace{\left(\E \left[  \left( \int_{0}^{t}    \|\Phi(s)\|_{\mathcal{L}_{Q}}^2    \d s\right)^2 \right] \right)^{\frac{1}{2}}}_{< +\infty}.
\end{align}
We aim next to show that $\E \left[ \sup\limits_{s\in[0,t]} \|\P_{1/n}\u(s) -  \u(s)\|^4_{\H} \right]\to 0$ and $n\to\infty$. In order to achieve this goal, we make use of the Lebesgue-Vitali theorem (Theorem \ref{L-Vthm}). 

From property $(1)$ of \eqref{approx-properties}, we have that $$\sup\limits_{s\in[0,t]} \|\P_{1/n}\u(s) -  \u(s)\|^4_{\H}\to 0$$ as $n\to\infty$. From Proposition \ref{prop-EE2}, we have that for $\u_0\in\mathrm{L}^{4+\eta}(\Omega;\H)$ (for some $\eta>0$)
\begin{align}
	\E \left[ \sup\limits_{s\in[0,t]} \|\P_{1/n}\u(s) -  \u(s)\|^{4 + \eta}_{\H} \right] &  \leq C \E \left[ \sup\limits_{s\in[0,t]} [\|\P_{1/n}\u(s) \|^{4 + \eta}_{\H} + \|\u(s)\|^{4(1+\eta)}_{\H}] \right]
	\nonumber\\ & \leq C \E \left[ \sup\limits_{s\in[0,t]}   \|\u(s)\|^{4+\eta}_{\H} \right] < + \infty.
\end{align}
From Remark \ref{rem-uniform-integable}, it is immediate that the uniform integrability condition holds. Therefore, we apply Theorem \ref{L-Vthm} and obtain $\E \left[ \sup\limits_{s\in[0,t]} \|\P_{1/n}\u(s) -  \u(s)\|^4_{\H} \right]\to 0$ as  $n\to \infty$. Further, we deduce 
\begin{align}
	\E\left[ \left(\int_0^t  \left(  \Phi(s)\d
	\W(s) ,  \P_{1/n}\P_{1/n} \u(s) - \u(s)  \right)\right)^2 \right] \to 0 \;\;\; \text{ as } \;\;\; n\to \infty,
\end{align}
which implies
\begin{align}\label{eqn-lim-4}
\lim_{n\to \infty}	\int_0^t  \left( \P_{1/n} \Phi(s)\d
	\W(s) ,  \P_{1/n} \u(s)  \right) = \int_0^t  \left(  \Phi(s)\d
	\W(s) ,  \u(s)  \right), \;\;\; \mathbb{P}\text{-a.s.}
\end{align}

Using \eqref{eqn-lim-1}-\eqref{eqn-lim-2}, \eqref{eqn-lim-3}  and \eqref{eqn-lim-4} in \eqref{eqn-in-n}, we immediately obtain 
\begin{align}\label{345}
	 \|\u(t)\|^2_{\H} 
 & = \|\u_0\|^2_{\H} - 2  \int_{0}^{t}   \left\langle \G_0(s) ,   \u(s)\right\rangle \d s 
	+  2 \int_0^t  \left(  \Phi(s)\d
	\W(s) ,    \u(s)  \right)
	  +   \int_0^t\|\Phi(s)\|_{\mathcal{L}_{\Q}}^2\d s,   \mathbb{P}\text{-a.s.}
\end{align}
 Taking expectation and noting the fact that the final term in the right hand side of the equality \eqref{345} is a martingale, we find 
\begin{align}
\E\left[\|\u(t)\|_{\H}^2\right]&=\E\left[\|{\u_0}\|_{\H}^2\right]-2\E\left[\int_0^t\langle{\mathrm{G}_0}(s),\u(s)\rangle\d s\right]+\E\left[\int_0^t\|\Phi(s)\|_{\mathcal{L}_{\Q}}^2\d s\right].
\end{align}
Thus an application of It\^o's formula to the process $e^{-2\varrho t}\|\u(\cdot)\|_{\H}^2$ yields 
	\begin{align}\label{4.45}
	\E\left[e^{-2\varrho t}\|\u(t)\|_{\H}^2\right]&=\E\left[\|\u_0\|_{\H}^2\right]-\E\left[\int_0^te^{-2\varrho s}\langle 2\G_0(s)+2\varrho \u(s),\u(s)\rangle\d
	s\right]\nonumber\\&\quad+\E\left[\int_0^t
	e^{-2\varrho s}\|\Phi(s)\|_{\mathcal{L}_{\Q}}^2\d s\right],
	\end{align}
	for all $t\in[0,T]$. Note that the initial value
	$\u^n(0)$ converges to $\u_0$ strongly in $\mathrm{L}^2(\Omega;\H)$, that is,
	\begin{align}\label{4.46}
	\lim_{n\to\infty}\E\left[\|\u^n(0)-\u_0\|_{\H}^2\right]=0.
	\end{align}

	\noindent\textbf{Step (5):} \emph{Minty-Browder
		technique and global strong solution.} It is now left to show that $$\G(\u(\cdot))=\G_0(\cdot)\ \text{ and }\ 
	\Phi(\cdot,\u(\cdot))=\Phi(\cdot).$$ In order to do this, we make use of Lemma \ref{lem3.6}.  For
	$\v\in\mathrm{L}^2(\Omega;\mathrm{L}^{\infty}(0,T;\H_m))$ with $m<n$, using the local monotonicity result (see (\ref{3.11y})),	we get 
	\begin{align}\label{4.48}
	&\E\bigg[\int_0^{T}e^{-2\varrho t}(2\langle\G(\v(t))-\G(\u^n(t)),\v(t)-\u^n(t)\rangle
	+2\varrho \left(\v(t)-\u^n(t),\v(t)-\u^n(t)\right))\d
	t\bigg]\nonumber\\&\geq \E\left[\int_0^{T}e^{-2\varrho t}\|\Phi^n(t,
	\v(t)) - \Phi^n(t,\u^n(t))\|^2_{\mathcal{L}_{\Q}}\d
	t\right].
	\end{align}
Rearranging the terms in \eqref{4.48} and then using the energy	equality (\ref{4.39}), we obtain 
	\begin{align}\label{4.49}
	&\E\left[\int_0^{T}e^{-2\varrho t}\langle 2\G(\v(t))+2\varrho \v(t),\v(t)-\u^n(t)\rangle\d
	t\right]\nonumber\\&\quad-\E\left[\int_0^{T}e^{-2\varrho t}\|\Phi^n(t,
	\v(t))\|^2_{\mathcal{L}_{\Q}}
	\d
	t\right]+2\E\left[\int_0^{T}e^{-2\varrho t}\left(\Phi^n(t,
	\v(t)),
	\Phi^n(t,\u^n(t))\right)_{\mathcal{L}_{\Q}}\d
	t\right]\nonumber\\&\geq
	\E\left[\int_0^{T}e^{-2\varrho t}\langle 2\G(\u^n(t))+2\varrho \u^n(t),\v(t)\rangle\d
	t\right]\nonumber\\&\quad-\E\left[\int_0^{T}e^{-2\varrho t}\langle 2\G(\u^n(t))+2\varrho \u^n(t),\u^n(t)\rangle\d
	t\right]+\E\left[\int_0^{T}e^{-2\varrho t}\|
	\Phi^n(t,\u^n(t))\|^2_{\mathcal{L}_{\Q}}\d
	t\right]\nonumber\\&=\E\left[\int_0^{T}e^{-2\varrho t}\langle 2\G(\u^n(t))+2\varrho \u^n(t),\v(t)\rangle\d
	t\right]
	+\E\left[e^{-2\varrho T}\|\u^n(T)\|_{\H}^2-\|\u^n(0)\|_{\H}^2\right].
	\end{align}
	Let us now discuss the convergence of the terms involving noise co-efficient. Note that \begin{align}\label{eqn49z}
	&\E\Bigg[\int_0^{T}e^{-2\varrho t}(2\left(\Phi^n(t, \v(t)),
	\Phi^n(t,\u^n(t))\right)_{\mathcal{L}_{\Q}}-\|\Phi^n(t,
	\v(t))\|^2_{\mathcal{L}_{\Q}})\d
	t\Bigg]\nonumber\\& = \E\left[\int_0^{T}e^{-2\varrho t}2\left(\Phi(t,
	\v(t)),
	\Phi^n(t,\u^n(t))\right)_{\mathcal{L}_{\Q}}\d
	t\right]\nonumber\\&\quad+
	\E\left[\int_0^{T}e^{-2\varrho t}2\left(\Phi^n(t, \v(t))-\Phi(t,
	\v(t)),
	\Phi^n(t,\u^n(t))\right)_{\mathcal{L}_{\Q}}\d
	t\right]\nonumber\\&\quad
	-\E\left[\int_0^{T}e^{-2\varrho t}\|\Phi^n(t,
	\v(t))\|^2_{\mathcal{L}_{\Q}}\d t\right]\nonumber\\&\leq
	\E\left[\int_0^{T}e^{-2\varrho t}2\left(\Phi(t, \v(t)),
	\Phi^n(t,\u^n(t))\right)_{\mathcal{L}_{\Q}}\d
	t\right]\nonumber\\&\quad+2C
	\left(\E\left[\int_0^{T}e^{-4\varrho t}\left\|\Phi^n(t,
	\v(t))-\Phi(t, \v(t))\right\|^2_{\mathcal{L}_{\Q}}\d
	t\right]\right)^{1/2} 
	-\E\left[\int_0^{T}e^{-2\varrho t}\|\Phi^n(t,
	\v(t))\|^2_{\mathcal{L}_{\Q}}\d t\right],
	\end{align}
	where $C=
	{\left(\E\left[\int_0^{T}e^{-4\varrho t}\left\|\Phi^n(t,
		\u^n(t))\right\|^2_{\mathcal{L}_{\Q}}\d
		t\right]\right)^{1/2}}$. Then, applying the weak convergence of $\{\Phi^n(\cdot,\u^n(\cdot)):n\in\mathbb{N}\}$  given in 
	(\ref{4.43z}) 
	to the first term and using Lebesgue's dominated convergence theorem to
	the second and final terms on the right hand side of the inequality
	(\ref{eqn49z}), we deduce that 
	\begin{align}\label{4.50}
	&\E\Bigg[\int_0^{T}e^{-2\varrho t}(2\left(\Phi^n(t, \v(t)),
	\Phi^n(t,\u^n(t))\right)_{\mathcal{L}_{\Q}}-\|\Phi^n(t,
	\v(t))\|^2_{\mathcal{L}_{\Q}})\d
	t\Bigg]\nonumber\\& \to
	\E\left[\int_0^{T}e^{-2\varrho t}(2(\Phi(t, \v(t)),
	\Phi(t))_{\mathcal{L}_{\Q}}-\|\Phi(t,
	\v(t))\|^2_{\mathcal{L}_{\Q}})\d t\right],
	\end{align}
	as $n\to\infty$. Taking liminf on both sides of (\ref{4.49}),
	and using (\ref{4.50}), we obtain
	\begin{align}\label{4.51}
	&\E\left[\int_0^{T}e^{-2\varrho t}\langle 2\G(\v(t))+2\varrho \v(t),\v(t)-\u(t)\rangle\d
	t\right]\nonumber\\&\quad-\E\left[\int_0^{T}e^{-2\varrho t}\|\Phi(t,
	\v(t))\|^2_{\mathcal{L}_{\Q}} \d
	t\right]+2\E\left[\int_0^{T}e^{-2\varrho t}\left(\Phi(t,
	\v(t)), \Phi(t)\right)_{\mathcal{L}_{\Q}}\d
	t\right]\nonumber\\&\geq\E\left[\int_0^{T}e^{-2\varrho t}\langle 2\G_0(t)+2\varrho \u(t),\v(t)\rangle\d
	t\right]
	+\liminf_{n\to\infty}\E\left[e^{-2\varrho T}\|\u^n(T)\|_{\H}^2-\|\u^n(0)\|_{\H}^2\right].
	\end{align}
	Making use of the lower semicontinuity property of the $\H$-norm and the strong convergence given in
	(\ref{4.46}), the second term on the right hand side of the
	inequality (\ref{4.51}) satisfies:
	\begin{align}\label{4.52}
	&\liminf_{n\to\infty}\E\left[e^{-2\varrho T}\|\u^n(T)\|_{\H}^2-\|\u^n(0)\|_{\H}^2\right]\geq
	\E\left[e^{-2\varrho T}\|\u(T)\|^2_{\H}-\|\u_0\|^2_{\H}\right].
	\end{align}
	We use   the energy equality (\ref{4.45}) and (\ref{4.52}) in
	(\ref{4.51}) to have  
	\begin{align}\label{4.53}
	&\E\left[\int_0^{T}e^{-2\varrho t}\langle 2\G(\v(t))+2\varrho \v(t),\v(t)-\u(t)\rangle\d
	t\right]\nonumber\\&\geq\E\left[\int_0^{T}e^{-2\varrho t}\|\Phi(t,
	\v(t))\|^2_{\mathcal{L}_{\Q}} \d
	t\right]-2\E\left[\int_0^{T}e^{-2\varrho t}\left(\Phi(t,
	\v(t)), \Phi(t)\right)_{\mathcal{L}_{\Q}}\d
	t\right]\nonumber\\&\quad+\E\left[\int_0^{T}e^{-2\varrho t}\|\Phi(t)\|^2_{\mathcal{L}_{\Q}}\d t\right]
	+\E\left[\int_0^{T}e^{-2\varrho t}\langle2\G_0(t)+\varrho \u(t),\v(t)-\u(t)\rangle\d
	t\right].
	\end{align}
	Rearranging the terms in (\ref{4.53}), we obtain
	\begin{align}\label{4.54}
	&\E\left[\int_0^{T}e^{-2\varrho t}\langle 2\G(\v(t))-2\G_0(t)+2\varrho (\v(t)-\u(t)),\v(t)-\u(t)\rangle\d
	t\right]\nonumber\\&\geq
	\E\Bigg[\int_0^{T}e^{-2\varrho t}\|\Phi(t,
	\v(t))-\Phi(t)\|^2_{\mathcal{L}_{\Q}}
	\d t\Bigg]\geq 0.
	\end{align}
	Note that the estimate (\ref{4.54}) holds true for any	$\v\in\mathrm{L}^2(\Omega;\mathrm{L}^{\infty}(0,T;\H_m))$ and for any
	$m\in\mathbb{N}$, since the estimate is independent of
	$m$ and $n$. Using a density argument, the
	inequality (\ref{4.54}) remains true for any
\begin{align*}\v&\in\mathrm{L}^2(\Omega;\mathrm{L}^{\infty}(0,T;\H)\cap\mathrm{L}^2(0,T;\V))\cap\mathrm{L}^{r+1}(\Omega;\mathrm{L}^{r+1}(0,T;\widetilde{\L}^{r+1}))=:\mathcal{G}.\end{align*} 
	Indeed, for any $\v\in\mathcal{G}$,
	there exists a strongly convergent subsequence
	$\v_m\in\mathcal{G}$, which satisfies the
	inequality (\ref{4.54}).
	Taking $\v(\cdot)=\u(\cdot)$ in (\ref{4.54}) immediately gives
	$\Phi(\cdot,\v(\cdot))=\Phi(\cdot)$. Next, we take
	$\v(\cdot)=\u(\cdot)+\lambda\w(\cdot)$, $\lambda>0$, where
	$\w\in\mathcal{G},$ and substitute for
	$\v$ in (\ref{4.54}) to find
	\begin{align}\label{4.55}
	\E\left[\int_0^{T}e^{-2\varrho t}\langle \G(\u(t)+\lambda\w(t))-\G_0(t)+\varrho \lambda\w(t),\lambda\w(t)\rangle\d
	t\right]\geq 0.
	\end{align}
	Dividing the above inequality by $\lambda$,  using the hemicontinuity property of
	$\G(\cdot)$ (see Lemma \ref{lem2.8}), and passing $\lambda\to 0$, we obtain
	\begin{align}\label{4.56}
	\E\left[\int_0^{T}e^{-2\varrho t}\langle\G(\u(t))-\G_0(t),\w(t)\rangle\d
	t\right]\geq 0,
	\end{align}
	since the final term in (\ref{4.55}) tends to $0$ as $\lambda\to0$.
	Thus from (\ref{4.56}), we get  $\G(\u(t))=\G_0(t)$ and hence $\u(\cdot)$ is a strong solution of the
	system (\ref{32}) and
	$\u\in\mathcal{G}$. From \eqref{345}, it is immediate that $\u(\cdot)$ satisfy the following energy equality (It\^o's formula): 
	\begin{align}\label{3.63}
&	\|\u(t)\|_{\H}^2+2\mu \int_0^t\|\nabla\u(s)\|_{\H}^2\d s +2\alpha \int_0^t\|\u(s)\|_{\H}^2\d s +2\beta\int_0^t\|\u(s)\|_{\widetilde{\L}^{r+1}}^{r+1}\d s\nonumber\\&=\|{\u_0}\|_{\H}^2+\int_0^t\|\Phi(s,\u(s))\|_{\mathcal{L}_{\Q}}^2\d s+2\int_0^t(\Phi(s,\u(s))\d\W(s),\u(s)),
	\end{align}
for all $t\in(0,T)$, $\mathbb{P}$-a.s. 	
	Furthermore, since $\u(\cdot)$ satisfies the energy estimate \eqref{eng1} and the energy equality \eqref{3.63}, one can show  that the $\mathscr{F}_t$-adapted paths of $\u(\cdot)$ are continuous with trajectories in  $\C([0,T];\H)\cap\mathrm{L}^2(0,T;\V))\cap\mathrm{L}^{r+1}(0,T;\widetilde{\L}^{r+1})$, $\mathbb{P}$-a.s. (see \cite{GK1,Me,GK2}, etc).

	\vskip 0.2cm 
	\noindent\textbf{Step (6):} \emph{Uniqueness.} Finally, we show that the strong solution established in step (5) is unique. Let $\u_1(\cdot)$ and $\u_2(\cdot)$ be two strong solutions of the system (\ref{32}). For $N>0$, let us define 
	\begin{align*}
	\tau_N^1&=\inf_{0\leq t\leq T}\Big\{t:\|\u_1(t)\|_{\H}\geq N\Big\},\ \tau_N^2=\inf_{0\leq t\leq T}\Big\{t:\|\u_2(t)\|_{\H}\geq N\Big\}\text{ and }\tau_N:=\tau_N^1\wedge\tau_N^2.
	\end{align*}
	Using the energy estimate \eqref{eng1}, it can  be shown in a similar way as in step (1), Proposition \ref{prop-EE2} that $\tau_N\to T$ as $N\to\infty$, $\mathbb{P}$-a.s. Let us define $\w(\cdot):=\u_1(\cdot)-\u_2(\cdot)$ and $\widetilde{\Phi}(\cdot):=\Phi(\cdot,\u_1(\cdot))-\Phi(\cdot,\u_2(\cdot))$. Then, $\w(\cdot)$ satisfies the following system:
	\begin{equation}
	\left\{
	\begin{aligned}
	\d\w(t)&=-\left[\mu \A\w(t) +\B(\u_1(t))-\B(\u_2(t)) + \alpha\w(t) +\beta(\mathcal{C}(\u_1(t))-\mathcal{C}(\u_2(t)))\right]\d t
	\\  & \quad +\widetilde{\Phi}(t)\d\W(t),\\
	\w(0)&=\w_0.
	\end{aligned}
	\right.
	\end{equation}
Then, $\w(\cdot)$ satisfies the following energy equality: 
	\begin{align}\label{4.59}
	&\|\w(\s)\|_{\H}^2 +2\mu \int_0^{\t}\|\nabla\w(s)\|_{\H}^2\d s +2\alpha \int_0^{\t}\|\w(s)\|_{\H}^2\d s\nonumber\\&=\|\w(0)\|_{\H}^2 -2\int_0^{\t}\langle\B(\u_1(s))-\B(\u_2(s)),\w(s)\rangle\d s\nonumber\\&\quad-2\int_0^{\t}\langle\mathcal{C}(\u_1(s))-\mathcal{C}(\u_2(s)),\u_1(s)-\u_2(s)\rangle\d s+\int_0^{\t}\|\wi\Phi(s)\|_{\mathcal{L}_{\Q}}^2\d s\nonumber\\&\quad+2\int_0^{\t}(\widetilde{\Phi}(s)\d\W(s),\w(s)).
	\end{align}
	From (\ref{2.30}), we obtain 
	\begin{align*}
	|\langle \B(\u_1)-\B(\u_2),\w\rangle|\leq\frac{\mu }{2}\|\nabla\w\|_{\H}^2+\frac{\beta}{2}\||\u_2|^{\frac{r-1}{2}}\w\|_{\H}^2+\varrho\|\w\|_{\H}^2.
	\end{align*}
	and from \eqref{2.27}, we get 
	\begin{align*}
\beta	\langle\mathcal{C}(\u_1)-\mathcal{C}(\u_2),\w\rangle\geq \frac{\beta}{2}\||\u_2|^{\frac{r-1}{2}}\w\|_{\H}^2.
	\end{align*}
	Thus, using the above two estimates in   (\ref{4.59}), we infer that 
	\begin{align}\label{4.60}
	&\|\w(\s)\|_{\H}^2+\mu \int_0^{\t}\|\nabla\w(s)\|_{\H}^2\d s +2\alpha \int_0^{\t}\|\w(s)\|_{\H}^2\d s \nonumber\\&\leq\|\w(0)\|_{\H}^2 +2\varrho\int_0^{\t}\|\w(s)\|_{\H}^2\d s +\int_0^{\t}\|\widetilde{\Phi}(s)\|^2_{\mathcal{L}_{\Q}}\d
	s +2\int_0^{\t}(\widetilde{\Phi}(s)\d\W(s),\w(s)).
	\end{align}
	It should be noted that the final term in the right hand side of the inequality (\ref{4.60}) is a martingale. Taking expectation in (\ref{4.60}), and  then using  Hypothesis \ref{hyp} (H.2), we obtain  
	\begin{align}\label{4.62}
	\E\left[\|\w(\s)\|_{\H}^2\right]
	%+\mu \int_0^{\s}\|\w(s)\|_{\V}^2\d s\right]
	&\leq \E\left[\|\w(0)\|_{\H}^2\right]+(L+2\varrho)\E\left[\int_0^{\s}\|\w(s)\|_{\H}^2\d s\right]\\
	&\leq  \E\left[\|\w(0)\|_{\H}^2\right]+(L+2\varrho)\int_0^{t}\E\left[\|\w(s\land \tau_N)\|_{\H}^2\right]\d s.
	\end{align}
	Applying  Gr\"onwall's inequality in (\ref{4.62}),  we arrive at 
	\begin{align}\label{4.63}
	&\E\left[\|\w(\s)\|_{\H}^2\right]\leq \E\left[\|\w_0\|_{\H}^2\right]e^{(L+2\varrho)T}.
	\end{align}
	Thus the initial data  $\u_1(0)=\u_2(0)=\u_0$ leads to $\w(\s)=0$, $\mathbb{P}$-a.s. But using the fact that $\tau_N\to T$, $\mathbb{P}$-a.s., implies $\w(t)=0$ and hence $\u_1(t) = \u_2(t)$, $\mathbb{P}$-a.s., for all $t \in[0, T ]$,  and hence the uniqueness follows.
\end{proof}

\begin{remark}
	One might wonder whether the operator $\mathrm{P}_{1/n}$ defined in \eqref{3.32} could also be employed for the existence results. However, although $\mathrm{P}_{1/n}$ is self-adjoint, it is not a projection. Therefore, it cannot replace $\mathrm{P}_{n}$ from \eqref{eqn-projection}, which is crucially used in Step $\mathbf{(1)}$ of the proof of Theorem \ref{exis}.
	\end{remark}

\begin{remark}
	Recently authors in \cite{GK2} (Theorem 1) obtained It\^o's formula for processes taking values in intersection of finitely many Banach spaces. But it seems to us that this result may not be applicable in our context for establishing the energy equality \eqref{3.63}, as our operators $\B(\cdot),\mathcal{C}(\cdot):\V\cap\widetilde{\L}^{r+1}\to\V'+\widetilde{\L}^{\frac{r+1}{r}}$ (see \eqref{212}) and one can show the local integrability in the sum of Banach spaces only. 
\end{remark}
\begin{theorem}\label{exis1}
%Let $\mathcal{O}\subseteq\R^d$, $2\leq d\leq 4$ be an open domain.	 
For $r=3$ and $2\beta\mu \geq 1$,	let $\u_0\in \mathrm{L}^{4+\eta}(\Omega;\H)$ (for some $\eta>0$) be given.  Then under Hypothesis \ref{hyp}, there exists a \emph{pathwise unique strong solution}
	$\u(\cdot)$ to the system (\ref{32}) such that \begin{align*}\u&\in\mathrm{L}^{4+\eta}(\Omega;\mathrm{L}^{\infty}(0,T;\H))\cap\mathrm{L}^{2}(\Omega;\mathrm{L}^2(0,T;\V))\cap\mathrm{L}^{4}(\Omega;\mathrm{L}^{4}(0,T;\widetilde{\L}^{4})),\end{align*} with $\mathbb{P}$-a.s., continuous trajectories in $\H$.
\end{theorem}
\begin{proof}
	A proof of the Theorem \ref{exis1} follows similarly as in the Theorem \ref{exis}, by using the global monotonicity result \eqref{218} and the fact that 
	\begin{align}
	&\int_0^T\langle\mathrm{G}(\u(t))-\mathrm{G}(\v(t)),\u(t)-\v(t)\rangle +\frac{L}{2}\int_0^T\|\u(t)-\v(t)\|_{\H}^2\d t\nonumber\\&\geq \frac{1}{2}\int_0^T\|\Phi(t, \u(t)) - \Phi(t,	\v(t))\|^2_{\mathcal{L}_{\Q}}\d t,
	\end{align}
	for $2\beta\mu \geq 1$. Uniqueness also follows easily by using the estimate \eqref{232}.
\end{proof}

\begin{remark}\label{rem3.7}
	1.	For $d=2$, $r\in[1,3]$ and $\u_0\in\mathrm{L}^4(\Omega;\H)$, one can obtain the existence an uniqueness of pathwise strong solution $$\u\in\mathrm{L}^4\left(\Omega;\mathrm{L}^{\infty}\left(0,T;\H\right)\right)\cap\mathrm{L}^2\left(\Omega;\mathrm{L}^2\left(0,T;\V\right)\right)$$ with $\mathbb{P}$-a.s.  paths in $\C([0,T];\H)\cap\mathrm{L}^2(0,T;\V)$ to the system \eqref{32} can be obtained by using local monotonicity result given in \eqref{fe2} (see \cite{MTM,MTM6}, etc for similar techniques). 
	
	2. If the  domain is an $d$-dimensional torus, then one can approximate functions in $\L^p$-spaces using the truncated Fourier expansions in the following way (see \cite[Theorem 1.6, Chapter 1, pp. 27]{JCR4} and \cite[Theorem 5.2]{KWH}). Let $\mathcal{O}=[0,2\pi]^d$ and $\mathcal{Q}_k:=[-k,k]^d\cap\mathbb{Z}^d$. For every $\w\in\L^1(\mathcal{O})$ and every $k\in\mathbb{N}$, we define $$R_k(\w):=\sum_{m\in\mathcal{Q}_k}\widehat{\w}_je^{\iota m\cdot x},$$ where the Fourier coefficients $\widehat{\w}_j$ are given by $\widehat{\w}_j:=\frac{1}{|\mathcal{O}|}\int_{\mathcal{O}}\w_j(x)e^{-\iota m\cdot x}\d x$. Then, for every $1<p<\infty$, there exists a constant $C_p$, independent of $k$, such that $$\|R_k\w\|_{\L^p(\mathcal{O})}\leq C_p\|\w\|_{\L^p(\mathcal{O})}, \ \text{ for all } \ \w\in\L^p(\mathcal{O}),$$ and $$\|R_k\w-\w\|_{\L^p(\mathcal{O})}\to 0\ \text{ as }\ k\to\infty.$$
\end{remark}
\section{Strong solution}

In this section, we establish the existence of a global strong solution to system \eqref{1} on either the torus or the whole space.
Owing to certain technical challenges, the regularity of strong solutions is derived separately for the torus and the whole space, within different function spaces and under varying regularity assumptions on the initial data and forcing terms.

\subsection{Functional setting for periodic domain}
For $\mathrm{L}>0$, let us consider a $d$-dimensional torus $\mathbb{T}^d=\left(\frac{\R}{\mathrm{L}\mathbb{Z}}\right)^d$, $2\leq d\leq4$.
Let $\C_{\mathrm{p}}^{\infty}(\mathbb{T}^d;\R^d)$ denote the space of all infinitely differentiable  functions $\u$ satisfying periodic boundary conditions $\u(x+\mathrm{L}e_{i},\cdot) = \u(x,\cdot)$, for $x\in \R^d$. \emph{We are not assuming the zero mean condition for the velocity field unlike the case of NSE, since the absorption term $\beta|\u|^{r-1}\u$ does not preserve this property (see \cite{PAM}). Therefore, we cannot use the well-known Poincar\'e inequality and we have to deal with the  full $\H^1$-norm.} The Sobolev space  $\H_{\mathrm{p}}^s(\mathbb{T}^d):=\mathrm{H}_{\mathrm{p}}^s(\mathbb{T}^d;\mathbb{R}^d)$ is the completion of $\C_{\mathrm{p}}^{\infty}(\mathbb{T}^d;\R^d)$  with respect to the $\H^s$-norm and the norm on the space $\H_{\mathrm{p}}^s(\mathbb{T}^d)$ is given by $$\|\u\|_{{\H}^s_{\mathrm{p}}}:=\left(\sum_{0\leq|\boldsymbol\alpha|\leq s}\|\D^{\boldsymbol\alpha}\u\|_{\mathbb{L}^2(\mathbb{T}^d)}^2\right)^{1/2}.$$ 	
It is known from \cite{JCR1} that the Sobolev space of periodic functions $\H_{\mathrm{p}}^s(\mathbb{T}^d)$, for $s\geq0$, can be defined as
$$\H_{\mathrm{f}}^s(\mathbb{T}^d)=\left\{\u:\u=\sum_{k\in\mathbb{Z}^d}\u_{k}\mathrm{e}^{2\pi i k\cdot x /  \mathrm{L}},\ \overline{\u}_{k}=\u_{-k}, \  \|\u\|_{{\H}^s_\mathrm{f}}:=\left(\sum_{k\in\mathbb{Z}^d}(1+|k|^{2s})|\u_{k}|^2\right)^{1/2}<\infty\right\}.$$ We infer from \cite[Proposition 5.38]{JCR1} that the norms $\|\cdot\|_{{\H}^s_{\mathrm{p}}}$ and $\|\cdot\|_{{\H}^s_f}$ are equivalent. Let us define 
\begin{align*} 
	\mathcal{V}:=\{\u\in\C_{\mathrm{p}}^{\infty}(\mathbb{T}^d;\R^d):\nabla\cdot\u=0\}.
\end{align*}
We define the spaces $\H$ and $\widetilde{\L}^{p}$ as the closure of $\mathcal{V}$ in the Lebesgue spaces $\mathrm{L}^2(\mathbb{T}^d;\R^d)$ and $\mathrm{L}^p(\mathbb{T}^d;\R^d)$ for $p\in(2,\infty)$, respectively. We also define the space $\V$ as the closure of $\mathcal{V}$ in the Sobolev space $\mathrm{H}^1(\mathbb{T}^d;\R^d)$. Then, we characterize the spaces $\H$, $\widetilde{\L}^p$ and $\V$ with the norms  $$\|\u\|_{\H}^2:=\int_{\mathbb{T}^d}|\u(x)|^2\d x,\quad \|\u\|_{\widetilde{\L}^p}^p:=\int_{\mathbb{T}^d}|\u(x)|^p\d x\ \text{ and }\ \|\u\|_{\V}^2:=\int_{\mathbb{T}^d}(|\u(x)|^2+|\nabla\u(x)|^2)\d x,$$ respectively.  We define the Stokes operator $\A\u:=-\mathcal{P}\Delta\u=-\Delta\u,\;\u\in\D(\A)=\V\cap{\H}^{2}_\mathrm{p}(\mathbb{T}^d). $

Functional setting for whole space has be given in Subsection \ref{Function-spaces}. For this section now onward, we set $\mathcal{O}:=\mathbb{T}^d$ or $\mathbb{R}^d$.

In  periodic domains or on the whole space $\R^d,$ the operators $\mathcal{P}$ and $-\Delta$ commute, so that we can use \eqref{3} and we have the following result  also (see \cite[Lemma 2.1]{KWH}): 
\begin{align}\label{370}
	0&\leq\int_{\mathcal{O}}|\nabla\u(x)|^2|\u(x)|^{r-1}\d x\leq\int_{\mathcal{O}}|\u(x)|^{r-1}\u(x)\cdot\A\u(x)\d x\nonumber\\&\leq r\int_{\mathcal{O}}|\nabla\u(x)|^2|\u(x)|^{r-1}\d x.
\end{align}
Note that the estimate \eqref{370} is true even in bounded domains (with Dirichlet boundary conditions) if one replaces $\A\u$ with $-\Delta\u$ and \eqref{371}-\eqref{3a71} (see below) holds true in bounded domains as well as on the whole space $\R^d$.
Since we are not assuming zero mean condition, using the Sobolev embedding and  \cite[Lemma 2]{JCRWS}, we infer
\begin{align}
	\|\u\|^{p+q}_{\L^{\frac{(p+q)d}{d-p}}(\mathcal{O})}&=\||\u|^{\frac{p+q}{p}}\|_{\L^{\frac{pd}{p-d}}(\mathcal{O})}^p\leq C\||\u|^{\frac{p+q}{p}}\|_{\mathrm{W}^{1,p}(\mathcal{O})}^p\\&=C\left(\|\nabla(|\u|^{\frac{p+q}{p}})\|_{\L^p(\mathcal{O})}^p+\|\u\|_{\L^{p+q}(\mathcal{O})}^{p+q}\right)\nonumber\\&\leq C\left(\||\u|^{\frac{q}{p}}|\nabla\u|\|_{\L^p(\mathcal{O})}^p+\|\u\|_{\L^{p+q}(\mathcal{O})}^{p+q}\right)\nonumber\\&= C\bigg(\int_{\mathcal{O}}|\nabla\u(x)|^{p}|\u(x)|^q\d x+
	\int_{\mathcal{O}}|\u(x)|^{p+q}\d x\bigg),
\end{align}
for all $\u\in\W^{1,m}_0(\mathcal{O})$  with $m=d(p+q)/(d+q),p<d$. One can handle the case $d=2$ in the following way: Let us take $\u\in\C_{\mathrm{p}}^{\infty}(\mathcal{O})$. Then $|\u|^{p/2}\in\H_{\mathrm{p}}^1(\mathcal{O})$, for all $p\in[2,\infty)$, and from the Sobolev embedding, $\H_{\mathrm{p}}^1(\mathcal{O})\hookrightarrow\L^p(\mathcal{O})$, for all $p\in[2,\infty)$, we find 
\begin{align*}
	\|\u\|^{r+1}_{\L^{p(r+1)}(\mathcal{O})}&=\||\u|^{\frac{r+1}{2}}\|_{\L^{2p}(\mathcal{O})}^2\leq C\||\u|^{\frac{r+1}{2}}\|_{\H_{\mathrm{p}}^1(\mathcal{O})}^2\nonumber\\&\leq C\bigg(\int_{\mathcal{O}}|\nabla|\u(x)|^{\frac{r+1}{2}}|^2\d x+
	\int_{\mathcal{O}}||\u(x)|^{\frac{r+1}{2}}|^2\d x\bigg).
\end{align*}
Buy using elementary calculus identity $\partial_k(|\u|^{r+1})=(r+1)u_k(\partial u_k)|\u|^{r-1}$, where $\partial_k$ denotes the $k$-th partial derivative, we infer that $|\nabla|\u|^{\frac{r+1}{2}}|^2\leq C_r|\u|^{r-1}|\nabla\u|^2$ (see the proof of \cite[Lemma 1]{JCRWS}). Thus, we further have 
\begin{align}\label{3a71}
	\|\u\|^{r+1}_{\L^{p(r+1)}(\mathcal{O})}\leq C\bigg(\int_{\mathcal{O}}|\nabla\u(x)|^2|\u(x)|^{r-1}\d x+ \int_{\mathcal{O}}|\u(x)|^{r+1}\d x\bigg),
\end{align}
for any $p\in[2,\infty)$ and $r\geq1$. Similarly, for $d=3$, by the Sobolev embedding $\H_{\mathrm{p}}^1(\mathcal{O})\hookrightarrow\L^6(\mathcal{O})$, we find
\begin{align}\label{371}
	\|\u\|^{r+1}_{\L^{3(r+1)}(\mathcal{O})}&=\||\u|^{\frac{r+1}{2}}\|_{\L^{6}(\mathcal{O})}^2\leq C\||\u|^{\frac{r+1}{2}}\|_{\H_{\mathrm{p}}^1(\mathcal{O})}^2
	\nonumber\\&\leq C\bigg(\int_{\mathcal{O}}|\nabla\u(x)|^2|\u(x)|^{r-1}\d x+ \int_{\mathcal{O}}|\u(x)|^{r+1}\d x\bigg), 
\end{align}
for all $\u\in\D(\A)$ and for $r\geq1$. For $d=4$, we obtain 
\begin{align}\label{3p71}
	\|\u\|_{\widetilde{\L}^{2(r+1)}}^{r+1}=\||\u|^{\frac{r+1}{2}}\|_{\L^{4}(\mathcal{O})}^2\leq C\bigg(\int_{\mathcal{O}}|\nabla\u(x)|^2|\u(x)|^{r-1}\d x+\int_{\mathcal{O}}|\u(x)|^{r+1}\d x\bigg), 
\end{align}
for all $\u\in\D(\A)$ and $r\geq1$.   We also assume that the noise coefficient satisfies the following: 
\begin{hypothesis}\label{hyp2}
	There exist a positive
	constant $\widetilde{K}$ such that for all $t\in[0,T]$ and $\u\in\V$, 
	\begin{equation*}
	\|\nabla\Phi(t, \u)\|^{2}_{\mathcal{L}_{\Q}} 
	\leq \widetilde{K}\left(1 +\|\u\|_{\V}^{2}\right).
	\end{equation*}
\end{hypothesis}
\begin{theorem}\label{thm3.10}
Let $2\leq d\leq 4$ and $\u_0\in \mathrm{L}^2(\Omega;\V)$ be given.  Then {under Hypotheses \ref{hyp} and \ref{hyp2}}, for $r\geq 3$ ($2\beta\mu\geq 1$ for $r=3$), the pathwise unique strong solution	$\u(\cdot)$ to the system (\ref{32}) satisfies the following regularity:   \begin{align}\label{reg}
	\u&\in\mathrm{L}^2\left(\Omega;\mathrm{L}^{\infty}\left(0,T;\V\right)\cap\mathrm{L}^2\left(0,T;\D(\A)\right)\right)\cap\mathrm{L}^{r+1}\big(\Omega;\mathrm{L}^{r+1}\big(0,T;\widetilde{\L}^{p(r+1)}\big)\big),
\end{align} where $p\in[2,\infty)$ for $d=2$, $p=3$ for $d=3$, and $p=2$ for $d=4$, with $\mathscr{F}_t$-adapted paths of $\u\in\C([0,T];\V)\cap\mathrm{L}^2(0,T;\D(\A))\cap\mathrm{L}^{r+1}(0,T;\widetilde{\L}^{p(r+1)})$, $\mathbb{P}$-a.s.
\end{theorem}
\begin{proof}
Let us take $\u_0\in\mathrm{L}^2(\Omega;\V)$  to obtain the further regularity results of the strong solution to \eqref{32} with $r\geq 3$. In order to do this, we first consider the finite dimensional system given in \eqref{4.7}. Applying the finite dimensional It\^o formula to the process $\|\nabla\u^n(\cdot)\|_{\H}^2$, we obtain 
\begin{align}\label{362}
	&	\|\nabla\u^n(t)\|_{\H}^2+2\mu \int_0^t\|\A\u^n(s)\|_{\H}^2\d s + 2\alpha \int_0^t\|\nabla\u^n(s)\|_{\H}^2\d s \nonumber\\&=\|\nabla\u^n_0\|_{\H}^2-2\int_0^t(\B(\u^n(s)),\A\u^n(s))\d s	-2\beta\int_0^t(\mathcal{C}(\u^n(s)),\A\u^n(s))\d s\nonumber\\&\quad+\int_0^{t}\|\nabla\Phi(s,\u^n(s))\|^2_{\mathcal{L}_{\Q}}\d
	s +2\int_0^{t}\left(\nabla\Phi^n(s,\u^n(s))\d\W^n(s),\nabla\u^n(s)\right).
\end{align}
We estimate $|(\B(\u^n(s)),\A\u^n(s))|$ using H\"older's, and Young's inequalities as 
\begin{align}\label{3.73}
	|(\B(\u^n),\A\u^n)|&\leq\||\u^n||\nabla\u^n|\|_{\H}\|\A\u^n\|_{\H}\leq\frac{\mu}{2}\|\A\u^n\|_{\H}^2+\frac{1}{2\mu }\||\u^n||\nabla\u^n|\|_{\H}^2. 
\end{align}
For $r>3$, we  estimate the final term from \eqref{3.73} using H\"older's and Young's inequalities as 
\begin{align*}
	&	\int_{\mathcal{O}}|\u^n(x)|^2|\nabla\u^n(x)|^2\d x\nonumber\\&=\int_{\mathcal{O}}|\u^n(x)|^2|\nabla\u^n(x)|^{\frac{4}{r-1}}|\nabla\u^n(x)|^{\frac{2(r-3)}{r-1}}\d x\nonumber\\&\leq\left(\int_{\mathcal{O}}|\u^n(x)|^{r-1}|\nabla\u^n(x)|^2\d x\right)^{\frac{2}{r-1}}\left(\int_{\mathcal{O}}|\nabla\u^n(x)|^2\d x\right)^{\frac{r-3}{r-1}}\nonumber\\&\leq{\beta\mu }\left(\int_{\mathcal{O}}|\u^n(x)|^{r-1}|\nabla\u^n(x)|^2\d x\right)+\frac{r-3}{r-1}\left(\frac{2}{\beta\mu (r-1)}\right)^{\frac{2}{r-3}}\left(\int_{\mathcal{O}}|\nabla\u^n(x)|^2\d x\right).
\end{align*}
Making use of the estimate \eqref{370} in \eqref{3.73}, taking supremum over time from $0$ to $t$ and then taking  expectation, we have 
\begin{align}\label{363}
	&\E\bigg[\sup_{t\in[0,T]}	\|\nabla\u^n(t)\|_{\H}^2+\mu \int_0^T\|\A\u^n(t)\|_{\H}^2\d t+  2\alpha \int_0^t\|\nabla\u^n(s)\|_{\H}^2\d s 
	\nonumber\\ & \quad +\beta\int_0^T\||\u^n(t)|^{\frac{r-1}{2}}|\nabla\u^n(t)|\|_{\H}^2\d s
	\bigg]\nonumber\\&\leq\E\left[\|\nabla\u^n_0\|_{\H}^2\right]+2\varrho\E\left[\int_0^T\|\nabla\u^n(t)\|_{\H}^2\d t\right]+\E\left[\int_0^{T}\|\nabla\Phi(t,\u^n(t))\|^2_{\mathcal{L}_{\Q}}\d
	t\right] \nonumber\\&\quad+2\E\left[\sup_{t\in[0,T]}\left|\int_0^{t}\left(\nabla\Phi(s,\u^n(s))\d\W(s),\nabla\u^n(s)\right)\right|\right],
\end{align}
where $\varrho$ is defined in \eqref{215}.
Applying Burkholder-Davis-Gundy inequality to the final term appearing in the inequality \eqref{363}, we obtain 
\begin{align}\label{364}
	&2\E\left[\sup_{t\in[0,T]}\left|\int_0^{t}\left(\nabla\Phi(s,\u^n(s))\d\W(s),\nabla\u^n(s)\right)\right|\right]\nonumber\\&\leq 2\sqrt{3}\E\left[\int_0^T\|\nabla\Phi(t,\u^n(t))\|_{\mathcal{L}_{\Q}}^2\|\nabla\u^n(t)\|_{\H}^2\d t\right]^{1/2}\nonumber\\&\leq 2\sqrt{3}\E\left[\sup_{t\in[0,T]}\|\nabla\u^n(t)\|_{\H}\left(\int_0^T\|\nabla\Phi(t,\u^n(t))\|_{\mathcal{L}_{\Q}}^2\d t\right)^{1/2}\right]\nonumber\\&\leq\frac{1}{2}\E\left[\sup_{t\in[0,T]}\|\nabla\u^n(t)\|_{\H}^2\right]+6\E\left[\int_0^T\|\nabla\Phi(t,\u^n(t))\|_{\mathcal{L}_{\Q}}^2\d t\right].
\end{align}
Substituting \eqref{364} in \eqref{363}, we find
\begin{align}\label{365}
	&\E\bigg[\sup_{t\in[0,T]}	\|\nabla\u^n(t)\|_{\H}^2+2\mu \int_0^T\|\A\u^n(t)\|_{\H}^2\d t + 2\alpha \int_0^t\|\nabla\u^n(s)\|_{\H}^2\d s 
	\nonumber\\ & \quad +2\beta\int_0^T\||\u^n(t)|^{\frac{r-1}{2}}|\nabla\u^n(t)|\|_{\H}^2\d t
	\bigg]\nonumber\\&\leq 2\E\left[\|\nabla\u^n_0\|_{\H}^2\right]+4\varrho\E\left[\int_0^T\|\nabla\u^n(t)\|_{\H}^2\d t\right]+14\E\left[\int_0^{T}\|\nabla\Phi(t,\u^n(t))\|^2_{\mathcal{L}_{\Q}}\d
	t\right]\nonumber\\&\leq 2\E\left[\|\nabla\u_0\|_{\H}^2\right]+4\varrho\E\left[\int_0^T\|\nabla\u^n(t)\|_{\H}^2\d t\right]+14\widetilde{K}\E\left[\int_0^{T}(1+\|\u^n(t)\|_{\H}^2+\|\nabla\u^n(t)\|_{\H}^2)\d
	t\right],
\end{align}
where we have used Hypothesis \ref{hyp}. 
Applying Gr\"onwall's inequality in \eqref{365}, we obtain 
\begin{align}\label{366}
	&\E\left[\sup_{t\in[0,T]}	\|\nabla\u^n(t)\|_{\H}^2\right]
	\nonumber\\ & \leq \left\{2\E\left[\|\nabla\u_0\|_{\H}^2\right]+14\widetilde{K}T +14\widetilde{K}T \left(2\E\left[\|\u_0\|_{\H}^2\right]+14KT\right)e^{28KT}\right\}e^{(4\varrho+14\widetilde{K})T},
\end{align}
where we have used \eqref{eng1}. Using \eqref{366} in \eqref{365}, we finally get 
\begin{align}\label{367}
	&\E\left[\sup_{t\in[0,T]}	\|\nabla\u^n(t)\|_{\H}^2+2\mu \int_0^T\|\A\u^n(t)\|_{\H}^2\d t + 4\alpha \int_0^T\|\nabla\u^n(t)\|_{\H}^2\d t +2\beta\int_0^T\|\u^n(t)\|_{\wi\L^{p(r+1)}}^{r+1}\d t  \right]\nonumber\\&  \leq \left\{2\E\left[\|\nabla\u_0\|_{\H}^2\right]+14\widetilde{K}T +14\widetilde{K}T \left(2\E\left[\|\u_0\|_{\H}^2\right]+14KT\right)e^{28KT}\right\}e^{(4\varrho+14\widetilde{K})T},
\end{align}
where $p\in[2,\infty)$ for $d=2$, $p=3$ for $d=3$, and $p=2$ for $d=4$. Note that the right hand side of the inequality \eqref{367} is independent of $n$. Thus, making use of the Banach-Alaoglu theorem, we infer from \eqref{367} that 
\begin{equation}\label{4p40}
	\left\{
	\begin{aligned}
		\u^n&\xrightarrow{w^{*}} \wi\u\ \textrm{ in
		}\ \mathrm{L}^2(\Omega;\mathrm{L}^{\infty}(0,T ;\V)),\\ 
		\u^n&\xrightarrow{w} \wi\u\ \textrm{ in
		}\ \mathrm{L}^2(\Omega;\mathrm{L}^{2}(0,T ;\D(\A))),\\  
		\u^n&\xrightarrow{w} \wi\u\ \textrm{ in
		} \ \mathrm{L}^{r+1}(\Omega;\mathrm{L}^{r+1}(0,T ;\widetilde{\L}^{p(r+1)})).
	\end{aligned}\right.\end{equation}
Because the sequence $\{\u^n\}$ also satisfies the convergence \eqref{4.40} and  the weak limit is unique,  we obtain $\u=\wi\u$, where $\u(\cdot)$ satisfies \eqref{4.5}. Hence, we get the regularity given in \eqref{reg}.

For $r=3$, we estimate $|(\B(\u^n),\A\u^n)|$ as 
\begin{align}\label{2.91}
	|(\B(\u^n),\A\u^n)|&\leq\|(\u^n\cdot\nabla)\u^n\|_{\H}\|\A\u^n\|_{\H}\leq\frac{1}{4\theta}\|\A\u^n\|_{\H}^2+\theta\||\u^n|\nabla\u^n\|_{\H}^2.
\end{align}
A calculation similar to \eqref{365} gives 
\begin{align}\label{2.92}
	&\E\bigg[\sup_{t\in[0,T]}	\|\nabla\u^n(t)\|_{\H}^2+2\left(\mu-\frac{1}{2\theta}\right) \int_0^T\|\A\u^n(t)\|_{\H}^2\d t + 4\alpha \int_0^T\|\nabla\u^n(t)\|_{\H}^2\d t \nonumber\\ & \ \quad +4(\beta-\theta)\int_0^T\||\u^n(t)||\nabla\u^n(t)|\|_{\H}^2\d t\bigg]\nonumber\\& \leq 2\E\left[\|\nabla\u_0\|_{\H}^2\right] +14\widetilde{K}\E\left[\int_0^{T}(1+\|\u^n(t)\|_{\H}^2+\|\nabla\u^n(t)\|_{\H}^2)\d
	t\right],
\end{align} 
For $2\beta\mu \geq 1$,  it is immediate that 
\begin{align}\label{2p94}
	&\E\bigg[\sup_{t\in[0,T]}	\|\nabla\u^n(t)\|_{\H}^2+2\left(\mu-\frac{1}{2\theta}\right)\int_0^T\|\A\u^n(t)\|_{\H}^2\d t+ 4 \alpha \int_0^T\|\nabla\u^n(t)\|_{\H}^2\d t 
	\nonumber\\ & \quad +4(\beta-\theta)\int_0^T\||\u^n(t)||\nabla\u^n(t)|\|_{\H}^2\d t \bigg]\nonumber\\& \leq \left\{2\E\left[\|\nabla\u_0\|_{\H}^2\right]+14\widetilde{K}T +14\widetilde{K}T \left(2\E\left[\|\u_0\|_{\H}^2\right]+14KT\right)e^{28KT}\right\}e^{14\widetilde{K}T}.
\end{align}
Hence $\u^n\in\mathrm{L}^2(\Omega;\mathrm{L}^{\infty}(0,T;\V)\cap\mathrm{L}^2(0,T;\D(\A)))$ and using the estimate \eqref{371}, we also get $\u^n\in\mathrm{L}^{4}(\Omega;\mathrm{L}^{4}(0,T;\widetilde{\L}^{4p}))$, where $p\in[2,\infty)$ for $d=2$, $p=3$ for $d=3$, and $p=2$ for $d=4$. Moreover, the $\mathscr{F}_t$-adapted paths of $\u^n(\cdot)$ are continuous with trajectories in  $\C([0,T];\V)\cap\mathrm{L}^2(0,T;\D(\A))$, $\mathbb{P}$-a.s.
	\end{proof}

\section{Stationary solutions and stability}\label{se5}\setcounter{equation}{0}
In this section, we consider the deterministic stationary system  corresponding to CBF equations. We show the existence and uniqueness of weak solutions to the steady state equations and discuss exponential stability as well as stabilization by noise results. 
\subsection{Existence and uniqueness of weak solutions to stationary system} 
Let us consider the following stationary system: 
\begin{equation}\label{3pp1}
\left\{
\begin{aligned}
-\mu \Delta\u_{\infty}+(\u_{\infty}\cdot\nabla)\u_{\infty}+ \alpha \u_{\infty}+\beta|\u_{\infty}|^{r-1}\u_{\infty}+\nabla p_{\infty}&=\boldsymbol{f}, \ \text{ in } \ \mathcal{O}, \\ \nabla\cdot\u_{\infty}&=0, \ \text{ in } \ \mathcal{O}, \\
\u_{\infty}&=\mathbf{0}\ \text{ on } \ \partial\mathcal{O}.
\end{aligned}
\right.
\end{equation}
Taking the Helmholtz-Hodge orthogonal projection onto the system \eqref{3pp1}, one can write down the abstract formulation of the system \eqref{3pp1} as 
\begin{align}\label{3pp2}
\mu \A\u_{\infty}+\B(\u_{\infty}) + \alpha\u_{\infty} +\beta\mathcal{C}(\u_{\infty})=\f \ \text{ in }\ \V'.
\end{align}
We show that there exists a unique weak solution to the system \eqref{3pp2} in  $\V\cap\widetilde{\L}^{r+1}$, for $r\geq 3$. 
Given any $\f\in\V'$, our problem is to find $\u_{\infty}\in\V\cap\widetilde{\L}^{r+1}$ such that 
\begin{align}\label{3pp3}
\mu (\nabla\u_{\infty},\nabla\v)+\langle\B(\u_{\infty}),\v\rangle + \alpha (\u_{\infty}, \v)+\beta\langle \mathcal{C}(\u_{\infty}),\v\rangle=\langle \f,\v\rangle, \ \text{ for all }\ \v\in\V\cap\widetilde{\L}^{r+1}, 
\end{align}
is satisfied. Our next aim is to discuss the existence and uniqueness of weak solutions of the system \eqref{3pp1}. 
\begin{theorem}\label{thm6.1}
	For every $\f\in\V'$ and $r> 3$, there exists at least one weak solution to the system \eqref{3pp1}, and if 
 	\begin{align}\label{8pp5}
	\alpha>\varrho \ \text{ or }\ \mu>\max\left\{\frac{1}{2\beta},\frac{1}{4\alpha}\right\},
	\end{align}
where $\varrho$ is defined in \eqref{215}, 	then the solution of \eqref{3pp1} is unique. For $r=3$ with $\mu\geq \frac{1}{2\beta}$, there exists a unique weak solution to the system \eqref{3pp1}. 
\end{theorem}
\begin{proof}
	(i) We show the existence of weak solution to the system \eqref{3pp1} (equivalently the existence of \eqref{3pp2}) by applying  the  Faedo-Galerkin approximation technique. Let $\{\bolde_k\}_{k\in\N}$ be eigenfunctions of the compact  operator defined in \eqref{L3}. 
	For a fixed positive integer $m$, we look for a function $\u^m_{\infty}\in\V$ of the form \begin{align}\label{8p4}
	\u_{\infty}^m=\sum_{k=1}^m\xi_m^k\bolde_k,\ \xi_m^k\in\mathbb{R},
\end{align} and 
	\begin{align}\label{8p5}
	\mu (\nabla\u_{\infty}^m,\nabla \bolde_k)+(\B(\u_{\infty}^m),\bolde_k)+ \alpha (\u_{\infty}^m, \bolde_k) +\beta( \mathcal{C}(\u_{\infty}^m),\bolde_k)=\langle\f,\bolde_k\rangle,
	\end{align}
	for   $k=1,\ldots,m$. Equivalently, the equation \eqref{8p5} can also be written as 
	\begin{align}\label{eqn-stationary}
%\G(\u_{\infty}^m)	= 
\mu \A\u_{\infty}^m+ \mathrm{P}_m\B(\u_{\infty}^m)+ \alpha \u_{\infty}^m +\beta \mathrm{P}_m\mathcal{C}(\u_{\infty}^m)=\mathrm{P}_m\f. 
	\end{align}
	Note that the equations \eqref{8p4}-\eqref{8p5} are a system of nonlinear equations for $\xi_m^1,\ldots,\xi_m^m$ and the existence of solutions is proved in the following way. 
	We use \cite[Lemma 1.4, Chapter 2]{Te} to obtain the existence of a solution to the system of equations \eqref{8p4}-\eqref{8p5}. We 
	take $\W=\mathrm{span}\left\{\bolde_1,\ldots,\bolde_m\right\}$ and the scalar product on $\W$ is the scalar product $[\cdot,\cdot]=(\cdot,\cdot)+(\nabla\cdot,\nabla\cdot)$ induced by $\V$ and $\mathrm{P}=\mathrm{P}_m$ is defined by 
	\begin{align}
	[\mathrm{P}_m(\u),\v]& =(\mathrm{P}_m(\u),\v) + (\nabla\mathrm{P}_m(\u),\nabla\v)
	\nonumber\\ & := \mu (\nabla\u,\nabla\v)+ b(\u,\u,\v) + \alpha (\u, \v)+\beta (\mathcal{C}(\u),\v)-\left\langle\f,\v\right\rangle,
	\end{align}
	for all $\u,\v\in \W$. The continuity of the mapping $\mathrm{P}_m:\W\to\W$ is easy to verify, as in finite-dimensional space all norms are equivalent.  In order to apply Lemma 1.4, Chapter 2, \cite{Te}, we need to establish that $$[\mathrm{P}_m(\u),\u]>0, \ \text{ for } \ [\u]=k>0,$$ where $[\cdot]$ denotes the norm on $\W$. Note that it is the norm induced by $\V$. Next, we consider 
	\begin{align}
	[\mathrm{P}_m(\u),\u]&=\mu \|\nabla\u\|_{\H}^2+ \alpha \|\u\|_{\H}^2 +\beta\|\u\|_{\widetilde{\L}^{r+1}}^{r+1}-\left\langle\f,\u\right\rangle
	\nonumber\\ & \geq\frac{\min\{\mu,\alpha\} }{2}[\|\u\|_{\H}^2 + \|\nabla\u\|_{\H}^2]-\frac{1}{2\min\{\mu,\alpha\}  }\|\f\|_{\V'}^2,
	\end{align}
	where we have used the Cauchy-Schwarz inequality  and Young's inequality. 
	It follows that $[\mathrm{P}_m(\u),\u]>0$ for $\|\u\|_{\V}=k$ and $k$ is sufficiently large, more precisely $k>\frac{1}{\min\{\mu,\alpha\} }\|\f\|_{\V'}.$ Hence, the hypotheses of \cite[Lemma 1.4, Chapter 2]{Te} are satisfied and a solution $\u^m_{\infty}$  of \eqref{8p5} exists. 
\iffalse 
From page 529 of \cite{Evans}, we know that a continuous function $\varphi:\mathbb{R}^m\to\R^m$ satisfies $\varphi(x)\cdot x\geq 0$ if $|x|=r$, for some $r>0$. Then, there exists a point $x\in\mathrm{B}(0,r)$ such that $\varphi(x)=0$, where $\mathrm{B}(0,r)$ is closed ball in $\R^m$ with center $0$ and radius $r$. 

	Using this, we show that for each integer $m=1,2,\ldots$, there exists a function $\u^m_{\infty}$ of the form \eqref{8p4} satisfying the identities \eqref{8p5}. Let us define a continuous function $\varphi:\R^m\to\R^m$, $\varphi=(\varphi_1,\ldots,\varphi_m)$ by setting 
	\begin{align}
\varphi^j(\xi):=\mu\int_{\mathcal{O}}\left[\left(\sum_{k=1}^m\xi_k\nabla e_k\right)\cdot\nabla e_k\right]\d x
	\end{align}
	for each point $\xi=(\xi_,\ldots,\xi_m)\in\R^m$.
	\fi 
	
	Multiplying \eqref{8p5} by $\xi_m^k$ and then adding from $k=1,\ldots,m$, we find 
	\begin{align}\label{8.9}
	\mu \|\nabla\u^{m}_{\infty}\|_{\H}^2+ \alpha \|\u^{m}_{\infty}\|_{\H}^2 +\beta\|\u^{m}_{\infty}\|_{\widetilde{\L}^{r+1}}^{r+1}&=(\mathrm{P}_m\f,\u^{m}_{\infty})\leq\|\f\|_{\V'}\|\u^{m}_{\infty}\|_{\V}
	\nonumber\\&\leq\frac{\min\{\mu,\alpha\}}{2}\|\u^{m}_{\infty}\|_{\V}^2+\frac{1}{2\min\{\mu,\alpha\} }\|\f\|_{\V'}^2,
	\end{align}
	where we have used H\"older's and Young's inequalities. From \eqref{8.9}, we deduce that 
	\begin{align}
	\mu \|\nabla\u^{m}_{\infty}\|_{\H}^2+ \alpha \|\u^{m}_{\infty}\|_{\H}^2 +2\beta\|\u^{m}_{\infty}\|_{\widetilde{\L}^{r+1}}^{r+1}\leq\frac{1}{\min\{\mu,\alpha\} }\|\f\|_{\V'}^2.
	\end{align}
	Thus, we get  $\|\u^{m}_{\infty}\|_{\V}^2$ and $\|\u^{m}_{\infty}\|_{\widetilde{\L}^{r+1}}^{r+1}$ are  bounded uniformly and independent of $m$. Since $\V$ and $\widetilde{\L}^{r+1}$ are reflexive, using the Banach-Alaoglu theorem, we can extract a subsequence $\{\u^{m_k}_{\infty}\}$ of $\{\u^{m}_{\infty}\}$ such that 
	\begin{align}
		\u^{m_k}_{\infty}&\xrightarrow{w} \u_{\infty}, \ \text{ in }\ \V, \label{5121}\\
		\u^{m_k}_{\infty}&\xrightarrow{w} \u_{\infty}, \ \text{ in }\ \widetilde{\L}^{r+1}, \label{5122}
	%	\G(\u^{m_k}_{\infty}) &\xrightarrow{w} \wi\G_0, \ \text{ in }\ \V^\prime + \wi\L^{\frac{r+1}{r}},
	\end{align}
	as $k\to\infty$.
Since the embedding $\H^1_{\mathrm{loc}}(\mathcal{O})\subset \L^2_{\mathrm{loc}}(\mathcal{O})$ is compact, one can extract a subsequence $\{\u^{m_{k_j}}_{\infty}\}$ of $\{\u^{m_k}_{\infty}\}$ such that 
	\begin{align}\label{5123}
	\u^{m_{k_j}}_{\infty}\to \u_{\infty}, \ \text{ in }\ \L^2_{\mathrm{loc}}(\mathcal{O}), 
	\end{align}
as $j\to\infty$. Next we aim to pass limit in 
\begin{align}\label{3pp4}
	\mu (\nabla\u_{\infty}^{m_{k_j}},\nabla\v) + \langle\B(\u_{\infty}^{m_{k_j}}),\v\rangle + \alpha (\u_{\infty}^{m_{k_j}}, \v)+\beta\langle \mathcal{C}(\u_{\infty}^{m_{k_j}}),\v\rangle=\langle \P_{m_{k_j}}\f,\v\rangle, 
\end{align}
for all $\v\in\V\cap\widetilde{\L}^{r+1}$, along the subsequence $\{m_{k_j}\}$. In view of the weak convergence in \eqref{5121} and the fact that $\P_{m_{k_j}}\f \to \f$ in $\V^{\prime}$, we get
\begin{align}
	\mu(\nabla\u_{\infty}^{m_{k_j}},\nabla\v) + \alpha(\u_{\infty}^{m_{k_j}}, \v) - \langle \P_{m_{k_j}}\f,\v\rangle & \to \mu(\nabla\u_{\infty},\nabla\v) +  \alpha(\u_{\infty}, \v) -  \langle \f,\v\rangle,
\end{align}
for all $\v\in \V\cap\wi\L^{r+1}$. Further, using the strong convergence in \eqref{5123}, one can obtain (see e.g. \cite[page 114]{Te})
\begin{align*}
	\langle\B(\u_{\infty}^{m_{k_j}}),\phi\rangle  +\beta\langle \mathcal{C}(\u_{\infty}^{m_{k_j}}),\phi\rangle \to \langle\B(\u_{\infty}),\phi\rangle  +\beta\langle \mathcal{C}(\u_{\infty}),\phi\rangle
\end{align*}
for all $\phi\in C_c^{\infty}(\mathcal{O})$, and using the density of $C_c^{\infty}(\mathcal{O})$ in $\V\cap\wi\L^{r+1}$, we reach at
\begin{align*}
	\langle\B(\u_{\infty}^{m_{k_j}}),\v\rangle  +\beta\langle \mathcal{C}(\u_{\infty}^{m_{k_j}}),\v\rangle \to \langle\B(\u_{\infty}),\v\rangle  +\beta\langle \mathcal{C}(\u_{\infty}),\v\rangle
\end{align*}
for all $\v\in \V\cap\wi\L^{r+1}$.

Hence, we find that $\u_{\infty}$ is a solution to \eqref{3pp3} and $\u_{\infty}$ satisfies 
	\begin{align}\label{8.14}
	\mu 	\|\nabla\u_{\infty}\|_{\H}^2 + \alpha 	\|\u_{\infty}\|_{\H}^2 +2\beta\|\u_{\infty}\|_{\widetilde{\L}^{r+1}}^{r+1}\leq\frac{1}{\mu }\|\f\|_{\V'}^2,
	\end{align}
	for $r\geq 1$.

	(iii) Let us now establish the uniqueness for $r>3$.  We take $\u_{\infty}$ and $\v_{\infty}$ as two weak solutions of the system \eqref{3pp3}. We define $\w_{\infty}:=\u_{\infty}-\v_{\infty}$. Then $\w_{\infty}$ satisfies:
	\begin{align}\label{8p17}
	\mu (\nabla\w_{\infty},\nabla\v)+\langle\B(\u_{\infty})-\B(\v_{\infty}),\v\rangle + \alpha (\w_{\infty},\v) +\beta\langle\mathcal{C}(\u_{\infty})-\mathcal{C}(\v_{\infty}),\v\rangle=0,
	\end{align}
	for all $\v\in\V$. Taking $\v=\w_{\infty}$ in \eqref{8p17}, we obtain 
	\begin{align}\label{8p18}
	\mu \|\nabla\w_{\infty}\|_{\H}^2 + \alpha \|\w_{\infty}\|_{\H}^2 &=-\langle\B(\u_{\infty})-\B(\v_{\infty}),\w_{\infty}\rangle-\beta\langle\mathcal{C}(\u_{\infty})-\mathcal{C}(\v_{\infty}),\w_{\infty}\rangle\nonumber\\&\leq\frac{\mu}{2}\|\nabla\w_{\infty}\|_{\H}^2+\varrho\|\w_{\infty}\|_{\H}^2,
	\end{align}
	where we have used \eqref{2.27} and \eqref{2.30}. 
	Therefore, we  get 
	\begin{align}\label{8.19}
	\frac{\mu }{2}\|\nabla\w_{\infty}\|_{\H}^2+ (\alpha-\varrho)\|\w_{\infty}\|_{\H}^2\leq 0,
	\end{align}
and for $\alpha>\varrho$, where $\varrho$ is defined in \eqref{215}. On the other hand, by using the estimate \eqref{2.26}, the uniqueness also follows  for $\mu>\max\left\{\frac{1}{2\beta},\frac{1}{4\alpha}\right\}$. Thus, we have $\u_{\infty}=\v_{\infty}$, for a.e. $x\in\mathcal{O}$. For $r=3$ and $2\beta\mu\geq 1$, one can use the estimate \eqref{232} to obtain the uniqueness. 
	\end{proof} 
\begin{remark}
On bounded domains, due to Poincar\'e inequality \eqref{poin}, the condition given in \eqref{8pp5} becomes $\frac{\mu\lambda_1}{2}+\alpha>\varrho$. 
\end{remark}

\begin{remark}
One can get uniqueness for $r\geq 1$ also. For $2\leq d\leq 4$, we know that $\|\u\|_{\wi\L^4}\leq C\|\u\|_{\V}$, by using the Sobolev inequality.	We estimate $-\langle\B(\w_{\infty},\v_{\infty}),\w_{\infty}\rangle$ using H\"older's and Sobolev's inequalities as 
	\begin{align}\label{6.49}
	-\langle\B(\w_{\infty},\v_{\infty}),\w_{\infty}\rangle&\leq\|\w_{\infty}\|_{\wi\L^4}^2\|\nabla\v_{\infty}\|_{\H}\leq C\|\v_{\infty}\|_{\V}\|\nabla\w_{\infty}\|_{\H}^2.
	\end{align}
	Using \eqref{6.49} and \eqref{2.23} in \eqref{8p18}, we  get 
	\begin{align}\label{819}
	\left(\mu-C\|\v_{\infty}\|_{\V}\right)\|\nabla\w_{\infty}\|_{\H}^2\leq 0.
	\end{align}
	Since $\v_{\infty}$ satisfies \eqref{8.14}, from \eqref{819}, we obtain 
	\begin{align}\label{8.22}
	\left(\mu -\frac{C}{\mu\min\{\mu,\alpha\}}\|\f\|_{\V'}\right)\|\nabla\w_{\infty}\|_{\H}^2\leq 0.
	\end{align}
	If the smallness condition $\mu^2\min\{\mu,\alpha\}>C{\|\f\|_{\V'}}$ is satisfied, then we have $\u_{\infty}=\v_{\infty}$. 
\end{remark}

\subsection{Exponential stability} Let us now discuss the exponential stability of the stationary solution obtained in Theorem \ref{thm6.1}. We first describe the deterministic case. 
\begin{definition}
	A weak solution\footnote{For $r\geq 3,$ the existence and uniqueness of weak solutions can be obtained from \cite{SNA,CLF,KT2,Kinra+Mohan_2024_DCDS,Gautam+Mohan_2025}, etc} $\u^d(t)$ of the system  of the deterministic system 
	\begin{equation}\label{4p22}
	\left\{
	\begin{aligned}
	\frac{\d\u^d(t)}{\d t}+\mu \A\u^d(t)+\B(\u^d(t))+\alpha\u^d(t)+\beta\mathcal{C}(\u^d(t))&=\f,\\
	\u^d(0)&=\u_0,
	\end{aligned}
	\right.
	\end{equation}converges to $\u_{\infty}$ is \emph{exponentially stable in $\H$} if there exist a positive number $\kappa > 0$, such that
	\begin{align*}
	\|\u^d(t)-\u_{\infty}\|_{\H}\leq \|\u_0-\u_{\infty}\|_{\H}e^{-\kappa t},\ t\geq 0.
	\end{align*}
	In particular, if $\u_{\infty}$ is a stationary solution to the system (\ref{3pp2}), then $\u_{\infty}$ is called \emph{exponentially stable in $\H$} provided that any weak solution to the system (\ref{4p22}) converges to $\u_{\infty}$ at the same exponential rate $\kappa> 0$.
\end{definition}
\begin{theorem}
	Let $\u_{\infty}$ be the unique weak solution to the system (\ref{3pp2}). If $\u^d(\cdot)$ is any weak  solution to the system \eqref{4p22}
	with $\u_0\in\H$ and $\f\in\V'$ arbitrary, then we have $\u_{\infty}$ is exponentially stable in $\H$ and $
	\u^d(t)\to \u_{\infty}\ \text{ in }\ \H\text{ as }\  t\to\infty,
	$
	for
	$\alpha>\varrho$\footnote{For a bounded domain $\mathcal{O}$, this condition can be replaced with $\frac{\mu\lambda_1}{2}+\alpha>\varrho$.}, for $r>3$ and $\mu> \frac{1}{2\beta},$ for $r=3$. 
\end{theorem}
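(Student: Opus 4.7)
The plan is to set $\w(t) := \u(t) - \u_{\infty}$ and show that $\w$ decays exponentially in the $\H$-norm by a Gronwall argument driven by the monotonicity estimates of Theorems \ref{thm2.2} and \ref{thm2.3}. Subtracting the stationary equation \eqref{3pp2} from \eqref{4p22}, the forcing $\f$ cancels and $\w$ satisfies
\begin{equation*}
\frac{\d\w(t)}{\d t}+\bigl[\G(\u(t))-\G(\u_{\infty})\bigr]=0,\qquad \w(0)=\u_0-\u_{\infty},
\end{equation*}
where $\G(\u)=\mu\A\u+\B(\u)+\beta\mathcal{C}(\u)$. Since $\u,\u_{\infty}\in\V\cap\wi\L^{r+1}$ along the trajectory, the difference lies in the same class and its time derivative lies in $\V'+\wi\L^{(r+1)/r}$, so the energy identity
\begin{equation*}
\frac{1}{2}\frac{\d}{\d t}\|\w(t)\|_{\H}^2+\langle \G(\u(t))-\G(\u_{\infty}),\w(t)\rangle=0
\end{equation*}
is legitimate (this is exactly the deterministic analogue of the identity used in Step (4) of Theorem \ref{exis}).

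Next I would invoke the monotonicity inequality. For $r>3$, Theorem \ref{thm2.2} (applied with $\u$ and $\v=\u_{\infty}$, and reading off the intermediate step in its proof) yields
\begin{equation*}
\langle \G(\u)-\G(\u_{\infty}),\w\rangle\geq \frac{\mu}{2}\|\w\|_{\V}^2-\eta\|\w\|_{\H}^2,
\end{equation*}
with $\eta$ as in \eqref{215}. For $r=3$ under $2\beta\mu\geq 1$, Theorem \ref{thm2.3} gives the same bound with $\eta=0$ (in fact with $\mu$ in place of $\mu/2$). Combining this with the Poincar\'e inequality \eqref{poin}, namely $\|\w\|_{\V}^2\geq\uplambda_1\|\w\|_{\H}^2$, I obtain
\begin{equation*}
\frac{\d}{\d t}\|\w(t)\|_{\H}^2+\bigl(\mu\uplambda_1-2\eta\bigr)\|\w(t)\|_{\H}^2\leq 0.
\end{equation*}
Under the hypothesis $\mu>2\eta/\uplambda_1$ (resp.\ $\mu\geq 1/(2\beta)$ when $r=3$), the coefficient $\kappa:=\mu\uplambda_1-2\eta>0$, and Gronwall's inequality gives
\begin{equation*}
\|\u(t)-\u_{\infty}\|_{\H}\leq \|\u_0-\u_{\infty}\|_{\H}\,e^{-\kappa t/2},\qquad t\geq 0,
\end{equation*}
which is the desired exponential stability and in particular $\u(t)\to\u_{\infty}$ in $\H$ as $t\to\infty$.

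There is no serious obstacle; the only delicate point is justifying the energy identity for $\|\w\|_{\H}^2$, since $\w$ is only a weak solution and $\B(\u)-\B(\u_{\infty})$ lies in the sum space $\V'+\wi\L^{(r+1)/r}$. This is handled exactly as in the proof of Theorem \ref{exis} (the finite-dimensional projection/mollification argument of Step (4)), which applies verbatim in the deterministic setting with $\Phi\equiv 0$, or alternatively by subtracting the energy equality \eqref{237}-style identity for $\u$ from the corresponding identity for $\u_{\infty}$ and using the cross-term monotonicity. All remaining estimates are precisely the ones already collected in Section \ref{sec3}.
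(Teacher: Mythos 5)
For $r>3$ your argument is correct and is essentially the paper's own proof: the same decomposition $\w=\u-\u_{\infty}$, the same energy identity, the same use of the intermediate estimate from Theorem \ref{thm2.2} (i.e.\ \eqref{2.27} and \eqref{2.30}, giving $\langle\G(\u)-\G(\u_{\infty}),\w\rangle\geq\frac{\mu}{2}\|\w\|_{\V}^2-\eta\|\w\|_{\H}^2$), followed by Poincar\'e and Gronwall, yielding the decay rate $\mu\uplambda_1-2\eta$.

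The one step that would fail as written is your treatment of $r=3$. Theorem \ref{thm2.3} does \emph{not} give $\langle\G(\u)-\G(\u_{\infty}),\w\rangle\geq\mu\|\w\|_{\V}^2$ (nor $\frac{\mu}{2}\|\w\|_{\V}^2$) under $2\beta\mu\geq1$: in its proof the estimate \eqref{232} absorbs the \emph{entire} viscous term $\mu\|\w\|_{\V}^2$, and the conclusion is only $\langle\G(\u)-\G(\u_{\infty}),\w\rangle\geq\frac{1}{2}\left(\beta-\frac{1}{2\mu}\right)\|\u_{\infty}\w\|_{\H}^2\geq0$, which combined with the energy identity yields merely that $\|\w(t)\|_{\H}$ is non-increasing. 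To get exponential decay you must re-balance Young's inequality so that a positive fraction of $\|\w\|_{\V}^2$ survives, e.g.\ $|\langle\B(\w,\w),\u_{\infty}\rangle|\leq\frac{1}{2\beta}\|\w\|_{\V}^2+\frac{\beta}{2}\|\u_{\infty}\w\|_{\H}^2$, which together with \eqref{231} leaves $\left(\mu-\frac{1}{2\beta}\right)\|\w\|_{\V}^2$ and hence, via Poincar\'e, a decay rate $2\uplambda_1\left(\mu-\frac{1}{2\beta}\right)$; note this is strictly positive only when $2\beta\mu>1$, so the borderline case $\mu=\frac{1}{2\beta}$ gives no rate by this route (the paper is equally terse here, simply invoking \eqref{231}--\eqref{232}). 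Your remark on justifying the energy identity for the weak solution is fine and consistent with how the paper proceeds.
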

\begin{proof} 
	Let us define $\w^d=\u^d-\u_{\infty}$, so that $\w^d$ satisfies the following system: 
	\begin{equation}\label{8.26}
	\left\{
	\begin{aligned}
&	\frac{\d\w^d(t)}{\d t}+\mu  \A\w^d(t)+\beta (\B(\u^d(t))-\B(\u_{\infty})) + \alpha \w^d(t) +\beta (\mathcal{C}(\u^d(t))-\mathcal{C}(\u_{\infty}))=0, \\
&	\w^d(0)=\u_0-\u_{\infty}.
	\end{aligned}
	\right.
	\end{equation}
in $\V'+\wi\L^{\frac{r+1}{r}}$, for a.e. $t\in[0,T]$.	Taking the inner product with $\w(\cdot)$ to the first equation in $\eqref{8.26}_1$, we find 
	\begin{align}\label{8.27}
	&\frac{1}{2}\frac{\d}{\d t}\|\w^d(t)\|_{\H}^2+\mu \|\nabla\w^d(t)\|_{\H}^2 + \alpha \|\w^d(t)\|_{\H}^2 \nonumber\\&=-\beta\langle (\B(\u^d(t))-\B(\u_{\infty})),\w^d(t)\rangle -\beta\langle(\mathcal{C}(\u^d(t))-\mathcal{C}(\u_{\infty})),
	\w^d(t)\rangle\nonumber\\&\leq\frac{\mu }{2}\|\nabla\w^d(t)\|_{\H}^2+\varrho\|\w^d(t)\|_{\H}^2,
	\end{align}
	for $r>3$,	where we have used \eqref{2.27} and \eqref{2.30}. Thus, it is immediate that 
	\begin{align}
	& \frac{\d}{\d t}\|\w^d(t)\|_{\H}^2+\mu \|\nabla\w^d(t)\|_{\H}^2 + 2(\alpha-\varrho) \|\w^d(t)\|_{\H}^2 \leq 0. 
	\end{align}
	Hence, an application of variation of constants formula yields 
	\begin{align}
	\|\u^d(t)-\u_{\infty}\|_{\H}^2\leq e^{-\kappa t}\|\u_0-\u_{\infty}\|_{\H}^2, 
	\end{align}
	where $\kappa=(\alpha - \varrho)>0$, for $\alpha>\varrho$ and the exponential stability of $\u_{\infty}$ follows. For $r=3$ and $\mu\geq\frac{1}{2\beta}$ one can use the estimates \eqref{231} and \eqref{232} to get the required result. 
\end{proof} 
Now we discuss the exponential stability results in  the stochastic case. 
\begin{definition}
	A strong solution $\u(t)$ to the system (\ref{32}) converges to $\u_{\infty}\in\H$ is \emph{exponentially stable in mean square} if there exists a positive number $a > 0$, such that
	\begin{align*}
	\E\left[\|\u(t)-\u_{\infty}\|_{\H}^2\right]\leq \E\left[\|\u_0-\u_{\infty}\|_{\H}^2\right]e^{-at},\ t\geq 0.
	\end{align*}
	In particular, if $\u_{\infty}$ is a stationary solution to the system (\ref{3pp1}), then $\u_{\infty}$ is called \emph{exponentially stable in the mean square} provided that any strong solution to the system (\ref{32}) converges in $\H$ to $\u_{\infty}$ at the same exponential rate $a > 0$.
\end{definition}

\begin{definition}
	A strong solution $\u(t)$ to the system (\ref{32}) converges to $\u_{\infty} \in\H$ \emph{almost surely exponentially
		stable} if there exists $\upalpha >0$ such that
	$$\lim_{t\to+\infty}\frac{1}{t}\log\|\u(t)-\u_{\infty}\|_{\H}\leq -\upalpha,\ \mathbb{P}\text{-a.s.}$$
	In particular, if $\u_{\infty}$ is a stationary solution to the system (\ref{3pp1}), then $\u_{\infty}$ is called \emph{almost surely exponentially stable} provided that any strong solution to the system (\ref{32}) converges in $\H$ to $\u_{\infty}$ with the same constant $\upalpha> 0$.
\end{definition}

Let us now show the exponential stability of the stationary solutions to the system \eqref{3pp1} in the mean square as well as almost sure sense. The authors in \cite{LHGH2} obtained similar results, but with more regularity on the stationary solutions as well as the lower bound of $\mu$ depends on the stationary solutions. We relax those conditions in this work. The following results are true for all $r\geq  3$, and one has to take $2\beta\mu\geq  1$, for $r=3$. The system under our consideration is 
\begin{equation}\label{3.2}
\left\{
\begin{aligned}
\d\u(t)+\mu \A\u(t)+\B(\u(t))+ \alpha\u(t)+\beta\mathcal{C}(\u(t))&=\f+\Phi(t,\u(t))\d\W(t), \\
\u(0)&=\u_0,
\end{aligned}
\right.
\end{equation}
for a.e. $t\in[0,T]$, where $\f\in\V'$ and $\u_0\in\mathrm{L}^{4+\eta}(\Omega;\H)$ for some $\eta>0$.

\begin{theorem}\label{exp1}
	Let $\u_{\infty}$ be the unique stationary solution to the system  (\ref{3pp1}) and  $\Phi(t,\u_{\infty})=0,$ for all $t\geq 0$. Suppose that the conditions in Hypothesis \ref{hyp} are satisfied, then for $\theta=2\alpha-(2\varrho+L)>0,$ we have 
	\begin{align}\label{6.16}
&\E\left[\|\u(t)-\u_{\infty}\|_{\H}^2\right]\leq e^{-\theta t}\E\left[\|\u_0-\u_{\infty}\|_{\H}^2\right],
\end{align}
	provided\footnote{For a bounded domain $\mathcal{O}$, we can replace this condition with $\mu\lambda_1+ 2\alpha >2\varrho+L$, where $\lambda_1$ is the Poincar\'e constant.} 
	\begin{align}\label{6.3a}
	2\alpha > 2\varrho+L,
	\end{align}
	where $L$ is the constant appearing in Hypothesis \ref{hyp} (H.3) and $\varrho$ is defined in \eqref{215}.
\end{theorem}
\begin{proof}
	Let us define $\w:=\u-\u_{\infty}$ and $\theta=2\alpha-(2\varrho+L)>0$. Then $\w$ satisfies the following It\^o stochastic differential: 
	\begin{align}
	\left\{
	\begin{aligned}
	\d\w(t)+\mu \A\w(t)+(\B(\u(t))-\B(\u_{\infty}))&+ \alpha\w(t) +\beta(\mathcal{C}(\u(t))-\mathcal{C}(\u_{\infty}))\\&=(\Phi(t,\u(t))-\Phi(t,\u_{\infty}))\d\W(t), \\
	\w(0)&=\u_0-\u_{\infty},
	\end{aligned}
	\right.
	\end{align}
for a.e. $t\in[0,T]$,	since $\Phi(t,\u_{\infty})=0$, for all $t\in[0,T]$. Then $\w(\cdot)$ satisfies the following energy equality: 
	\begin{align}
e^{\theta t}	\|\w(t)\|_{\H}^2&=\|\w_0\|_{\H}^2-2\int_0^{t}e^{\theta s}\langle\B(\u(s))-\B(\u_{\infty}(s)),\w(s)\rangle\d s+\theta\int_0^{t}e^{\theta s}\|\w(s)\|_{\H}^2\d s
\nonumber\\ & \quad - 2\mu \int_0^{t}e^{\theta s}\|\nabla\w(s)\|_{\H}^2\d s - 2 \alpha\int_0^{t}e^{\theta s}\|\w(s)\|_{\H}^2\d s
\nonumber\\&\quad-2\int_0^{t}e^{\theta s}\langle\mathcal{C}(\u(s))-\mathcal{C}(\u_{\infty}(s)),\w(s)\rangle\d s+\int_0^{t}e^{\theta s}\|\wi\Phi(s)\|_{\mathcal{L}_{\Q}}^2\d s\nonumber\\&\quad+2\int_0^{t}e^{\theta s}(\widetilde{\Phi}(s)\d\W(s),\w(s)),
	\end{align}
where $\widetilde{\Phi}(\cdot)=\Phi(\cdot,\u(\cdot))-\Phi(\cdot,\u_{\infty})$. 	A calculation similar to \eqref{4.60} yields 
	\begin{align}\label{4.22}
e^{\theta t}\|\w(t)\|_{\H}^2&\leq\|\w_0\|_{\H}^2 +\left(\theta+2\varrho-2\alpha\right)\int_0^{t}e^{\theta s}\|\w(s)\|_{\H}^2\d s - \mu \int_0^{t}e^{\theta s}\|\nabla\w(s)\|_{\H}^2\d s 
\nonumber\\&\quad +\int_0^{t}e^{\theta s}\|\widetilde{\Phi}(s)\|^2_{\mathcal{L}_{\Q}}\d
s  +2\int_0^{t}e^{\theta s}(\widetilde{\Phi}(s)\d\W(s),\w(s)).
\end{align}
Taking expectation, and using Hypothesis \ref{hyp} (H.3) and the fact that the final term is a martingale, we find 
\begin{align}
	&e^{\theta t}\E\left[\|\w(t)\|_{\H}^2\right]\leq \E\left[\|\w_0\|_{\H}^2\right]+\left(\theta+2\varrho+L-2\alpha\right)\int_0^{t}e^{\theta s}\E\left[\|\w(s)\|_{\H}^2 \right]\d s.
\end{align}
Since $\mu$ satisfies \eqref{6.3a} implies $\theta=2\alpha-(2\varrho+L)>0$, we infer that  \eqref{6.16} is satisfied 	and hence $\u(t)$ converges to $\u_{\infty}$ exponentially in the mean square sense.
\end{proof}

\begin{theorem}\label{exp2}
	Let all conditions given in Theorem \ref{exp1} are satisfied and\footnote{For a bounded domain, this condition can be replaced with $\mu\lambda_1+2\alpha>2\varrho+7L$.} 
	\begin{align}
	2\alpha>2\varrho+7L.
	\end{align}
	 Then the strong solution $\u(\cdot)$ of the system (\ref{32}) converges to the stationary solution $\u_{\infty}$ of the system (\ref{3pp2}) almost surely exponentially stable.
\end{theorem}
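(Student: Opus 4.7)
The plan is to upgrade the mean-square decay of Theorem \ref{exp1} to almost sure exponential decay through a classical Chebyshev--Borel--Cantelli argument on unit-length intervals. Since $\mu > (2\eta + 3L)/\uplambda_1$ implies the hypothesis of Theorem \ref{exp1}, the process $\w(t) := \u(t) - \u_\infty$ satisfies the (restartable) mean-square bound
\begin{align*}
\E[\|\w(t)\|_{\H}^2] \leq e^{-\theta(t-s)}\,\E[\|\w(s)\|_{\H}^2], \qquad 0 \leq s \leq t,
\end{align*}
with $\theta := \mu\uplambda_1 - (2\eta + L) > 2L > 0$. The restarted version follows by reapplying Theorem \ref{exp1} on $[s,\infty)$ with initial datum $\w(s) \in \mathrm{L}^2(\Omega;\H)$.

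The core of the argument is to convert this into a pathwise supremum estimate on each interval $[n, n+1]$. Rerunning the energy calculation of Theorem \ref{exp1} starting at time $n$ and combining the monotonicity bounds \eqref{2.27} and \eqref{2.30} with the Poincar\'e inequality and Hypothesis \ref{hyp}(H.3), the It\^o formula for $\|\w(t)\|_{\H}^2$ yields
\begin{align*}
\|\w(t)\|_{\H}^2 + \theta \int_n^t \|\w(s)\|_{\H}^2\,\d s \leq \|\w(n)\|_{\H}^2 + 2\int_n^t (\widetilde{\Phi}(s)\,\d\W(s), \w(s)),
\end{align*}
$\mathbb{P}$-a.s. for $t \in [n, n+1]$, where $\widetilde{\Phi}(s) := \Phi(s, \u(s)) - \Phi(s, \u_\infty)$. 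Taking the supremum over $t \in [n, n+1]$ and then expectation, and applying Burkholder--Davis--Gundy together with Hypothesis \ref{hyp}(H.3) and Young's inequality to absorb half of $\E[\sup \|\w\|_{\H}^2]$ into the left-hand side, I expect to arrive at
\begin{align*}
\E\!\left[\sup_{t \in [n, n+1]} \|\w(t)\|_{\H}^2\right] \leq 2\,\E[\|\w(n)\|_{\H}^2] + 12L \int_n^{n+1} \E[\|\w(s)\|_{\H}^2]\,\d s \leq C_1\, e^{-\theta n}\,\E[\|\w_0\|_{\H}^2],
\end{align*}
where the restarted mean-square inequality controls the integral term and $C_1 = C_1(L, \theta)$ is independent of $n$.

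The conclusion follows by Chebyshev--Borel--Cantelli. Fixing any $\gamma \in (0, \theta)$, Markov's inequality gives
\begin{align*}
\mathbb{P}\!\left\{\sup_{t \in [n, n+1]} \|\w(t)\|_{\H}^2 > e^{-\gamma n}\right\} \leq C_1\, e^{-(\theta - \gamma)n}\,\E[\|\w_0\|_{\H}^2],
\end{align*}
which is summable in $n$. The Borel--Cantelli lemma then produces a $\mathbb{P}$-null set off of which, for every sufficiently large $n$, $\sup_{t \in [n, n+1]} \|\w(t)\|_{\H}^2 \leq e^{-\gamma n}$. For such $\omega$ and $t \geq N(\omega)$, writing $n = \lfloor t \rfloor$ gives
\begin{align*}
\frac{1}{t}\log\|\u(t) - \u_\infty\|_{\H} \leq -\frac{\gamma(t-1)}{2t},
\end{align*}
and letting $t \to \infty$ followed by $\gamma \uparrow \theta$ yields the claim with rate $\upalpha = \theta/2 > 0$. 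The critical case $r = 3$ with $2\beta\mu \geq 1$ is handled identically by invoking \eqref{231} and \eqref{232} in place of \eqref{2.27} and \eqref{2.30}. The principal technical step is the BDG-plus-Young manipulation producing the $n$-independent constant $C_1$; the additional margin of $2L$ in the standing hypothesis provides comfortable control of the constants throughout this absorption, ensuring that the restarted mean-square decay suffices to close the estimate.
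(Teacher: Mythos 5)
Your proposal is correct and follows essentially the same route as the paper: an interval-wise supremum estimate for $\|\u(t)-\u_{\infty}\|_{\H}^2$ obtained from the energy equality, the monotonicity bounds \eqref{2.27}--\eqref{2.30} and Burkholder--Davis--Gundy with the usual absorption, combined with the mean-square decay of Theorem \ref{exp1} and then Chebyshev plus Borel--Cantelli. The only (harmless) deviation is that you control the leftover term $C L\int_n^{n+1}\E\left[\|\w(s)\|_{\H}^2\right]\d s$ directly by the mean-square decay instead of keeping it on the left-hand side and using the extra margin in $\mu\uplambda_1>2\eta+3L$ to make its coefficient positive, as the paper does via its constant $\vartheta$.
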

\begin{proof}
	Let us take  $n=1,2,\ldots$ and $h>0$. Then the process $\|\u(\cdot)-\u_{\infty}\|_{\H}^2,$ for $t\geq nh$ satisfies:
	\begin{align}\label{6.17}
&	\|\u(t)-\u_{\infty}\|_{\H}^2+2\mu\int_{nh}^t\|\nabla(\u(s)-\u_{\infty})\|_{\H}^2\d s +2\alpha\int_{nh}^t\|\u(s)-\u_{\infty}\|_{\H}^2\d s \nonumber\\&=\|\u(nh)-\u_{\infty}\|_{\H}^2-2\int_{nh}^t\langle\B(\u(s))-\B(\u_{\infty}),\u(s)-\u_{\infty}\rangle\d s\nonumber\\&\quad-2\int_{nh}^t\langle\mathcal{C}(\u(s))-\mathcal{C}(\u_{\infty}),\u(s)-\u_{\infty}\rangle\d s\nonumber\\&\quad+2\int_{nh}^t\left((\Phi(s,\u(s))-\Phi(s,\u_{\infty}))\d\W(s),\u(s)-\u_{\infty}\right)\nonumber\\&\quad +\int_{nh}^t\|\Phi(s,\u(s))-\Phi(s,\u_{\infty})\|_{\mathcal{L}_{\Q}}^2\d s.
	\end{align}
	Taking the supremum from $nh$ to $(n+1)h$ and then taking expectation in (\ref{6.17}), we find 
	\begin{align}\label{6.18}
	&\E\left[\sup_{nh\leq t\leq  (n+1)h}\|\u(t)-\u_{\infty}\|_{\H}^2+\mu\int_{nh}^{(n+1)h}\|\nabla(\u(s)-\u_{\infty})\|_{\H}^2\d s  +2\alpha\int_{nh}^{(n+1)h}\|\u(s)-\u_{\infty}\|_{\H}^2\d s \right]\nonumber\\&\leq \E\left[\|\u(nh)-\u_{\infty}\|_{\H}^2\right] +2\varrho\E\left[\int_{nh}^{(n+1)h}\|\u(s)-\u_{\infty}\|_{\H}^2\d s\right]\nonumber\\&\quad +\E\left[\int_{nh}^{(n+1)h}\|\Phi(s,\u(s))-\Phi(s,\u_{\infty})\|_{\mathcal{L}_{\Q}}^2\d s\right]\nonumber\\&\quad +2\E\left[\sup_{nh\leq t\leq  (n+1)h}\left|\int_{nh}^t\left((\Phi(s,\u(s))-\Phi(s,\u_{\infty}))\d\W(s),\u(s)-\u_{\infty}\right)\right|\right],
	\end{align}
	where we have used (\ref{2.27}) and (\ref{2.30}). We estimate the final term in the right hand side of the inequality \eqref{6.18} using Burkholder-Davis-Gundy's, H\"older's and Young's inequalities as 
	\begin{align}\label{6.19}
	&2\E\left[\sup_{nh\leq t\leq  (n+1)h}\left|\int_{nh}^t\left((\Phi(s,\u(s))-\Phi(s,\u_{\infty}))\d\W(s),\u(s)-\u_{\infty}\right)\right|\right]\nonumber\\&\leq 2\sqrt{3}\E\left[\int_{nh}^{(n+1)h}\|\Phi(s,\u(s))-\Phi(s,\u_{\infty})\|_{\mathcal{L}_{\Q}}^2\|\u(s)-\u_{\infty}\|^2_{\H}\d s\right]^{1/2}\nonumber\\&\leq 2\sqrt{3}\E\left[\sup_{nh\leq s\leq  (n+1)h}\|\u(s)-\u_{\infty}\|_{\H}\left(\int_{nh}^{(n+1)h}\|\Phi(s,\u(s))-\Phi(s,\u_{\infty})\|_{\mathcal{L}_{\Q}}^2\d s\right)^{1/2}\right]\nonumber\\&\leq \frac{1}{2}\E\left[\sup_{nh\leq s\leq  (n+1)h}\|\u(s)-\u_{\infty}\|_{\H}^2\right]+6\E\left[\int_{nh}^{(n+1)h}\|\Phi(s,\u(s))-\Phi(s,\u_{\infty})\|_{\mathcal{L}_{\Q}}^2\d s\right].
	\end{align}
	Substituting (\ref{6.19}) in (\ref{6.18}), and then using Hypothesis \ref{hyp} (see (H.2)), we get 
	\begin{align}\label{6.21}
	&\E\left[\sup_{nh\leq t\leq  (n+1)h}\|\u(t)-\u_{\infty}\|_{\H}^2\right]+\vartheta\E\left[\int_{nh}^{(n+1)h}\|\u(s)-\u_{\infty}\|_{\H}^2\d s\right]\nonumber\\&\leq 2\E\left[\|\u(nh)-\u_{\infty}\|_{\H}^2\right],
	\end{align}
	where 
	$$\vartheta=2\left(2\alpha-(2\varrho+7L)\right)>0.$$
	Let us now use (\ref{6.16}) in (\ref{6.21}) to obtain 
	\begin{align}
	\E\left[\sup_{nh\leq t\leq  (n+1)h}\|\u(t)-\u_{\infty}\|_{\H}^2\right]\leq 2\E\left[\|\u_0-\u_{\infty}\|_{\H}^2\right]e^{-\theta nh}.
	\end{align}
	Using Chebychev's inequality, for $\epsilon\in(0,\theta)$, we also have 
	\begin{align}
	&\mathbb{P}\left\{\omega\in\Omega:\sup_{nh\leq t\leq  (n+1)h}\|\u(t)-\u_{\infty}\|_{\H}>e^{-\frac{1}{2}(\theta-\epsilon)nh}\right\}\nonumber\\&\leq e^{(\theta-\epsilon)nh}\E\left[\sup_{nh\leq t\leq  (n+1)h}\|\u(t)-\u_{\infty}\|_{\H}^2\right]\nonumber\\&\leq 2\E\left[\|\u_0-\u_{\infty}\|_{\H}^2\right]e^{-\epsilon nh},
	\end{align}
	and 
	\begin{align}
	&\sum_{n=1}^{\infty}\mathbb{P}\left\{\omega\in\Omega:\sup_{nh\leq t\leq  (n+1)h}\|\u(t)-\u_{\infty}\|_{\H}>e^{-\frac{1}{2}(\theta-\epsilon)nh}\right\}\nonumber\\&\quad\leq 2\E\left[\|\u_0-\u_{\infty}\|_{\H}^2\right]\frac{1}{e^{\epsilon h}-1}<+\infty.
	\end{align}
	Thus by using the Borel-Cantelli lemma, there is a finite integer $n_0(\omega)$ such that
	\begin{align}
	\sup_{nh\leq t\leq  (n+1)h}\|\u(t)-\u_{\infty}\|_{\H}\leq e^{-\frac{1}{2}(\theta-\epsilon)nh},\ \mathbb{P}\text{-a.s.,}
	\end{align}
	for all $n\geq n_0$, which completes the proof.
\end{proof}
\subsection{Stabilization by a multiplicative noise}
It is an interesting question  to ask  the exponential stability of  the stationary solution for small values of $\mu$ or $\alpha$. For 2D stochastic NSE, the authors in \cite{CTRJ1} obtained such a stabilization result with the system perturbed by an one dimensional Wiener process $\W(t)$ and for $\Phi(t,\u(t))=\sigma(\u(t)-\u_{\infty})$, where $\sigma$ is a real number. We apply a similar method to obtain  the stabilization of  CBF equations by using the same multipliative noise. Thus, we have the following stabilization result for SCBF by noise:
\begin{theorem}\label{thm4.7}
Let the system \eqref{32} be perturbed by an one dimensional Wiener process $\W(t)$  with $\Phi(t,\u(t))=\sigma(\u(t)-\u_{\infty})$, where $\sigma$ is a real number. Then, there exists $\Omega_0\subset\Omega$ with $\mathbb{P}(\Omega_0 )=0$, such that for all $\omega\not\in\Omega_0$, there exists $T(\omega) > 0$ such that any strong solution $\u(t)$ to the system (\ref{32}) satisfies
\begin{align}
\|\u(t)-\u_{\infty}\|_{\H}^2\leq \|\u_0-\u_{\infty}\|_{\H}^2e^{-\zeta t}, \ \text{ for any }\ t\geq T(\omega),
\end{align}
where\footnote{For a bounded domain, we can take $\zeta=\frac{1}{2}(\sigma^2+2\mu\lambda_1+ 2\alpha-2\varrho)>0$} $\zeta=\frac{1}{2}(\sigma^2+2\alpha-2\varrho)>0$. In particular, the exponential stability of sample paths  with probability one holds if $\zeta>0$.
\end{theorem}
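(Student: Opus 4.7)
The plan is to set $w(t) := u(t) - u_\infty$ and exploit the multiplicative structure $\Phi(t, u) = \sigma(u - u_\infty) = \sigma w$ via the classical logarithmic It\^o transformation, which converts the noise into a genuinely stabilizing contribution. Subtracting the stationary equation from the stochastic equation, the shifted process $w$ satisfies the It\^o SDE
\[
dw(t) + \bigl[\mu\A w(t) + \B(u(t)) - \B(u_\infty) + \beta(\mathcal{C}(u(t)) - \mathcal{C}(u_\infty))\bigr]\,dt = \sigma\, w(t)\, dW(t),
\]
with $w(0) = u_0 - u_\infty$. Applying the It\^o formula established in Theorem~\ref{exis} to $\|w(t)\|_{\H}^2$, together with the monotonicity estimate of Theorem~\ref{thm2.2} and Poincar\'e's inequality $\|w\|_{\V}^2 \geq \uplambda_1\|w\|_{\H}^2$, gives a pathwise bound of the form
\[
d\|w\|_{\H}^2 \leq (\sigma^2 + 2\eta - 2\mu\uplambda_1)\,\|w\|_{\H}^2\,dt + 2\sigma\,\|w\|_{\H}^2\,dW.
\]

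The crucial observation is that the multiplicative noise produces an extra negative contribution upon taking a logarithm: applying It\^o's formula to $\log\|w(t)\|_{\H}^2$, the quadratic variation $d[\|w\|_{\H}^2, \|w\|_{\H}^2]_t = 4\sigma^2\|w\|_{\H}^4\,dt$ enters via the $-\tfrac{1}{2}X^{-2}\,d[X,X]$ Hessian term, producing a $-2\sigma^2\,dt$ contribution. Combined with the drift, this yields
\[
d\log\|w\|_{\H}^2 \leq -(2\mu\uplambda_1 + \sigma^2 - 2\eta)\,dt + 2\sigma\,dW = -2\zeta\,dt + 2\sigma\,dW,
\]
so that pathwise integration gives the explicit representation
\[
\|w(t)\|_{\H}^2 \leq \|u_0 - u_\infty\|_{\H}^2\,\exp\bigl\{-2\zeta\,t + 2\sigma W(t)\bigr\}.
\]
The conclusion now follows from the strong law of large numbers for Brownian motion, $W(t)/t \to 0$ almost surely: there exists a $\mathbb{P}$-null set $\Omega_0$ such that for every $\omega \notin \Omega_0$ one can choose $T(\omega) > 0$ with $2\sigma W(t) \leq \zeta\,t$ for all $t \geq T(\omega)$, whence $\|u(t) - u_\infty\|_{\H}^2 \leq \|u_0 - u_\infty\|_{\H}^2\,e^{-\zeta t}$ for all such $t$, yielding almost-sure exponential stability whenever $\zeta > 0$.

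The principal technical obstacle is the rigorous justification of It\^o's formula for $\log\|w(t)\|_{\H}^2$: on the one hand $w$ takes values only in the intersection $\V \cap \widetilde{\L}^{r+1}$ rather than a single Hilbert space, and on the other hand $\log$ is singular at the origin. The cleanest way around both difficulties is to work with the auxiliary real-valued process $Z(t) := \|w(t)\|_{\H}^2\,e^{-2\sigma W(t)}$: the It\^o product rule, using only the already-established It\^o formula for $\|w\|_{\H}^2$ from Theorem~\ref{exis} and the cross variation $d[\|w\|_{\H}^2, e^{-2\sigma W}]_t = -4\sigma^2\|w\|_{\H}^2 e^{-2\sigma W}\,dt$, leads to the pathwise differential inequality $dZ \leq -2\zeta\,Z\,dt$, to which deterministic Gronwall's lemma applies directly and bypasses the logarithm entirely. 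Alternatively one may apply It\^o to the regularisation $\log(\|w\|_{\H}^2 + \varepsilon)$ and pass to the limit $\varepsilon \downarrow 0$, invoking pathwise uniqueness from Theorem~\ref{exis} to guarantee $\|w(t)\|_{\H} > 0$ for all $t \geq 0$ whenever $\|w_0\|_{\H} > 0$ (the case $w_0 = 0$ being trivial).
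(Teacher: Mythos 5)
Your proposal is correct and follows essentially the same route as the paper: subtract the stationary equation, apply It\^o's formula to $\log\|\u(t)-\u_{\infty}\|_{\H}^2$, absorb the drift with the monotonicity estimates \eqref{2.27}--\eqref{2.30} and Poincar\'e's inequality, observe that the quadratic-variation correction contributes the stabilizing $-\sigma^2 t$, and conclude via $\W(t)/t\to 0$ a.s., exactly as in the paper's choice of $T(\omega)$ with $2\sigma\W(t)\leq \zeta t$. Your additional justification of the logarithmic step through the auxiliary process $Z(t)=\|\w(t)\|_{\H}^2e^{-2\sigma\W(t)}$ (or the $\varepsilon$-regularisation) is a sound refinement of a point the paper's proof passes over silently, but it does not change the argument.
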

\begin{proof}
	We know that the process $\u(\cdot)-\u_{\infty}$ satisfies the following energy equality:
	\begin{align}
	&\|\u(t)-\u_{\infty}\|_{\H}^2+2\mu\int_{0}^t\|\nabla(\u(s)-\u_{\infty})\|_{\H}^2\d s +2\alpha\int_{0}^t\|\u(s)-\u_{\infty}\|_{\H}^2\d s \nonumber\\&=\|\u_0-\u_{\infty}\|_{\H}^2-2\int_{0}^t\langle\B(\u(s))-\B(\u_{\infty}),\u(s)-\u_{\infty}\rangle\d s\nonumber\\&\quad-2\int_{0}^t\langle\mathcal{C}(\u(s))-\mathcal{C}(\u_{\infty}),\u(s)-\u_{\infty}\rangle\d s\nonumber\\&\quad+2\int_{0}^t\left((\Phi(s,\u(s)))\d\W(s),\u(s)-\u_{\infty}\right) +\int_{0}^t\|\Phi(s,\u(s))\|_{\mathcal{L}_{\Q}}^2\d s.
	\end{align}
	Applying It\^o's formula to the process $\log\|\u(\cdot)-\u_{\infty}\|_{\H}^2$, we find 
	\begin{align}\label{4.35}
\log\|\u(t)-\u_{\infty}\|_{\H}^2&=\log\|\u_0-\u_{\infty}\|_{\H}^2-2\mu\int_0^t\frac{\|\nabla(\u(s)-\u_{\infty})\|_{\H}^2}{\|\u(s)-\u_{\infty}\|_{\H}^2}\d s 
- 2\alpha \nonumber\\&\quad -2\int_{0}^t\frac{\langle\B(\u(s))-\B(\u_{\infty}),\u(s)-\u_{\infty}\rangle}{\|\u(s)-\u_{\infty}\|_{\H}^2}\d s\nonumber\\&\quad-2\int_{0}^t\frac{\langle\mathcal{C}(\u(s))-\mathcal{C}(\u_{\infty}),\u(s)-\u_{\infty}\rangle}{\|\u(s)-\u_{\infty}\|_{\H}^2}\d s\nonumber\\&\quad+2\int_{0}^t\frac{\sigma\|\u(s)-\u_{\infty}\|_{\H}^2}{\|\u(s)-\u_{\infty}\|_{\H}^2}\d\W(s) +\int_{0}^t\frac{\sigma^2\|\u(s)-\u_{\infty}\|_{\H}^2}{\|\u(s)-\u_{\infty}\|_{\H}^2}\d s\nonumber\\&\quad-\frac{1}{2}\int_0^t\frac{4\sigma^2\|\u(s)-\u_{\infty}\|_{\H}^4}{\|\u(s)-\u_{\infty}\|_{\H}^4}\d s\nonumber\\&\leq\log\|\u_0-\u_{\infty}\|_{\H}^2+(-2\alpha+2\varrho-\sigma^2)t+2\sigma\W(t),
	\end{align}
where we have used (\ref{2.27}) and (\ref{2.30}).	We know that $\lim\limits_{t\to\infty}\frac{\W(t)}{t}=0$, $\mathbb{P}$-a.s. Thus, one can assure the existence of  a set $\Omega_0\subset\Omega$ with $\mathbb{P}(\Omega_0)=0$ such
	that for every $\omega\not\in\Omega_0$, there exists $T(\omega)>0$ such that for all $t\geq T(\omega)$, we have
	$$\frac{2\sigma\W(t)}{t}\leq\frac{1}{2}(2\alpha-2\varrho+\sigma^2).$$
	Hence from (\ref{4.35}), we finally have 
	\begin{align}
	\log\|\u(t)-\u_{\infty}\|_{\H}^2&\leq\log\|\u_0-\u_{\infty}\|_{\H}^2-\frac{1}{2}(\sigma^2+2\alpha-2\varrho)t,
	\end{align}
for any $t\geq T(\omega)$,	which completes the proof.
\end{proof}

\section{Invariant Measures and Ergodicity}\label{se7}\setcounter{equation}{0} In this section, we discuss the existence and uniqueness of invariant measures and ergodicity results for the stochastic convective Brinkman-Forchheimer equations \eqref{32} on bounded domains. The reason for restricting ourselves to bounded domains is that we are using compact Sobolev embedding to prove the result of this section. 
%{\color{red}A discussion on Poincar\'e domains is needed. What exactly stopping us to get the result in unbounded domains.}

  Let us first provide the definitions of invariant measures,  ergodic, strongly mixing and exponentially mixing invariant measures. Let $\X$ be a Polish space (compete separable metric space).  
\begin{definition}
	A probability measure $\upeta$ on	$(\X,\mathscr{B}(\X))$ is called \emph{an invariant		measure or a stationary measure} for a given transition probability	function $\mathrm{P}(t,\x,\d \y)$ if it satisfies
	$$\upeta(\A)=\int_{\X}\mathrm{P}(t,\x,\A)\d\upeta(\x),$$ for all $\A\in\mathscr{B}(\X)$ and
	$t>0$. Equivalently, if for all $\varphi\in \mathrm{C}_b(\X)$
	(the space of bounded continuous functions on $\X$), and all
	$t\geq 0$,
	$$\int_{\X}\varphi(\x)\d\upeta(\x)=\int_{\X}(\mathrm{P}_t\varphi)(\x)\d\upeta(\x),$$ where the Markov semigroup
	$(\mathrm{P}_t)_{t\geq 0}$ is defined by
	$$\mathrm{P}_t\varphi(\x)=\int_{\X}\varphi(\y)\mathrm{P}(t,\x,\d \y).$$
\end{definition}
\begin{definition}[{\cite[Theorem 3.2.4, Theorem 3.4.2]{GDJZ}, \cite{MSS}}]
	Let $\upeta$ be an invariant measure for $\left(\mathrm{P}_t\right)_{t\geq 0}.$  We say that the measure $\upeta$ is an \emph{ergodic measure,}  if for all $\varphi \in {\L}^2(\X;\upeta), $ we have  $$ \lim_{T\to +\infty}\frac{1}{T}\int_0^T (\mathrm{P}_t\varphi)(\x) \d t =\int_{\X}\varphi(\x) \d\upeta(\x) \ \text{ in } \ {\L}^2(\X;\upeta).$$ The invariant measure $\upeta$ for $\left(\mathrm{P}_t\right)_{t\geq 0}$ is called \emph{strongly mixing} if  for all $\varphi \in {\L}^2(\X;\upeta),$  we have $$\lim_{t\to+\infty}\mathrm{P}_t\varphi(\x) = \int_{\X}\varphi(\x) \d\upeta(\x)\ \text{ in }\ {\L}^2(\X;\upeta).$$ The invariant measure $\upeta$ for $\left(\mathrm{P}_t\right)_{t\geq 0}$ is called \emph{exponentially mixing}, if there exists a constant $k>0$ and a positive function $\Psi(\cdot)$ such that for any bounded Lipschitz function $\varphi$, all $t>0$ and all $\x\in\X$, $$\left|\mathrm{P}_t\varphi(\x)-\int_{\X}\varphi(\x)\d\upeta(\x)\right|\leq \Psi(\x)e^{-k t}\|\varphi\|_{\mathrm{Lip}},$$  where $\|\cdot\|_{\mathrm{Lip}}$ is the Lipschitz constant. 
\end{definition}

	Clearly exponentially mixing implies strongly mixing. \cite[Theorem 3.2.6]{GDJZ} states that if  $\upeta$ is the unique invariant measure for $(\mathrm{P}_t)_{t\geq 0}$, then  it is ergodic. The interested readers are referred to see \cite{GDJZ} for more details on the ergodicity for infinite dimensional systems. 

Let us now show that there exists a unique invariant measure for the Markovian transition probability associated to the  system (\ref{32}). Moreover, we establish that the invariant measure is ergodic and strongly mixing (in fact exponentially mixing). Let $\u(t,\u_0)$ denote the unique strong solution to the system (\ref{32}) with the initial condition $\u_0\in\H$. Let $(\mathrm{P}_t)_{t\geq 0}$ be the \emph{Markovian transition semigroup} in the space $\C_b(\H)$ associated to the system (\ref{32}) defined by
\begin{align}\label{mar}
\mathrm{P}_t\varphi(\u_0)=\E\left[\varphi(\u(t,\u_0))\right]=\int_{\H}\varphi(\y)\mathrm{P}(t,\u_0,\d
\y)=\int_{\H}\varphi(\y)\upeta_{t,\u_0}(\d \y),\;\varphi\in \C_b(\H),
\end{align}
where $\mathrm{P}(t,\u_0,\d \y)$ is the transition probability of $\u(t,\u_0)$ and $\upeta_{t,\u_0}$ is the law of $\u(t,\u_0)$. The semigroup $(\mathrm{P}_t)_{t\geq 0}$ is Feller, since the solution to \eqref{32} depends continuously on the initial data. From (\ref{mar}), we also have
\begin{align}\label{amr}
\mathrm{P}_t\varphi(\u_0)=\left<\varphi,\upeta_{t,\u_0}\right>=\left<\mathrm{P}_t\varphi,\upeta\right>,
\end{align}
where $\upeta$ is the law of the initial data $\u_0\in\H$. Thus from
(\ref{amr}), we have $\upeta_{t,\u_0}=\mathrm{P}_t^*\upeta$. We say that a
probability measure $\upeta$ on $\H$ is an \emph{invariant measure} if
\begin{align}
\mathrm{P}_t^*\upeta=\upeta,\textrm{ for all }\ t\geq 0.
\end{align}
That is, if a solution has law $\upeta$ at some time, then it has the same law for all later times. For such a solution, it can be shown by Markov property that for all $(t_1,\ldots,t_n)$ and $\tau>0$, $(\u(t_1+\tau,\u_0),\ldots,\u(t_n+\tau,\u_0))$ and $(\u(t_1,\u_0),\ldots,\u(t_n,\u_0))$ have the same law. Then, we say that the process $\u$ is \emph{stationary}. For more details, the interested readers are referred to see \cite{GDJZ,ADe}, etc.
\begin{theorem}\label{EIM}
	Let $\u_0\in\H$ be given. Then, under Hypothesis \ref{hyp}, for $\alpha>L$\footnote{One can obtain the same result for $\mu>\frac{L}{\lambda_1}$ also, see Remark \ref{IM-Remark} below.},	there exists an invariant measure for the system (\ref{32}) with support in $\V$.
\end{theorem}
\begin{proof}
	Let us use the energy equality obtained in \eqref{3.63} to find 
	\begin{align}\label{7.20}
	&	\|\u(t)\|_{\H}^2+2\mu \int_0^t\|\nabla\u(s)\|_{\H}^2\d s +2\alpha \int_0^t\|\u(s)\|_{\H}^2\d s +2\beta\int_0^t\|\u(s)\|_{\widetilde{\L}^{r+1}}^{r+1}\d s\nonumber\\&=\|{\u_0}\|_{\H}^2+\int_0^t\|\Phi(s,\u(s))\|_{\mathcal{L}_{\Q}}^2\d s+2\int_0^t(\Phi(s,\u(s))\d\W(s),\u(s)).
	\end{align}
In view of \eqref{Phi-Growth-condition} and \eqref{Phi-Lipschitz-condition}, we have 
\begin{align*}
	\int_0^t\|\Phi(s,\u(s))\|_{\mathcal{L}_{\Q}}^2\d s & \leq 2\int_0^t\|\Phi(s,\u(s)) - \Phi(s,\boldsymbol{0})  \|_{\mathcal{L}_{\Q}}^2\d s + 2 \int_0^t\|\Phi(s,\boldsymbol{0})  \|_{\mathcal{L}_{\Q}}^2\d s
	\nonumber\\ & \leq 2L\int_0^t\|\u(s)\|_{\H}^2\d s + 2 K t,
%	\nonumber\\ & \leq \frac{2L}{\lambda_1}\int_0^t\|\u(s)\|_{\V}^2\d s + 2 K t
\end{align*}
which implies form \eqref{7.20} that
\begin{align}\label{7.21}
	&	\|\u(t)\|_{\H}^2+2\mu \int_0^t\|\nabla\u(s)\|_{\H}^2\d s + 2\left(\alpha-L\right) \int_0^t\|\u(s)\|_{\H}^2\d s+2\beta\int_0^t\|\u(s)\|_{\widetilde{\L}^{r+1}}^{r+1}\d s\nonumber\\& \leq \|{\u_0}\|_{\H}^2+ 2K t +2\int_0^t(\Phi(s,\u(s))\d\W(s),\u(s)).
\end{align}
Taking expectation in (\ref{7.21}) and using the fact that the final term is a martingale having zero expectation, we obtain  
	\begin{align}\label{5.4}
	&	\E\left\{	\|\u(t)\|_{\H}^2+2\mu \int_0^t\|\nabla\u(s)\|_{\H}^2\d s + 2\left(\alpha-L\right) \int_0^t\|\u(s)\|_{\H}^2\d s+2\beta\int_0^t\|\u(s)\|_{\widetilde{\L}^{r+1}}^{r+1}\d s \right\}
	\nonumber\\ & \leq
	\E\left[\|\u_0\|_{\H}^2\right] + 2K t.
	\end{align}
	Thus, for $\alpha>L$, we have 
	\begin{align}\label{5.6}
	\frac{2\min\{\mu,(\alpha-L)\}}{t}\E\left[\int_0^{t}\|\u(s)\|_{\V}^2\d s\right]\leq
	\frac{1}{T_0}\|\u_0\|_{\H}^2 +2K, \text{ for all }t>T_0.
	\end{align}
Using Markov's inequality, we get
	\begin{align}\label{5.7}
	\lim_{R\to\infty}\sup_{T>T_0}\left[\frac{1}{T}\int_0^T\mathbb{P}\Big\{\|\u(t)\|_{\V}>R\Big\}\d
	t\right]&\leq
	\lim_{R\to\infty}\sup_{T>T_0}\frac{1}{R^2}\E\left[\frac{1}{T}\int_0^T\|\u(t)\|_{\V}^2\d
	t\right]=0.
	\end{align}
	Hence along with the estimate in (\ref{5.7}),  using the compactness of $\V$ in $\H$, it is clear by a standard argument that the sequence of probability measures $$\upeta_{t,\u_0}(\cdot)=\frac{1}{t}\int_0^t\Pi_{s,\u_0}(\cdot)\d s,\ \text{ where }\ \Pi_{t,\u_0}(\Lambda)=\mathbb{P}\left(\u(t,\u_0)\in\Lambda\right), \ \Lambda\in\mathscr{B}(\H),$$ is tight, that is, for each $\updelta>0$, there is a compact subset $K\subset\H$  such that $\upeta_t(K^c)\leq \updelta$, for all $t>0$, and so by the Krylov-Bogoliubov theorem (or by a  result of Chow and Khasminskii see \cite{CHKH}) $\upeta_{t_n,\u_0}\to\upeta,$ weakly for $n\to\infty$, and $\upeta$ results to be an invariant measure for the transition semigroup $(\mathrm{P}_t)_{t\geq 0}$ defined by 	$$\mathrm{P}_t\varphi(\u_0)=\E\left[\varphi(\u(t,\u_0))\right],$$ for all $\varphi\in\C_b(\H)$, where $\u(\cdot)$ is the unique strong solution to the system (\ref{32}) with the initial condition $\u_0\in\H$.
\end{proof}

\begin{remark}\label{IM-Remark}
	One can obtain inequality \eqref{5.7} for $\mu>\frac{L}{\lambda_1}$ in the following way, where $\lambda_1$ is the constant appearing in \eqref{poin}. In view of \eqref{Phi-Growth-condition}, \eqref{Phi-Lipschitz-condition} and Poincar\'e inequality, we have 
	\begin{align*}
		\int_0^t\|\Phi(s,\u(s))\|_{\mathcal{L}_{\Q}}^2\d s &  \leq 2L\int_0^t\|\u(s)\|_{\H}^2\d s + 2 K t
			  \leq \frac{2L}{\lambda_1}\int_0^t\|\nabla\u(s)\|_{\H}^2\d s + 2 K t,
	\end{align*}
	which implies form \eqref{7.20} that
	\begin{align}\label{7.21-1}
		&	\|\u(t)\|_{\H}^2 + 2\left(\mu-\frac{L}{\lambda_1}\right) \int_0^t\|\nabla\u(s)\|_{\H}^2\d s    \leq \|{\u_0}\|_{\H}^2+ 2K t +2\int_0^t(\Phi(s,\u(s))\d\W(s),\u(s)).
	\end{align}
	Taking expectation in (\ref{7.21-1}) and using the fact that the final term is a martingale having zero expectation, we obtain 
	\begin{align}\label{5.4-1}
		&	\E\left\{	\|\u(t)\|_{\H}^2+2\left(\mu-\frac{L}{\lambda_1}\right)  \int_0^t\|\nabla\u(s)\|_{\H}^2\d s  \right\}
		  \leq
		\E\left[\|\u_0\|_{\H}^2\right] + 2K t.
	\end{align}
	Thus,  for $\mu>\frac{L}{\lambda_1}$, we have 
	\begin{align}\label{5.6-1}
		\frac{2}{t}\left(\mu-\frac{L}{\lambda_1}\right)\E\left[\int_0^{t}\|\nabla\u(s)\|_{\H}^2\d s\right]\leq
		\frac{1}{T_0}\|\u_0\|_{\H}^2 +2K, \text{ for all }t>T_0.
	\end{align}
	Using Markov's inequality, we get
	\begin{align}\label{5.7-1}
		\lim_{R\to\infty}\sup_{T>T_0}\left[\frac{1}{T}\int_0^T\mathbb{P}\Big\{\|\u(t)\|_{\V}>R\Big\}\d
		t\right]
%		&\leq
%		\lim_{R\to\infty}\sup_{T>T_0}\frac{1}{R^2}\E\left[\frac{1}{T}\int_0^T\|\u(t)\|_{\V}^2\d
%		t\right]
%		\nonumber\\ 
		&\leq
	\left[1+\frac{1}{\lambda_1}\right]	\lim_{R\to\infty}\sup_{T>T_0}\frac{1}{R^2}\E\left[\frac{1}{T}\int_0^T\|\nabla\u(t)\|_{\H}^2\d
		t\right]=0.
	\end{align}
\end{remark}

Now we establish the uniqueness of invariant measure for the system (\ref{32}).  Similar results for 2D stochastic NSE is established in \cite{ADe} and for stochastic 2D Oldroyd models are obtained in \cite{MTM6}. The following result provides the exponential stability results for the system \eqref{32}. 

\begin{theorem}\label{exps1}
	Let $\u(\cdot)$ and $\v(\cdot)$ be two solutions of the system (\ref{32}) with $r>3$ and the initial data $\u_0,\v_0\in\H$, respectively. Then, for $\mu\lambda_1+ 2\alpha > 2\varrho+L$, we have 
	\begin{align}\label{513}
	\E\left[\|\u(t)-\v(t)\|_{\H}^2\right]\leq \|\u_0-\v_0\|_{\H}^2e^{-(\mu\lambda_1+2\alpha-(2\varrho+L))t},
	\end{align}
	where $\varrho$ is defined in \eqref{215}. 
\end{theorem}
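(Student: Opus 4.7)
The plan is to mimic the argument of Theorem~\ref{exp1}, replacing the stationary solution $\u_\infty$ with a second stochastic solution $\v(\cdot)$. Setting $\w(t) := \u(t) - \v(t)$ and $\widetilde\Phi(t) := \Phi(t,\u(t)) - \Phi(t,\v(t))$, the process $\w(\cdot)$ satisfies the It\^o stochastic differential
\begin{equation*}
\d\w(t) + \bigl[\mu \A\w(t) + \B(\u(t)) - \B(\v(t)) + \beta(\mathcal{C}(\u(t)) - \mathcal{C}(\v(t)))\bigr]\d t = \widetilde\Phi(t)\,\d\W(t),
\end{equation*}
with $\w(0) = \u_0 - \v_0$. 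Note that, unlike in Theorem~\ref{exp1}, we no longer have $\widetilde\Phi\equiv 0$; instead we rely on the Lipschitz condition (H.3) to control $\|\widetilde\Phi\|_{\mathcal{L}_{\Q}}^2 \le L\|\w\|_{\H}^2$.

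First, I would apply the infinite-dimensional It\^o formula to the process $e^{\theta t}\|\w(t)\|_{\H}^2$ with $\theta := \mu\uplambda_1 - (2\eta+L)>0$ (this is legitimate since $\u,\v$ are both strong solutions that satisfy the It\^o formula established in Theorem~\ref{exis}; the difference then also does). This yields
\begin{align*}
e^{\theta t}\|\w(t)\|_{\H}^2 &= \|\w(0)\|_{\H}^2 + \theta\int_0^t e^{\theta s}\|\w(s)\|_{\H}^2\,\d s - 2\mu\int_0^t e^{\theta s}\|\w(s)\|_{\V}^2\,\d s \\
&\quad - 2\int_0^t e^{\theta s}\langle \B(\u(s))-\B(\v(s)),\w(s)\rangle\,\d s \\
&\quad - 2\beta\int_0^t e^{\theta s}\langle \mathcal{C}(\u(s))-\mathcal{C}(\v(s)),\w(s)\rangle\,\d s \\
&\quad + \int_0^t e^{\theta s}\|\widetilde\Phi(s)\|_{\mathcal{L}_{\Q}}^2\,\d s + 2\int_0^t e^{\theta s}(\widetilde\Phi(s)\,\d\W(s),\w(s)).
\end{align*}

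Next, I would control the deterministic drift terms using the monotonicity bounds already proved: inequality~\eqref{2.27} gives $\beta\langle\mathcal{C}(\u)-\mathcal{C}(\v),\w\rangle \ge \tfrac{\beta}{2}\||\v|^{(r-1)/2}\w\|_{\H}^2$, while the estimate~\eqref{2.30} (with the roles of the two solutions playing those of $\u_1,\u_2$) yields $|\langle \B(\u)-\B(\v),\w\rangle| \le \tfrac{\mu}{2}\|\w\|_{\V}^2 + \tfrac{\beta}{2}\||\v|^{(r-1)/2}\w\|_{\H}^2 + \eta\|\w\|_{\H}^2$. The Forchheimer contributions cancel and, combined with Poincar\'e's inequality $\|\w\|_{\V}^2 \ge \uplambda_1\|\w\|_{\H}^2$, we are left with a bound of the form
\begin{equation*}
e^{\theta t}\|\w(t)\|_{\H}^2 \le \|\w_0\|_{\H}^2 + (\theta + 2\eta - \mu\uplambda_1)\int_0^t e^{\theta s}\|\w(s)\|_{\H}^2\,\d s + \int_0^t e^{\theta s}\|\widetilde\Phi(s)\|_{\mathcal{L}_{\Q}}^2\,\d s + M(t),
\end{equation*}
where $M(t)$ is the stochastic integral, which is a genuine martingale (since $\u,\v\in\mathrm{L}^2(\Omega;\mathrm{L}^\infty(0,T;\H))$ by Theorem~\ref{exis}).

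Finally, I would take expectations (killing $M(t)$), use Hypothesis~(H.3) to estimate $\E\|\widetilde\Phi(s)\|_{\mathcal{L}_{\Q}}^2 \le L\,\E\|\w(s)\|_{\H}^2$, and observe that by the definition of $\theta$ the coefficient $\theta + 2\eta + L - \mu\uplambda_1$ vanishes identically. This leaves $e^{\theta t}\E\|\w(t)\|_{\H}^2 \le \|\w_0\|_{\H}^2$, which is exactly~\eqref{513}. No Gronwall is strictly needed thanks to the clever choice of $\theta$. I do not anticipate any serious obstacle: the whole argument rests on the monotonicity estimates~\eqref{2.27}--\eqref{2.30} and the It\^o formula already established in Theorem~\ref{exis}; the only subtlety is justifying that $\w$ inherits the It\^o formula, which follows because both $\u$ and $\v$ separately do.
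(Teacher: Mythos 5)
Your proposal is correct and follows essentially the same route as the paper's proof: the energy equality for $\w=\u-\v$, the monotonicity estimates \eqref{2.27} and \eqref{2.30}, the Poincar\'e inequality, and Hypothesis \ref{hyp} (H.3). The only cosmetic difference is that you absorb the Gronwall step into the $e^{\theta t}$ weighting (as in Theorem \ref{exp1}), whereas the paper applies the energy equality to $\|\w(\cdot)\|_{\H}^2$ directly and then invokes Gronwall's inequality; the two are equivalent.
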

\begin{proof}
	Let us define $\w(t)=\u(t)-\v(t)$. Then, $\w(\cdot)$ satisfies the following energy equality: 
	\begin{align}\label{514}
	\|\w(t)\|_{\H}^2&=\|\w_0\|_{\H}^2-2\mu\int_0^t\|\nabla\w(s)\|_{\H}^2\d s-2\alpha\int_0^t\|\w(s)\|_{\H}^2\d s
	\nonumber\\ & \quad -2\beta\int_0^t\langle
	\mathcal{C}(\u(s))-\mathcal{C}(\v(s)),\w(s)\rangle\d s -2\int_0^t\left<\B(\u(s))-\B(\v(s)),\w(s)\right>\d s
	\nonumber\\&\quad +\int_0^{t}\|\wi\Phi(s)\|_{\mathcal{L}_{\Q}}^2\d s +2\int_0^{t}(\widetilde{\Phi}(s)\d\W(s),\w(s)),
	\end{align}
	where $\widetilde{\Phi}(\cdot)=\Phi(\cdot,\u(\cdot))-\Phi(\cdot,\v(\cdot))$. Taking expectation in \eqref{514} and then using the Poincar\'e inequality \eqref{poin}, Hypothesis (H.3), \eqref{2.27} and (\ref{2.30}), one can easily see that 
	\begin{align}\label{515}
	\E\left[	\|\w(t)\|_{\H}^2\right]\leq\|\w_0\|_{\H}^2-(\mu\lambda_1+2\alpha-(2\varrho+L))\int_0^t\E\left[\|\w(s)\|_{\H}^2\right]\d s,%+(2\varrho+L)\int_0^t\E\left[\|\w(s)\|_{\H}^2\right]\d s,
	\end{align}
	where $\varrho$ is defined in \eqref{215}. 	Thus, an application of the Gr\"onwall's inequality yields 
	\begin{align}
	\E\left[	\|\w(t)\|_{\H}^2\right]\leq \|\w_0\|_{\H}^2e^{-(\mu\lambda_1+2\alpha -(2\varrho+L))t},
	\end{align}
	and for $\mu\lambda_1+ 2\alpha > 2\varrho+L$, and we obtain the required result \eqref{513}. 
\end{proof}
\begin{remark}
For $2\beta\mu\geq  1$, the results obtained in Theorem \ref{exps1} can be established for $\mu>\frac{L}{\lambda_1}$, using the estimate \eqref{2.26}. Therefore, for $\mu>\max\left\{\frac{1}{2\beta},\frac{L}{\lambda_1}\right\}$ also, the exponential stability result given in Theorem \ref{exps1} holds. 
	\end{remark}
	Let us now establish the uniqueness of invariant measures for the system \eqref{32} obtained in Theorem \ref{EIM}. We prove the case of $r>3$ only and the case of $r=3$ follows similarly.

\begin{theorem}\label{UEIM}
	Let the conditions given in Theorem \ref{exps1} hold true and $\u_0\in\H$ be given. Then, for  $\mu\lambda_1+ 2\alpha > 2\varrho+L$, there is a unique invariant measure $\upeta$ to system (\ref{32}). The measure $\upeta$ is ergodic and strongly mixing, that is, 
	\begin{align}\label{6.9a}
		\lim_{t\to\infty}\mathrm{P}_t\varphi(\u_0)=\int_{\H}\varphi(\v_0)\d\upeta(\v_0), \ \upeta\text{-a.s., for all }\ \u_0\in\H\ \text{ and }\  \varphi\in\C_b(\H).
	\end{align} 
\end{theorem}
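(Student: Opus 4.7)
The plan is to deduce uniqueness, ergodicity and strong mixing all from the single contraction estimate already proved in Theorem \ref{exps1}, combined with the existence of an invariant measure from Theorem \ref{EIM}. The first step is to upgrade \eqref{513} to a contraction on the Markov semigroup itself: for any Lipschitz $\varphi$ on $\H$ with Lipschitz constant $\|\varphi\|_{\mathrm{Lip}}$, by Jensen's inequality and Theorem \ref{exps1},
\begin{align*}
\big|\mathrm{P}_t\varphi(\u_0)-\mathrm{P}_t\varphi(\v_0)\big|
&\leq \|\varphi\|_{\mathrm{Lip}}\,\E\big[\|\u(t,\u_0)-\v(t,\v_0)\|_{\H}\big]\\
&\leq \|\varphi\|_{\mathrm{Lip}}\,\|\u_0-\v_0\|_{\H}\,e^{-\theta t/2},
\end{align*}
where $\theta:=\mu\uplambda_1-(2\eta+L)>0$ under hypothesis \eqref{6.3a}.

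Next I would prove uniqueness. Let $\upeta_1,\upeta_2$ be two invariant measures. By the construction in Theorem \ref{EIM} together with the energy bound \eqref{5.4} and Fatou's lemma applied to the time averages, every invariant measure has a finite first moment $\int_{\H}\|\u_0\|_{\H}\d\upeta_i(\u_0)<\infty$. Invariance gives $\int \varphi\,\d\upeta_i=\int \mathrm{P}_t\varphi\,\d\upeta_i$, so for any bounded Lipschitz $\varphi$,
\begin{align*}
\Big|\!\int\!\varphi\,\d\upeta_1-\!\int\!\varphi\,\d\upeta_2\Big|
&=\Big|\!\int\!\!\int\!\big(\mathrm{P}_t\varphi(\u_0)-\mathrm{P}_t\varphi(\v_0)\big)\d\upeta_1(\u_0)\d\upeta_2(\v_0)\Big|\\
&\leq \|\varphi\|_{\mathrm{Lip}}\,e^{-\theta t/2}\!\int\!\!\int\!\|\u_0-\v_0\|_{\H}\,\d\upeta_1(\u_0)\d\upeta_2(\v_0)\to 0
\end{align*}
as $t\to\infty$. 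Since bounded Lipschitz functions are measure-determining on the Polish space $\H$, this yields $\upeta_1=\upeta_2$. Ergodicity of the unique invariant measure is then immediate from Theorem 3.2.6 of \cite{GDJZ}.

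For strong mixing \eqref{6.9a}, fix $\u_0\in\H$ and a Lipschitz $\varphi\in\C_b(\H)$. Using $\int\mathrm{P}_t\varphi\,\d\upeta=\int\varphi\,\d\upeta$ and the contraction above,
\begin{align*}
\Big|\mathrm{P}_t\varphi(\u_0)-\!\int\!\varphi\,\d\upeta\Big|
&=\Big|\!\int\!\big(\mathrm{P}_t\varphi(\u_0)-\mathrm{P}_t\varphi(\v_0)\big)\d\upeta(\v_0)\Big|\\
&\leq \|\varphi\|_{\mathrm{Lip}}\,e^{-\theta t/2}\!\int\!\|\u_0-\v_0\|_{\H}\,\d\upeta(\v_0)\longrightarrow 0.
\end{align*}
This already establishes exponential mixing against Lipschitz test functions, with rate function $\Psi(\u_0)=\|\u_0\|_{\H}+\int\|\v_0\|_{\H}\d\upeta(\v_0)$. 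To pass from Lipschitz $\varphi$ to arbitrary $\varphi\in\C_b(\H)$, I would invoke the fact that weak convergence of probability measures on the Polish space $\H$ is metrised by the bounded-Lipschitz distance: the above estimate shows $\mathrm{P}(t,\u_0,\cdot)\to\upeta$ in that metric, hence weakly, whence $\mathrm{P}_t\varphi(\u_0)\to\int\varphi\,\d\upeta$ for every $\varphi\in\C_b(\H)$ by the Portmanteau theorem.

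The only real subtlety is ensuring the finite first-moment condition $\int\|\v_0\|_{\H}\d\upeta(\v_0)<\infty$ needed to apply Fubini and to bound the convergence; this is where Theorem \ref{EIM} plays its second role, since the Krylov--Bogoliubov construction produces $\upeta$ as a weak limit of time averages of laws $\Pi_{s,\u_0}$ that satisfy uniform $\H$-moment bounds coming from \eqref{5.4}, and lower semicontinuity of $\v_0\mapsto\|\v_0\|_{\H}$ transfers this bound to $\upeta$. Everything else is a straightforward assembly of the contraction estimate with standard measure-theoretic facts.
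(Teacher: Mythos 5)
Your proof is correct and follows essentially the same route as the paper: the exponential stability estimate of Theorem \ref{exps1} is tested against Lipschitz functions, integrated against the invariant measure from Theorem \ref{EIM} to obtain strong (indeed exponential) mixing, uniqueness is obtained by integrating the same contraction against two invariant measures, and ergodicity is quoted from Theorem 3.2.6 of \cite{GDJZ}. The only difference is that you justify two points the paper merely asserts — the finite first moment $\int_{\H}\|\v_0\|_{\H}\,\d\upeta(\v_0)<\infty$ via the Krylov--Bogoliubov construction together with the energy bound \eqref{5.4}, and the passage from Lipschitz test functions to general $\varphi\in\C_b(\H)$ via the bounded-Lipschitz metric and Portmanteau rather than a bare density claim — which is a welcome tightening but not a different argument.
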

\begin{proof}
For $\varphi\in \text{Lip}(\H)$ (Lipschitz $\varphi$), since $\upeta$ is an invariant measure, we have 
	\begin{align}
	&	\left|\mathrm{P}_t\varphi(\u_0)-\int_{\H}\varphi(\v_0)\upeta(\d \v_0)\right|\nonumber\\&=	\left|\E[\varphi(\u(t,\u_0))]-\int_{\H}\mathrm{P}_t\varphi(\v_0)\upeta(\d \v_0)\right|\nonumber\\&=\left|\int_{\H}\E\left[\varphi(\u(t,\u_0))-\varphi(\u(t,\v_0))\right]\upeta(\d \v_0)\right|\nonumber\\&\leq L_{\varphi}\int_{\H}\E\left[\left\|\u(t,\u_0)-\u(t,\v_0)\right\|_{\H}\right]\upeta(\d \v_0)\nonumber\\&\leq  L_{\varphi} e^{-\frac{(\mu\lambda_1+2\alpha-(2\varrho+L))t}{2}}\int_{\H}\|\u_0-\v_0\|_{\H}\upeta(\d \v_0)\nonumber\\&\leq  L_{\varphi}e^{-\frac{(\mu\lambda_1+2\alpha-(2\varrho+L))t}{2}}\left(\|\u_0\|_{\H}+\int_{\H}\|\v_0\|_{\H}\upeta(\d \v_0)\right)\nonumber\\&\to 0\ \text{ as } \ t\to\infty,
	\end{align}
	since $\int_{\H}\|\v_0\|_{\H}\upeta(\d \v_0)<+\infty$. Hence, we deduce (\ref{6.9a}), for every $\varphi\in \C_b (\H)$, by the density of $\text{Lip}(\H)$ in $\C_b (\H)$. Note that, we have a stronger result that $\mathrm{P}_t\varphi(\u_0)$ converges exponentially fast to equilibrium, which is the exponential mixing property. This easily gives uniqueness of the invariant measure also. Indeed, if  $\wi\upeta$ is an another invariant measure, then
	\begin{align}
	&	\left|\int_{\H}\varphi(\u_0)\upeta(\d \u_0)-\int_{\H}\varphi(\v_0)\wi\upeta(\d \v_0)\right|\nonumber\\&= \left|\int_{\H}\mathrm{P}_t\varphi(\u_0)\upeta(\d \u_0)-\int_{\H}\mathrm{P}_t\varphi(\v_0)\wi\upeta(\d \v_0)\right|\nonumber\\&=\left|\int_{\H}\int_{\H}\left[\mathrm{P}_t\varphi(\u_0)-\mathrm{P}_t\varphi(\v_0)\right]\upeta(\d \u_0)\wi\upeta(\d \v_0)\right|\nonumber\\&\leq L_{\varphi}e^{-\frac{(\mu\lambda_1+2\alpha-(2\varrho+L))t}{2}}\int_{\H}\int_{\H}\|\u_0-\v_0\|_{\H}\upeta(\d \u_0)\wi\upeta(\d \v_0)\nonumber\\&\to 0\ \text{ as }\  t\to\infty.
	\end{align}
	By Theorem 3.2.6, \cite{GDJZ}, since $\upeta$ is the unique invariant measure for $(\mathrm{P}_t)_{t\geq 0}$, we know that it is ergodic. 
\end{proof}

\begin{remark}\label{rem-unbounded}
	Extending the work of \cite{Brzezniak+Ondrejat+Seidler_2017} on the $bw$-Feller property for stochastic nonlinear beam and wave equations, \cite[Theorem 6.5]{Brzezniak+Motyl+Ondrejat_2017} established the existence of an invariant measure for the stochastic 2D Navier-Stokes equations with multiplicative noise in unbounded domains. From \cite[Proposition 6.2]{Brzezniak+Motyl+Ondrejat_2017}, we infer that the semigroup $\mathrm{P}_t$ is $bw$-Feller, that is, if $\varphi : \H\to\mathbb{R}$  is a bounded sequentially weakly continuous function and $t > 0$, then $\mathrm{P}_t\varphi: \H\to\mathbb{R}$ is also a bounded sequentially weakly continuous function. In particular, if $\u_{0n}\xrightarrow{w}\u_0$ in $\H$,  then $\mathrm{P}_t\varphi(\u_{0n})\to\mathrm{P}_t\varphi(\u_0)$. Following \cite{Brzezniak+Motyl+Ondrejat_2017}, to prove the existence of an invariant measure for the system (\ref{32}) in unbounded domains, the montonicity method might not be helpful to prove $bw$-Feller property as montonicity method needs the strong convergence of the initial data. This will be addressed in future work.
\end{remark}

 \medskip\noindent
{\bf Data availability:}  Data sharing not applicable to this article as no datasets were generated or analysed during the current study.

 \medskip\noindent
{\bf Acknowledgments:} K. Kinra is funded by national funds through the FCT - Funda\c c\~ao para a Ci\^encia e a Tecnologia, I.P., under the scope of the project  UIDB/00297/2020 and UIDP/00297/2020 (Center for Mathematics and Applications). M. T. Mohan would  like to thank the Department of Science and Technology (DST) Science $\&$ Engineering Research Board (SERB), India for a MATRICS grant (MTR/2021/000066). The second author would also like to thank Prof. J. C. Robinson, University of Warwick for useful discussions.

\end{document}